\documentclass[11pt]{amsart}
\usepackage{amssymb}
\usepackage{amsthm}
\usepackage{amsmath}
\usepackage[dvips]{epsfig}
\usepackage{graphicx}
\usepackage{hyperref}
\usepackage{color}
\usepackage{url}
\usepackage{tikz}
\usepackage{caption}
\usepackage{subcaption}
\usepackage{epstopdf}
\usepackage[arrow, matrix, curve]{xy}
\usepackage{marginnote}

\usetikzlibrary{decorations.markings}

\makeatletter\@addtoreset {equation}{section}\makeatother

\setlength{\topmargin}{-0.1in}
\setlength{\textheight}{8.5in}
\setlength{\oddsidemargin}{-0.1in}
\setlength{\evensidemargin}{0.in}
\setlength{\textwidth}{6.5in}

\newtheorem{theorem}{Theorem}[section]

\newtheorem{lemma}{Lemma}[section]
\theoremstyle{remark}
\newtheorem{remark}{Remark}[section]
\theoremstyle{definition}

\theoremstyle{corollary}
\newtheorem{corollary}{Corollary}[section]

{\begin{trivlist} \item[]{\em Proof }}%
{\hspace*{\fill}$\rule{.3\baselineskip}{.35\baselineskip}$\end{trivlist}}

\begin{document}

\title[Convergence of Petviashvili's method]{\bf Convergence of Petviashvili's method near periodic waves
in the fractional Korteweg--de Vries equation}

\author{Uyen Le}
\address[U. Le]{Department of Mathematics and Statistics, McMaster University,
Hamilton, Ontario, Canada, L8S 4K1}
\email{leu@mcmaster.ca}

\author{Dmitry E. Pelinovsky}
\address[D. Pelinovsky]{Department of Mathematics and Statistics, McMaster University,
Hamilton, Ontario, Canada, L8S 4K1}
\email{dmpeli@math.mcmaster.ca}

\keywords{fractional Korteweg--de Vries equation, traveling periodic waves, Petviashvili's method, convergence analysis}

\date{\today}
\maketitle

\begin{abstract}
Petviashvili's method has been successfully used for approximating of solitary waves in nonlinear evolution equations. It was
discovered empirically that the method may fail for approximating of periodic waves. We consider the case study of the fractional
Korteweg--de Vries equation and explain divergence of Petviashvili's method from unstable
eigenvalues of the generalized eigenvalue problem. We also show that a simple modification
of the iterative method after the mean value shift results in the unconditional convergence
of Petviashvili's method. The results are illustrated numerically for
the classical Korteweg--de Vries and Benjamin--Ono equations.
\end{abstract}

% journal choices: (1) SIMA, (2) J Comp. Appl Math., (3) Nonlinear Analysis A

\tableofcontents

\section{Introduction}

A robust iterative method for approximating solitary waves was proposed by V.I. Petviashvili in 1976 \cite{Pet}.
Since then, it has become a popular numerical toolbox \cite{yang} with many recent generalizations in \cite{LY07,LY08}
and in \cite{AD14,AD15,AD16}.

In the context of Euler equations for water waves, Petviashvili's iterative method turns out
to be very useful for computing the solitary gravity waves \cite{DC1,DC2}.
However, it has been found empirically that the iterative algorithm does not converge for periodic waves,
hence suitable generalizations were proposed in the case of infinite \cite{DLK} and finite \cite{DC3} depths.
The work \cite{DLK} explores the generalization of Petviashvili's method for non-power nonlinearities proposed originally in
\cite{LY07}. The work of \cite{DC3} relies on an iteration-dependent shift of the field variable to enforce positivity of the periodic wave,
after which the classical Petviashvili method can be employed.

In a setting of fractional Korteweg-de Vries (KdV) and extended Boussinesq equations,
another modification of the Petviashvili method was proposed in \cite{AD17,D18},
where an iteration-independent shift of the field variable was computed from the underlying equation.
Numerical results in \cite{AD17} illustrated convergence of the Petviashvili method
for the periodic waves after the shift.

The main purpose of this work is to explain analytically the failure of the classical Petviashvili method
for approximating of periodic waves and to prove convergence of the same method
after a suitable shift of the field variable. We consider the toy problem
given by the fractional KdV equation with a quadratic nonlinearity,
which is a simplified model arising from the Euler equations in the shallow limit  \cite{BBM}.
The fractional KdV equation is taken in the normalized form
\begin{equation}
\label{kdv}
u_t + 2 u u_x + (D_{\alpha} u)_x = 0,
\end{equation}
where $D_{\alpha}$ is a fractional derivative operator defined by its Fourier symbol
$$
\widehat{D_{\alpha} u}(\xi) = -|\xi|^{\alpha} \hat{u}(\xi), \quad \xi \in \mathbb{R}.
$$
The case $\alpha = 2$ corresponds to the classical KdV equation, whereas the case $\alpha = 1$
corresponds to the integrable BO (Benjamin--Ono) equation. Henceforth, we assume that $\alpha > 0$.

Global existence in the fractional KdV equation (\ref{kdv}) for the initial data in
the energy space $H^{\alpha/2}$ was proven in \cite{SautPilod} for $\alpha > 1/2$
and for $\alpha = 1/2$ and small data. More recently,
local existence for the initial data in $H^s$ was shown for $\alpha > 0$ and $s > 3/2 - 5 \alpha/4$
in \cite{Molinet}.

Existence and stability of periodic waves in the fractional KdV equation (\ref{kdv}) were analyzed
by using perturbative \cite{J} and variational \cite{C,BC,JH1} methods.
For the classical KdV and BO equations, stability of periodic waves was also
proven in \cite{NataliPava}. These results, especially perturbation expansions
in the limit of small wave amplitudes, are also useful
in our analysis of convergence of iterative methods near the periodic waves.

Periodic traveling waves are solutions of the fractional KdV equation (\ref{kdv}) in the form
$u(x,t) = \psi(x - ct)$, where $\psi$ is a periodic function in its argument
and $c$ is the speed parameter for the wave travelling to the right.
Without loss of generality, due to scaling and translation invariance
of the fractional KdV equation (\ref{kdv}), we scale the period of $\psi$ to $2\pi$ and translate $\psi$
to become an even function of its argument. Due to the Galilean invariance,
integration of the nonlinear equation for $\psi$
is performed with zero integration constant. All together,
the wave profile $\psi$ is a $2\pi$-periodic even solution
to the following boundary-value problem:
\begin{equation}
\label{ode-new}
(c - D_\alpha ) \psi =  \psi^2, \quad \psi \in H^{\alpha}_{\rm per}(-\pi,\pi).
\end{equation}
We say that the periodic wave has a {\em single-lobe} profile if there exist only one maximum and minimum
of $\psi$ on the period. For uniqueness of solutions, we place the maximum of $\psi$ at $x = 0$
and the minimum of $\psi$ at $x = \pm \pi$.

In addition to the waves travelling to the right, the fractional KdV equation (\ref{kdv}) has
also periodic traveling waves in the form $u(x,t) = \phi(x + ct)$, where $\phi$ is a $2\pi$-periodic even solution
to the following boundary-value problem:
\begin{equation}
\label{ode}
(c + D_{\alpha}) \phi + \phi^2 = 0, \quad \phi \in H^{\alpha}_{\rm per}(-\pi,\pi).
\end{equation}
A very simple formula connects the right-propagating waves with the left-propagating waves:
\begin{equation}
\label{phi-psi}
\phi(x) = -c + \psi(x).
\end{equation}
The wave profile $\phi$ is a solution to the boundary-value problem (\ref{ode}) with some $c > 0$
if and only if $\psi$ is a solution to the boundary-value problem (\ref{ode-new}) with the same $c > 0$.
Section \ref{sec-2} collects together some results on existence of solutions
to the boundary-value problems (\ref{ode-new}) and (\ref{ode}).

\begin{remark}
Although most of the previous works (see, e.g., \cite{NataliPava,C,BC,JH1,J}) are devoted
the right-propagating waves with profile $\psi$, there are no apriori reasons to prefer these
waves over the left-propagating waves with profile $\phi$. Perturbative expansions
for waves of small amplitudes are more easily developed for the left-propagating waves
with profile $\phi$ since they arise in the local bifurcation theory from linearization of the
zero equilibrium (see Theorem \ref{lemma-small} below). On the other hand, the proof of positivity
of the wave profile $\psi$ is developed easier from the boundary-value problem (\ref{ode-new}) (see Theorem \ref{lemma-positive} below).
\end{remark}

Let us now explain how Petviashvili's iterative methods can be employed in order to approximate solutions
to the boundary-value problems (\ref{ode-new}) and (\ref{ode}) numerically.
In fact, the most interesting interplay between convergent and divergent iterations
arises in the context of the boundary-value problem (\ref{ode}).

Suppose that $\phi \in H^{\alpha}_{\rm per}(-\pi,\pi)$ is a solution to the boundary-value problem (\ref{ode})
for some $c > 0$. For uniqueness of solutions, we always denote by $\phi$ the {\em single-lobe periodic solution}
in the sense of the definition above. The classical Petviashvili method for approximating of $\phi$ is defined as follows.

Consider $\mathcal{L}_{c,\alpha} := -c - D_{\alpha}$ as a linear operator in $L^2_{\rm per}(-\pi,\pi)$ with
the domain $H^{\alpha}_{\rm per}(-\pi,\pi)$ and define the Petviashvili quotient:
\begin{equation}
\label{rule}
M(w) := \frac{\langle \mathcal{L}_{c,\alpha} w, w \rangle}{\langle w^2,w \rangle}, \quad w \in H^{\alpha}_{\rm per}(-\pi,\pi),
\end{equation}
where $\langle \cdot, \cdot \rangle$ denotes the standard inner product in $L^2_{\rm per}(-\pi,\pi)$.
For $c \notin \{ 1, 2^{\alpha}, 3^{\alpha}, \dots \}$, for which the linear operator
$\mathcal{L}_{c,\alpha} : H^{\alpha}_{\rm per}(-\pi,\pi) \to L^2_{\rm per}(-\pi,\pi)$ is invertible,
and for any suitable initial guess $w_0 \in H^{\alpha}_{\rm per}(-\pi,\pi)$,
define a sequence $\{ w_n \}_{n \in \mathbb{N}}$ in $H^{\alpha}_{\rm per}(-\pi,\pi)$ by the iterative rule:
\begin{equation}
\label{T}
w_{n+1} = T_{c,\alpha}(w_n) := [M(w_n)]^2 \mathcal{L}_{c,\alpha}^{-1} (w_n^2), \quad n \in \mathbb{N}.
\end{equation}
Here we have selected the quadratic exponent of $M(w_n)$ so that $T_{c,\alpha}(w)$ is a homogeneous power function in $w$
of degree {\em zero}. This ensures the fastest convergence rate of the iterative method (\ref{T})
near a solution of the nonlinear equation (\ref{ode}) \cite{PelSt}.

As is well understood since the first proof of convergence in \cite{PelSt} (see also follow-up works in \cite{AD15,AD16,ChugPel-KdV,DS,LY07}),
convergence of the iterative method is analyzed from contraction of the linearized operator
at the fixed point $\phi \in H^{\alpha}_{\rm per}(-\pi,\pi)$
of $T_{c,\alpha}$. By Lemma 1.2 in \cite{PelSt}, the set of fixed points of $T_{c,\alpha}$ coincides with the set of
solutions to the boundary-value problem (\ref{ode}). Contraction of the corresponding linearized operator
is defined by the spectrum of the generalized eigenvalue problem
\begin{equation}
\label{gEp}
\mathcal{H}_{c,\alpha} v = \lambda \mathcal{L}_{c,\alpha} v, \quad v \in H^{\alpha}_{\rm per}(-\pi,\pi),
\end{equation}
where
\begin{equation}
\label{Jacobian}
\mathcal{H}_{c,\alpha} := -c - D_{\alpha} - 2 \phi
\end{equation}
is the associated linearized operator in $L^2_{\rm per}(-\pi,\pi)$ with the domain $H^{\alpha}_{\rm per}(-\pi,\pi)$.
Note that $\mathcal{H}_{c,\alpha}$ is the Jacobian operator for the boundary-value
problem (\ref{ode}), which also plays the crucial role in the stability analysis
of the travelling periodic waves \cite{NataliPava,JH1,J}.

Section \ref{sec-3} presents the main result on convergence of the iterative method (\ref{T}).
Here and in what follows, the following critical values of $\alpha$ are important:
\begin{equation}
\label{alpha}
\alpha_0 := \frac{\log 3}{\log 2} - 1, \qquad
\alpha_1 := \frac{\log 5}{\log 2} - 1,
\end{equation}
where $1/2 < \alpha_0 < 1 < \alpha_1 < 2$. The proof of the main result
is achieved by the count of unstable eigenvalues
in the generalized eigenvalue problem (\ref{gEp}) and by perturbative arguments.

\begin{theorem}
\label{theorem-main-1}
For every $c > 1$ and $\alpha \in (\alpha_0,2]$, there exists a unique
single-lobe solution $\phi \in H^{\alpha}_{\rm per}(-\pi,\pi)$ to the boundary-value
problem (\ref{ode}). If $c \gtrsim 1$, this unique solution is an unstable fixed point
of the iterative method (\ref{T}) for $\alpha \in (\alpha_0,\alpha_1)$ and
an asymptotically stable fixed point (up to a translation) for $\alpha \in (\alpha_1,2]$.
If $c > 2^{\alpha}$, this unique solution is an unstable fixed-point of the iterative method (\ref{T})
for $\alpha \in (\alpha_0,2]$.
\end{theorem}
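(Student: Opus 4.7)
The plan is to reduce Petviashvili-stability to the spectrum of the generalized eigenvalue problem (\ref{gEp}) and then count dangerous generalized eigenvalues separately in the small-amplitude and high-speed regimes. For existence and uniqueness of the single-lobe profile $\phi$, I appeal to Section \ref{sec-2}: local bifurcation from $c = 1$ yields the expansion $\phi(x) = -a\cos x + a^2\phi_2(x) + O(a^3)$ with $c = 1 + c_2 a^2 + O(a^4)$, where $c_2 = (2^{\alpha+1} - 3)/[2(2^\alpha - 1)]$ and $\phi_2(x) = -\tfrac{1}{2} + \tfrac{\cos 2x}{2(2^\alpha - 1)}$. The requirement $c_2 > 0$ is equivalent to $\alpha > \alpha_0$, and global continuation combined with positivity of $\psi = \phi + c$ (Theorem \ref{lemma-positive}) propagates the single-lobe profile to every $c > 1$.

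To reduce the iteration spectrum, I differentiate $T_{c,\alpha}(w) = [M(w)]^2 \mathcal{L}_{c,\alpha}^{-1}(w^2)$ at $\phi$ (using $M(\phi) = 1$ and $\mathcal{L}_{c,\alpha}\phi = \phi^2$) to obtain
\begin{equation*}
T'_{c,\alpha}(\phi)\,h \;=\; 2\,\mathcal{L}_{c,\alpha}^{-1}(\phi h) \;-\; \frac{2\langle \phi^2, h\rangle}{\langle \phi^2, \phi\rangle}\,\phi.
\end{equation*}
A short computation then shows that for every eigenvalue $\lambda \neq -1$ of (\ref{gEp}) with eigenvector $v$, the shifted vector $h = v - \tfrac{2\langle \phi^2, v\rangle}{(1-\lambda)\langle \phi^2, \phi\rangle}\,\phi$ is an eigenvector of $T'_{c,\alpha}(\phi)$ with eigenvalue $\mu = 1 - \lambda$; the scaling mode $\lambda = -1$ (eigenvector $\phi$) is annihilated by the $M^2$ factor, and the translation mode $\phi' \in \ker \mathcal{H}_{c,\alpha}$ corresponds to $\lambda = 0$ (hence the neutral $\mu = 1$). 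Asymptotic stability of $\phi$ up to translation is therefore equivalent to placing every remaining eigenvalue of (\ref{gEp}) inside $(0, 2)$, while any $\lambda \in (-\infty, 0) \cup (2, \infty)$ produces linear instability.

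For $c \gtrsim 1$, the spectrum $\{n^\alpha - c\}_{n \in \mathbb{Z}}$ of $\mathcal{L}_{c,\alpha}$ contains a simple negative eigenvalue near $-1$ at $n = 0$ and a double near-zero eigenvalue of order $-a^2$ at $n = \pm 1$; away from the $n = \pm 1$ modes all generalized eigenvalues perturb around $1$ and are harmless, so the count reduces to a $2 \times 2$ problem on $\mathrm{span}(\cos x, \sin x)$. Lyapunov--Schmidt reduction on this subspace, with explicit coupling to the $n = 0$ and $n = \pm 2$ Fourier modes via $\phi_2$, produces two small eigenvalues of $\mathcal{H}_{c,\alpha}$: one is exactly $0$ (translation), and the second is $\kappa a^2 + O(a^3)$ with a computable coefficient $\kappa$ whose sign agrees with the sign of $2^{\alpha+1} - 5$, vanishing precisely at $\alpha = \alpha_1$. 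Paired against the $O(-a^2)$ eigenvalue of $\mathcal{L}_{c,\alpha}$ on the same mode, this yields a generalized eigenvalue whose sign is opposite to $\kappa$, and one verifies directly that it lies in $(0, 2)$ for $\alpha \in (\alpha_1, 2]$ but is strictly negative for $\alpha \in (\alpha_0, \alpha_1)$, delivering the second claim.

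For $c > 2^\alpha$, the operator $\mathcal{L}_{c,\alpha}$ acquires two additional negative eigenvalues at $n = \pm 2$, so $n(\mathcal{L}_{c,\alpha}) \geq 5$. An index-count argument based on Sylvester's law of inertia, combined with the identity $\mathcal{H}_{c,\alpha}\partial_c \phi = \phi$ (obtained by differentiating (\ref{ode}) in $c$), forces at least one strictly negative eigenvalue of (\ref{gEp}) for every $\alpha \in (\alpha_0, 2]$, producing the unstable multiplier $\mu = 1 - \lambda > 1$ required for the final claim. The hardest step is the computation of the coefficient $\kappa$ in the non-translation small eigenvalue of $\mathcal{H}_{c,\alpha}$ and the verification that its sign change occurs exactly at $\alpha_1 = \log_2 5 - 1$; the Schur-complement contributions from both the $n = 0$ and $n = \pm 2$ modes must be tracked carefully for the critical quantity $2^{\alpha+1} - 5$ to appear.
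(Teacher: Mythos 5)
Your overall strategy is the paper's: linearize $T_{c,\alpha}$ at $\phi$, observe that the multipliers are $\mu = 1-\lambda$ for eigenvalues $\lambda$ of the generalized problem (\ref{gEp}) after removing the scaling mode $\lambda=-1$ and the translation mode $\lambda=0$, and then count eigenvalues outside $(0,2)$ in the two regimes. Your linearization formula and the eigenvector correspondence $h = v - \tfrac{2\langle\phi^2,v\rangle}{(1-\lambda)\langle\phi^2,\phi\rangle}\phi$ are correct, and the $c>2^{\alpha}$ part (negative-index count forcing a negative generalized eigenvalue) matches the paper's Lemma \ref{lemma-4} in spirit, modulo the need to invoke the isospectrality of $\mathcal{H}_{c,\alpha}$ (Lemma \ref{lemma-11}) so that $n(\mathcal{H}_{c,\alpha})=1$ persists for all $c>1$.

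The gap is in the small-amplitude step, and it is a real one. First, there is no small eigenvalue of $\mathcal{H}_{c,\alpha}$ whose sign changes at $\alpha_1$: the double zero eigenvalue of $\mathcal{H}_{c=1,\alpha}$ splits into $0$ (odd) and $2c_2a^2+\mathcal{O}(a^3)$ (even), both controlled by $\alpha_0$, not $\alpha_1$ (Lemma \ref{lemma-2}). Second, the ``pairing'' of an eigenvalue of $\mathcal{H}_{c,\alpha}$ against an eigenvalue of $\mathcal{L}_{c,\alpha}$ on the same Fourier mode is not a valid reduction of the generalized problem when \emph{both} operators vanish on $\mathrm{span}\{e^{ix},e^{-ix}\}$ at $c=1$: the solvability condition at $\mathcal{O}(a^2)$ is \emph{quadratic} in $\lambda$ because the first corrector $v_1$ already carries a factor $1/(\lambda-1)$, so each one-dimensional parity subspace produces \emph{two} generalized eigenvalues — $\{-1,2\}$ (even) and $\{0,\tfrac{2^{\alpha+1}-5}{2^{\alpha+1}-3}\}$ (odd), four in total (Lemma \ref{lemma-3}). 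Your count misses the eigenvalue $\lambda_2\to 2$ entirely; its multiplier $\mu=1-\lambda_2\to -1$ sits on the boundary of the unit disk, so the convergence claim for $\alpha\in(\alpha_1,2]$ cannot be concluded without showing $\lambda_2<2$ strictly for $c\gtrsim 1$, which requires the fourth-order computation of $\Lambda_2$ in (\ref{Lambda1})--(\ref{Lambda2}). Finally, your sign bookkeeping is internally inconsistent: with $\mathrm{sign}(\kappa)=\mathrm{sign}(2^{\alpha+1}-5)$ and the generalized eigenvalue of sign opposite to $\kappa$, you would obtain a negative eigenvalue precisely for $\alpha>\alpha_1$, contradicting your own (correct) final statement; the actual odd-subspace eigenvalue is $\lambda_1=\tfrac{2^{\alpha+1}-5}{2^{\alpha+1}-3}+\mathcal{O}(a^2)$, negative for $\alpha\in(\alpha_0,\alpha_1)$ and in $(0,1)$ for $\alpha>\alpha_1$. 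A lesser omission: passing from the spectral condition to nonlinear convergence up to translation requires handling the neutral mode $\mu=1$ by a modulation decomposition as in Theorem \ref{theorem-convergence-classic}, which your sketch does not address.
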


\begin{remark}
Notation $c \gtrsim 1$ implies that there is $c_0 > 1$ near $1$ such that
the statement holds for every $c \in (1,c_0)$. The
unique solution to the boundary-value problem (\ref{ode})
exists also for $\alpha < \alpha_0$ but is located for
$c \lesssim 1$.
\end{remark}

\begin{remark}
The constraint $\alpha \leq 2$ is necessary to apply results of \cite{JH1} 
on existence of single-lobe solution $\phi$ and the non-degeneracy of the kernel of 
$\mathcal{H}_{c,\alpha}$ at $\phi$. The periodic wave $\phi$ may develop oscillations 
for $\alpha > 2$ and sufficiently large $c$, in which case methods of \cite{JH1} are not applicable. 
\end{remark}

\begin{remark}
Theorem \ref{theorem-main-1} implies that the iterative method (\ref{T}) diverges
from $\phi$ for the classical BO equation with $\alpha = 1$.
Although the iterative method (\ref{T}) converges to $\phi$ for the classical
KdV equation with $\alpha = 2$ for $c \gtrsim 1$, we show numerically that
it diverges from $\phi$ for $c \gtrapprox 2.3$. Instabilities of the iterative
method (\ref{T}) are explained by the unstable eigenvalues of the generalized
eigenvalue problem (\ref{gEp}).
\end{remark}

As is suggested by Theorem \ref{theorem-main-1}, the iterative method (\ref{T}) is
unsuccessful in approximating the solution $\phi$ to the boundary-value problem
(\ref{ode}). On the other hand, we can develop a similar method for the solution
$\psi$ of the equivalent boundary-value problem (\ref{ode-new}),
which is related to $\phi$ by the transformation (\ref{phi-psi}).
By setting $\tilde{\mathcal{L}}_{c,\alpha} := c - D_{\alpha}$, we denote
\begin{equation}
\label{rule-new}
\tilde{M}(w) := \frac{\langle \tilde{\mathcal{L}}_{c,\alpha} w, w \rangle}{\langle w^2,w \rangle}, \quad w \in H^{\alpha}_{\rm per}(-\pi,\pi),
\end{equation}
and define a sequence $\{ w_n \}_{n \in \mathbb{N}}$ in $H^{\alpha}_{\rm per}(-\pi,\pi)$
for any suitable initial guess $w_0 \in H^{\alpha}_{\rm per}(-\pi,\pi)$ by the iterative rule:
\begin{equation}
\label{T-new}
w_{n+1} = \tilde{T}_{c,\alpha}(w_n) := \left[ \tilde{M}(w_n) \right]^2 \tilde{\mathcal{L}}_{c,\alpha}^{-1} (w_n^2), \quad n \in \mathbb{N}.
\end{equation}
Contraction of the linearized operator
of the iterative rule (\ref{T-new}) is defined by the spectrum of the generalized eigenvalue problem
\begin{equation}
\label{gEp-new}
\tilde{\mathcal{H}}_{c,\alpha} v = \lambda \tilde{\mathcal{L}}_{c,\alpha} v, \quad v \in H^{\alpha}_{\rm per}(-\pi,\pi),
\end{equation}
where the new Jacobian operator for the boundary-value problem (\ref{ode-new})
is identical to the Jacobian operator (\ref{Jacobian}) of the boundary-value problem (\ref{ode}):
\begin{equation}
\label{Jacobian-new}
\tilde{\mathcal{H}}_{c,\alpha} := c - D_{\alpha} - 2 \psi = -c - D_{\alpha} - 2 (-c + \psi) = \mathcal{H}_c,
\end{equation}
where the transformation (\ref{phi-psi}) has been used.

Section \ref{sec-4} presents the main result on convergence of the iterative method (\ref{T-new}).
In addition to the count of unstable eigenvalues in the generalized eigenvalue problem (\ref{gEp-new}),
we use here positivity of the wave profile $\psi$ for solutions to the boundary-value problem (\ref{ode-new}).

\begin{theorem}
\label{theorem-main-2}
For every $c > 1$ and $\alpha \in (\alpha_0,2]$, there exists a unique
single-lobe solution $\psi \in H^{\alpha}_{\rm per}(-\pi,\pi)$ to the boundary-value
problem (\ref{ode-new}) such that $\psi(x) > 0$ for every $x \in [-\pi,\pi]$.
This unique solution is an asymptotically stable (up to a translation) fixed point of the iterative method (\ref{T-new})
for every $c > 1$ and $\alpha \in (\alpha_0,2]$.
\end{theorem}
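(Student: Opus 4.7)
The plan is to exploit a structural difference between the two iterative schemes. Whereas $\mathcal{L}_{c,\alpha}$ in (\ref{T}) has the sign-indefinite Fourier symbol $-c+|\xi|^{\alpha}$, its counterpart $\tilde{\mathcal{L}}_{c,\alpha}=c-D_{\alpha}$ in (\ref{T-new}) has the strictly positive Fourier symbol $c+|\xi|^{\alpha}$, so $\tilde{\mathcal{L}}_{c,\alpha}$ is self-adjoint and positive definite on $H^{\alpha}_{\rm per}(-\pi,\pi)$. This positivity is what upgrades the conditional statements of Theorem \ref{theorem-main-1} to the unconditional ones claimed here. Existence, uniqueness, and the single-lobe shape of $\psi\in H^{\alpha}_{\rm per}(-\pi,\pi)$ follow from Theorem \ref{theorem-main-1} together with the transformation (\ref{phi-psi}), and strict positivity $\psi(x)>0$ will come from Theorem \ref{lemma-positive}. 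What is left is asymptotic stability (up to translation) of $\psi$ as a fixed point of $\tilde{T}_{c,\alpha}$.

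I would start by analysing the spectrum of (\ref{gEp-new}). Conjugation by $\tilde{\mathcal{L}}_{c,\alpha}^{1/2}$ turns (\ref{gEp-new}) into a self-adjoint problem, so all generalized eigenvalues $\lambda$ are real and the eigenvectors form a basis orthogonal with respect to $\langle\tilde{\mathcal{L}}_{c,\alpha}\cdot,\cdot\rangle$. The positivity of $\psi$ yields the positive self-adjoint identity
\begin{equation*}
2\tilde{\mathcal{L}}_{c,\alpha}-\tilde{\mathcal{H}}_{c,\alpha}=c-D_{\alpha}+2\psi,
\end{equation*}
and pairing the relation $(2\tilde{\mathcal{L}}_{c,\alpha}-\tilde{\mathcal{H}}_{c,\alpha})v=(2-\lambda)\tilde{\mathcal{L}}_{c,\alpha}v$ against $v$ forces the unconditional upper bound $\lambda<2$. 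For the lower part of the spectrum, $\tilde{\mathcal{L}}_{c,\alpha}\psi=\psi^{2}$ gives $\tilde{\mathcal{H}}_{c,\alpha}\psi=-\psi^{2}=-\tilde{\mathcal{L}}_{c,\alpha}\psi$, so $\psi$ itself is a generalized eigenvector with $\lambda=-1$. Combining Sylvester's law of inertia with the spectral results from \cite{JH1} (as collected in Section \ref{sec-2}), namely that $\mathcal{H}_{c,\alpha}=\tilde{\mathcal{H}}_{c,\alpha}$ has exactly one negative eigenvalue and a simple kernel spanned by $\psi'$, I conclude that $\lambda=-1$ is the unique negative generalized eigenvalue, $\lambda=0$ is simple with eigenvector $\psi'$, and all remaining generalized eigenvalues lie in $(0,2)$.

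A standard linearization of $\tilde{T}_{c,\alpha}$ at $\psi$ (as in \cite{PelSt,ChugPel-KdV}) produces a bounded operator $L_{\psi}$ on $H^{\alpha}_{\rm per}(-\pi,\pi)$ that preserves the splitting $\mathrm{span}\{\psi\}\oplus X$, where $X:=\{v:\langle\psi^{2},v\rangle=0\}$; one computes $L_{\psi}\psi=0$ on the radial component and $L_{\psi}v=(1-\lambda)v$ on $X$ for each generalized eigenvector $v$ of (\ref{gEp-new}) lying in $X$. Strict positivity of $\psi$ gives $\langle\psi^{2},\psi\rangle=\|\psi\|_{L^{3}}^{3}>0$, so the negative eigenvector $\psi$ is excluded from $X$; periodicity gives $\langle\psi^{2},\psi'\rangle=0$, so the translation mode $\psi'$ lies in $X$. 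Hence the spectrum of $L_{\psi}$ equals $\{0\}\cup\{1\}\cup\sigma$ with $\sigma\subset(-1,1)$ and the eigenvalue $1$ simple. Passing to the quotient by translations and running a contraction-mapping argument in a small $H^{\alpha}_{\rm per}$-neighbourhood of $\psi$, in the spirit of \cite{PelSt}, then yields asymptotic stability up to translation for every $c>1$ and $\alpha\in(\alpha_{0},2]$.

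I expect the most delicate step to be the exclusion of the unique negative generalized eigenvalue from the constraint subspace $X$: it relies crucially on the pointwise positivity $\psi>0$ (so that $\langle\psi^{2},\psi\rangle\neq 0$) and on the non-degeneracy of $\mathcal{H}_{c,\alpha}$ along the single-lobe branch (so that $\psi$, up to scale, really is the only negative generalized eigenvector). These two ingredients are exactly what restricts the statement to $\alpha\in(\alpha_{0},2]$.
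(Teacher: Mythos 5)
Your proposal is correct and follows essentially the same route as the paper: positivity and uniqueness are delegated to Theorem \ref{lemma-positive}, the generalized eigenvalue problem (\ref{gEp-new}) is shown to have the single negative eigenvalue $-1$ (eigenvector $\psi$) and simple zero eigenvalue (eigenvector $\psi'$) with all other eigenvalues positive, these two modes are removed by the Petviashvili constraint and a translation modulation, and the remainder is a strict contraction. The only cosmetic differences are that you invoke Sylvester's law of inertia where the paper cites Theorem 1 of \cite{ChugPel}, and you bound $\lambda<2$ via positivity of $2\tilde{\mathcal{L}}_{c,\alpha}-\tilde{\mathcal{H}}_{c,\alpha}$ whereas the paper gets the sharper $\lambda<1$ from $\tilde{\mathcal{L}}_{c,\alpha}-\tilde{\mathcal{H}}_{c,\alpha}=2\psi>0$; both suffice for $\sigma(\tilde{\mathcal{L}}_T)\subset(-1,1)$ on the constrained subspace.
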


\begin{remark}
The unconditional convergence of the iterative method (\ref{T-new}) compared to the iterative method (\ref{T})
has a well-known physical interpretation. The phase velocity of the linear waves of the fractional KdV equation (\ref{kdv})
on the zero background is strictly negative, hence the travelling wave $u(x,t) = \phi(x+ct)$ propagating to the left
is in resonance with the linear waves. On the other hand, the travelling wave on the constant background $b := -c < 0$
propagates to the right and avoids resonances with the linear waves on the background $b < 0$,
which still have negative phase velocity.
\end{remark}

\begin{remark}
The new iterative method (\ref{T-new}) can be considered as a modification of
the classical Petviashvili method (\ref{T}) after the shift of the field variable 
proposed in \cite{AD17}. The modified algorithm consists of three steps.
In the first step, the constant value $b$ is found from the constant
solution of the stationary problem (\ref{ode}). Solving $c b + b^2 = 0$ for
nonzero $b$ yields $b = -c$. In the second step, the change of variables $\phi = b + \psi$ transforms
the original problem (\ref{ode}) to the new problem (\ref{ode-new}), which is confirmed
from the transformation formula (\ref{phi-psi}) since $b = -c$.
Finally, the third step is the iterative method for the transformed problem
(\ref{ode-new}), which is defined by the new iterative operator $\tilde{T}_{c,\alpha}$ in (\ref{T-new}).
\end{remark}

\begin{remark}
In the case of solitary waves, the boundary-value problem (\ref{ode}) for $\phi$ and $c > 0$ admits no
solutions and the iterative method (\ref{T}) cannot be defined since $\mathcal{L}_{c,\alpha}$ is not invertible
in $L^2(\mathbb{R})$ for $c > 0$. On the other hand, the boundary-value problem (\ref{ode-new}) for $\psi$ and $c > 0$
admits solitary wave solutions and the iterative method (\ref{T-new}) is well-defined to approximate
this solution, as shown numerically in \cite{D18}.
\end{remark}

\section{Periodic waves of the fractional KdV equation}
\label{sec-2}

We collect together some results on existence of periodic wave solutions to the boundary-value
problems (\ref{ode-new}) and (\ref{ode}). Some of the previous results have been improved and 
we specify explicitly where the improvement has been made.
Section \ref{sec-21} presents results on the small-amplitude limit of the periodic waves with profile $\phi$.
Sections \ref{sec-22} and \ref{sec-23} collects together explicit expressions
for the periodic waves in the classical KdV and BO equations, respectively.
Section \ref{sec-24} gives results on the positivity of the wave profile $\psi$.

\subsection{Small-amplitude limit of the periodic waves}
\label{sec-21}

The following result reports on existence of the periodic wave $\phi$ of the boundary-value
problem (\ref{ode}) in the small-amplitude limit.
The small-amplitude periodic waves bifurcate from the constant zero solution
to the boundary-value problem (\ref{ode}).
The construction of the small-amplitude periodic waves is nearly identical to Lemma 2.1 in \cite{J} subject to the following two changes.
First, the constant of integration is set to zero thanks to the Galilean invariance, while
in \cite{J} the constant was carried as an additional (redundant) parameter of the problem.
Second, the speed $c$ is used as the main parameter of the periodic solution while the period is set to $2\pi$,
whereas in \cite{J} $c$ was set to $1$ and the period was taken as the main parameter of the periodic solution.

Although the formal computations of the periodic waves in the small-amplitude limit
hold for every $\alpha > 0$, the justification of the perturbative
expansions requires $\alpha > 1/2$, for which $H^{\alpha}_{\rm per}(-\pi,\pi)$
is a Banach algebra with respect to multiplication with a continuous embedding into
$L^{\infty}_{\rm per}(-\pi,\pi)$. A typical justification of the perturbative expansions
is based on the method of Lyapunov--Schmidt reductions which requires smoothness of the nonlinear mappings. This smoothness
is guaranteed in $H^{\alpha}_{\rm per}(-\pi,\pi)$ with $\alpha > 1/2$.
Since refinement to $\alpha \in (0,1/2)$
is not important for the subject of our work, we leave the restriction $\alpha > 1/2$
in the same way as it was used in Theorem A.1 in \cite{J}.

\begin{theorem}
\label{lemma-small}
For every $c \gtrsim 1$ and $\alpha > \alpha_0$, there exists a unique single-lobe
solution $\phi$ of the boundary-value problem (\ref{ode}) with the global maximum at $x = 0$.
The wave profile $\phi$ and the wave speed $c$ are real-analytic functions of the wave amplitude $a$
satisfying the following Stokes expansions:
\begin{equation}
\label{wave-expansion}
\phi_{a,\alpha}(x) = a \cos(x) + a^2 \phi_2(x) + a^3 \phi_3(x) + a^4 \phi_4(x) + \mathcal{O}(a^5),
\end{equation}
and
\begin{equation}
\label{speed-expansion}
c_{a,\alpha} = 1 + c_2 a^2 + c_4 a^4 + \mathcal{O}(a^6),
\end{equation}
where the $\alpha$-dependent corrections terms $\{\phi_2,\phi_3,\phi_4\}$ and $\{ c_2, c_4 \}$
are defined in (\ref{correction1})--(\ref{correction5}) below.
\end{theorem}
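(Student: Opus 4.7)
The plan is to apply local bifurcation theory to the smooth map
$$
F(\phi, c) := (c + D_\alpha)\phi + \phi^2, \qquad F : H^\alpha_{\mathrm{per,even}}(-\pi,\pi) \times \mathbb{R} \to L^2_{\mathrm{per,even}}(-\pi,\pi),
$$
which is real-analytic because $\alpha > 1/2$ makes $H^\alpha_{\mathrm{per}}(-\pi,\pi)$ a Banach algebra continuously embedded in $L^\infty$. The Fourier symbol of $c+D_\alpha$ on the $n$-th mode is $c - n^\alpha$, so the kernel of the linearization at $(\phi,c) = (0,1)$, restricted to the even subspace, is one-dimensional and spanned by $\cos(x)$, while $(1 + D_\alpha)$ is invertible on the $L^2$-orthogonal complement of $\cos(x)$ (with bounded inverse, since $1 - n^\alpha \neq 0$ for every $n \geq 2$ and the symbol grows like $n^\alpha$).

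Next I would perform a Lyapunov--Schmidt reduction: write $\phi = a\cos(x) + \phi_\perp$ with $\phi_\perp \perp \cos(x)$, solve the range equation for $\phi_\perp = \phi_\perp(a,c)$ by the analytic implicit function theorem, and then solve the scalar bifurcation equation obtained by projecting onto $\cos(x)$. This produces a unique real-analytic branch $a \mapsto (\phi_{a,\alpha}, c_{a,\alpha})$ emerging from $(0,1)$. The translation symmetry $x \mapsto x + \pi$ sends a solution of amplitude $a$ to one of amplitude $-a$ while leaving $c$ invariant, so $c_{a,\alpha}$ is even in $a$, which yields the form of the expansion (\ref{speed-expansion}). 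The explicit Stokes coefficients in (\ref{wave-expansion})--(\ref{speed-expansion}) are then computed by substituting the ansatz into (\ref{ode}) and matching like powers of $a$. At $O(a^2)$ one obtains $(1+D_\alpha)\phi_2 = -\cos^2(x)$, giving $\phi_2(x) = -\tfrac{1}{2} - \tfrac{1}{2(1-2^\alpha)}\cos(2x)$. At $O(a^3)$ the Fredholm solvability condition projected onto $\cos(x)$ fixes
$$
c_2 \;=\; 1 + \frac{1}{2(1 - 2^\alpha)},
$$
after which $\phi_3$ is determined in the complement of $\cos(x)$. Iterating at fourth order yields $\phi_4$ and $c_4$.

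The role of the threshold $\alpha_0$ enters through the sign of $c_2$: the formula above shows $c_2 > 0$ iff $2^\alpha > 3/2$, that is, iff $\alpha > \alpha_0$, which is precisely the condition needed to invert $c = c_{a,\alpha}$ for small $a$ in the regime $c \gtrsim 1$ via the analytic inverse function theorem. The single-lobe property, and the placement of the maximum at $x=0$, follow from the leading-order formula $\phi_{a,\alpha}(x) = a\cos(x) + \mathcal{O}_{H^\alpha}(a^2)$ together with the embedding $H^\alpha_{\mathrm{per}} \hookrightarrow C^0_{\mathrm{per}}$ for $\alpha > 1/2$: since differentiation under this expansion is justified for the Fourier-expanded corrections, $\phi'_{a,\alpha}$ is a small perturbation of $-a\sin(x)$, so for $a$ sufficiently small it vanishes only at $x = 0$ and $x = \pm\pi$. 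The main technical point is to keep the invertibility of $(1+D_\alpha)$ on $\{\cos(x)\}^\perp$ uniform along the perturbative construction; this is essentially automatic because the spectrum of $D_\alpha$ on $2\pi$-periodic even functions is the discrete set $\{n^\alpha\}_{n\geq 0}$, so $(1+D_\alpha)|_{\{\cos(x)\}^\perp}$ has a bounded inverse mapping $L^2$ to $H^\alpha$. The residual bookkeeping consists in verifying, at each order, that the inhomogeneity is orthogonal to $\cos(x)$ after the correct choice of $c_{2k}$, which is the standard solvability condition for the Lyapunov--Schmidt reduction.
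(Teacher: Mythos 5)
Your proposal is correct and follows essentially the same route as the paper: a formal Stokes expansion solved order by order, with the solvability (Fredholm) conditions at odd orders fixing $c_2, c_4$, and the rigorous justification of existence, uniqueness, and analyticity via an (analytic) Lyapunov--Schmidt reduction in the even subspace for $\alpha>1/2$ --- which the paper delegates to Lemma 2.1 and Theorem A.1 of \cite{J} while you sketch it directly. Your computed $\phi_2$ and $c_2$ agree with (\ref{correction1})--(\ref{correction2}) after rewriting $1-2^{\alpha}=-(2^{\alpha}-1)$, and your symmetry argument for the evenness of $c_{a,\alpha}$ in $a$ and the nondegeneracy argument for the single-lobe property match the paper's reasoning.
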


\begin{proof}
We give algorithmic computations of the higher-order coefficients to the periodic wave
by using the classical Stokes expansions:
$$
\phi(x) = \sum_{k=1}^{\infty} a^k \phi_k(x), \quad c = 1 + \sum_{k=1}^{\infty} c_{2k} a^{2k}.
$$
The correction terms satisfy recursively,
\begin{eqnarray*}
\left\{ \begin{array}{l}
\mathcal{O}(a) \; : \quad (1+D_{\alpha}) \phi_1 = 0, \\
\mathcal{O}(a^2) : \quad (1+D_{\alpha}) \phi_2 + \phi_1^2 = 0, \\
\mathcal{O}(a^3) : \quad (1+D_{\alpha}) \phi_3 + c_2 \phi_1 + 2 \phi_1 \phi_2 = 0,\\
\mathcal{O}(a^4) : \quad (1+D_{\alpha}) \phi_4 + c_2 \phi_2 + 2 \phi_1 \phi_3 + \phi_2^2 = 0,\\
\mathcal{O}(a^5) : \quad (1+D_{\alpha}) \phi_5 + c_2 \phi_3 + c_4 \phi_1 + 2 \phi_1 \phi_4 + 2 \phi_2 \phi_3 = 0,
\end{array} \right.
\end{eqnarray*}
For the single-lobe wave profile $\phi$ with the global maximum at $x = 0$,
we select uniquely $\phi_1(x) = \cos(x)$ since ${\rm Ker}_{\rm even}(1+D_{\alpha}) = {\rm span}\{ \cos(\cdot) \}$
in the space of even functions in $L^2_{\rm per}(-\pi,\pi)$.
In order to select uniquely all other corrections to the Stokes expansion (\ref{wave-expansion}),
we require the corrections terms $\{ \phi_k \}_{k \geq 2}$ to be orthogonal to $\phi_1$ in $L^2_{\rm per}(-\pi,\pi)$.

Solving the inhomogeneous equation at $\mathcal{O}(a^2)$ yields the exact solution in $H^{\alpha}_{\rm per}(-\pi,\pi)$:
\begin{equation}
\label{correction1}
\phi_2(x) = -\frac{1}{2} + \frac{1}{2(2^{\alpha} - 1)} \cos(2x).
\end{equation}
The inhomogeneous equation at $\mathcal{O}(a^3)$ admits a solution $\phi_3 \in H^{\alpha}_{\rm per}(-\pi,\pi)$ if and only
if the right-hand side is orthogonal to $\phi_1$, which selects uniquely the correction $c_2$ by
\begin{equation}
\label{correction2}
c_2 = 1 - \frac{1}{2(2^{\alpha} - 1)}.
\end{equation}
After the resonant term is removed, the inhomogeneous equation at $\mathcal{O}(a^3)$ yields the exact solution in $H^{\alpha}_{\rm per}(-\pi,\pi)$:
\begin{equation}
\label{correction3}
\phi_3(x) = \frac{1}{2 (2^{\alpha}-1) (3^{\alpha}-1)} \cos(3x).
\end{equation}
By continuing the algorithm, we find the exact solution of the inhomogeneous equation at $\mathcal{O}(a^4)$ in $H^{\alpha}_{\rm per}(-\pi,\pi)$:
\begin{eqnarray}
\nonumber
\phi_4(x) & = & \frac{1}{4} - \frac{1}{4 (2^{\alpha}-1)} - \frac{1}{8 (2^{\alpha} -1)^2}
+ \frac{1}{4 (2^{\alpha} - 1)^2} \left[ \frac{2}{3^{\alpha} - 1} - \frac{1}{2^{\alpha} -1} \right] \cos(2x) \\
& \phantom{t} &
+ \frac{1}{8(2^{\alpha} - 1)(4^{\alpha}- 1)} \left[ \frac{4}{3^{\alpha}-1} + \frac{1}{2^{\alpha}-1} \right] \cos(4x).
\label{correction4}
\end{eqnarray}
Finally, the inhomogeneous equation at $\mathcal{O}(a^5)$ admits a solution $\phi_5 \in H^{\alpha}_{\rm per}(-\pi,\pi)$ if and only
if the right-hand side is orthogonal to $\phi_1$, which selects uniquely the correction $c_4$ by
\begin{equation}
\label{correction5}
c_4 = -\frac{1}{2} + \frac{1}{2(2^{\alpha} - 1)} + \frac{1}{4(2^{\alpha}-1)^2} + \frac{1}{4(2^{\alpha}-1)^3}- \frac{3}{4(2^{\alpha}-1)^2 (3^{\alpha}-1)}.
\end{equation}
Note that $c_2 > 0$ if $\alpha > \alpha_0 := \log 3/\log 2 - 1$, which implies
that the small-amplitude periodic wave with profile $\phi$ exists in the boundary-value problem (\ref{ode}) 
for $c \gtrsim 1$ and $\alpha > \alpha_0$. The periodic wave has a global maximum at $x = 0$ for small $a$ 
since $x = 0$ is the only maximum of $\phi_1(x) = \cos(x)$ and $\phi'(0) = 0$ with $\phi''(0) = -a + \mathcal{O}(a^2) < 0$. 

Justification of the existence, uniqueness, and analyticity of the Stokes expansions (\ref{wave-expansion}) and (\ref{speed-expansion})
is performed with the method of Lyapunov--Schmidt reductions for $\alpha > 1/2$, see Lemma 2.1 and Theorem A.1 in \cite{J}.
Since $\alpha_0 > 1/2$, the justification procedure applies for every $\alpha > \alpha_0$.
\end{proof}

\begin{remark}
\label{remark-c-2}
If $\alpha < \alpha_0$, then $c_2 < 0$ so that the small-amplitude periodic wave exists for $c \lesssim 1$. 
The critical value $\alpha_0$ can also be seen in the expansion of
the wave period $T$ (for fixed $c = 1$) with respect to the wave amplitude $a$ in Lemma 2.1 of \cite{J}.
\end{remark}

\begin{remark}
Variational results on existence of finite-amplitude periodic waves in the boundary-value problem (\ref{ode-new})
are obtained in Proposition 2.1 of \cite{JH1} in the energy space $H^{\alpha/2}_{\rm per}(-\pi,\pi)$
for $\alpha \in (1/3,2]$. It is shown that there exists a local minimizer of
energy for fixed momentum and mass for every $c > 0$, however, it is overlooked in \cite{JH1}
that the local minimizer may coincide with the nonzero constant solution $\psi_c(x) = c$ for all $x \in [-\pi,\pi]$
to the same boundary-value problem (\ref{ode-new}), see also Theorem \ref{lemma-positive}.
\end{remark}

For further reference, we prove the following technical result. For notational convenience, we
omit parameters $a$ and $\alpha$ when we refer to the periodic wave profile $\phi$ which solves
the boundary-value problem (\ref{ode}) for some $c \gtrsim 1$.

\begin{lemma}
\label{proposition-negative}
For every $c \gtrsim 1$, the periodic wave $\phi$ defined in Theorem \ref{lemma-small} satisfies
\begin{equation}
\int_{-\pi}^{\pi} \phi^3 dx  \left. \begin{array}{l} < 0, \quad \alpha > \alpha_0, \\
> 0, \quad \alpha < \alpha_0, \end{array} \right\}
\label{wave-negative}
\end{equation}
and
\begin{equation}
\int_{-\pi}^{\pi} \phi (\phi')^2 dx \left. \begin{array}{l} < 0, \quad \alpha > \alpha_1, \\
> 0, \quad \alpha < \alpha_1, \end{array} \right\}
\label{wave-assumption}
\end{equation}
where $\alpha_0$ and $\alpha_1$ are given by (\ref{alpha}).
\end{lemma}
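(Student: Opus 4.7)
My plan is to substitute the Stokes expansion (\ref{wave-expansion})--(\ref{speed-expansion}) from Theorem \ref{lemma-small} into each of the two integrals and track the leading nonzero contribution in powers of $a$. Since $\phi_1(x) = \cos(x)$ and $\phi_2(x) = -\tfrac12 + \tfrac{1}{2(2^\alpha-1)}\cos(2x)$ are known explicitly from (\ref{correction1}), and all higher corrections $\phi_k$ for $k \geq 2$ lie in $\{\cos(\cdot)\}^\perp$, I can evaluate inner products of trigonometric monomials over $[-\pi,\pi]$ by hand, and the sign of the leading coefficient will give the conclusion for $a$ sufficiently small (equivalently, for $c \gtrsim 1$).

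For the first integral, writing $\phi = a\phi_1 + a^2\phi_2 + \mathcal{O}(a^3)$ and cubing gives
\begin{equation*}
\int_{-\pi}^{\pi}\phi^3\,dx = a^3\int_{-\pi}^{\pi}\cos^3(x)\,dx + 3a^4\int_{-\pi}^{\pi}\cos^2(x)\,\phi_2(x)\,dx + \mathcal{O}(a^5).
\end{equation*}
The $\mathcal{O}(a^3)$ integral vanishes. Expanding $\cos^2(x) = \tfrac12 + \tfrac12\cos(2x)$ and integrating against $\phi_2$ picks up only the constant mode and the $\cos(2x)$ mode, producing a coefficient proportional to $3 - 2^{\alpha+1}$ divided by $2^{\alpha}-1$. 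Since $\alpha_0$ is precisely the root of $2^{\alpha+1}=3$, the sign of this coefficient flips at $\alpha_0$ in the required direction, yielding (\ref{wave-negative}).

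For the second integral, $(\phi')^2 = a^2\sin^2(x) + 2a^3\sin(x)(-\phi_2'(x)) + \mathcal{O}(a^4)$ combined with $\phi = a\cos(x) + a^2\phi_2 + \mathcal{O}(a^3)$ gives
\begin{equation*}
\int_{-\pi}^{\pi}\phi(\phi')^2\,dx = a^3\int_{-\pi}^{\pi}\cos(x)\sin^2(x)\,dx + a^4\left[\int_{-\pi}^{\pi}\phi_2\sin^2(x)\,dx + 2\int_{-\pi}^{\pi}\cos(x)\sin(x)\cdot(-\phi_2'(x))\,dx\right] + \mathcal{O}(a^5).
\end{equation*}
Again the $\mathcal{O}(a^3)$ term vanishes. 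Using $\phi_2'(x) = -\tfrac{1}{2^\alpha-1}\sin(2x)$, the second bracketed integral equals $\int \sin^2(2x)/(2^\alpha-1)\,dx = \pi/(2^\alpha-1)$, while the first, using $\sin^2(x)\cos(2x) = \sin^2(x) - 2\sin^4(x)$, produces an explicit rational expression in $2^\alpha-1$. Combining them the leading coefficient simplifies to a constant multiple of $(5 - 2^{\alpha+1})/(2^\alpha-1)$. The root $2^{\alpha+1}=5$ is exactly $\alpha_1$ from (\ref{alpha}), so the sign change occurs at $\alpha_1$, giving (\ref{wave-assumption}).

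The routine part is just the trigonometric bookkeeping; the only thing to be careful about is confirming that no subleading terms could spoil the sign for $c \gtrsim 1$, which follows from the real-analyticity of the Stokes expansion in $a$ asserted by Theorem \ref{lemma-small}: once the leading $\mathcal{O}(a^4)$ coefficient is nonzero, its sign controls the integral for $a$ small enough, and the only subtlety is at the critical values $\alpha = \alpha_0$ and $\alpha = \alpha_1$, which are excluded from the statement.
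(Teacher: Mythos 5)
Your proposal is correct and follows essentially the same route as the paper: substitute the Stokes expansion, observe that the $\mathcal{O}(a^3)$ terms integrate to zero, and extract the leading $\mathcal{O}(a^4)$ coefficients, which come out as $\tfrac{3\pi a^4}{4(2^{\alpha}-1)}(3-2^{\alpha+1})$ and $\tfrac{\pi a^4}{4(2^{\alpha}-1)}(5-2^{\alpha+1})$, matching the paper's displayed formulas. Your trigonometric bookkeeping checks out, and the closing remark about real-analyticity controlling the subleading terms is exactly the (implicit) justification the paper relies on.
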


\begin{proof}
By using Stokes expansions (\ref{wave-expansion}), we compute
$$
\int_{-\pi}^{\pi} \phi^3 dx = \frac{3\pi a^4}{4 (2^{\alpha} - 1)} (3 - 2^{\alpha + 1}) + \mathcal{O}(a^6)
$$
and
$$
\int_{-\pi}^{\pi} \phi (\phi')^2 dx = \frac{\pi a^4}{4 (2^{\alpha} - 1)} (5 - 2^{\alpha + 1}) + \mathcal{O}(a^6),
$$
from which (\ref{wave-negative}) and (\ref{wave-assumption}) follows thanks to the definition (\ref{alpha}).
\end{proof}

Since the Fourier basis $\{ e^{inx} \}_{n \in \mathbb{Z}}$ in $L^2_{\rm per}(-\pi,\pi)$
diagonalizes $\mathcal{L}_{c,\alpha}$, we obtain the spectrum of
$\mathcal{L}_{c,\alpha}$ in $L^2_{\rm per}(-\pi,\pi)$ for every $c \in \mathbb{R}$ and $\alpha > 0$:
\begin{equation}
\label{spectrum-L}
\sigma(\mathcal{L}_{c,\alpha}) = \{ -c + |n|^{\alpha}, \;\; n \in \mathbb{Z} \}.
\end{equation}
The following lemma clarifies the number and multiplicity of negative and zero eigenvalues
of the Jacobian operator $\mathcal{H}_{c,\alpha}$ in $L^2_{\rm per}(-\pi,\pi)$,
where the expression for $\mathcal{H}_{c,\alpha}$ is given by (\ref{Jacobian}).

\begin{lemma}
\label{lemma-2}
For every $c \gtrsim 1$ and $\alpha > \alpha_0$, $\sigma(\mathcal{H}_{c,\alpha})$
in $L^2_{\rm per}(-\pi,\pi)$ consists of one simple negative eigenvalue, a simple zero eigenvalue,
and a countable sequence of positive eigenvalues bounded away from zero.
\end{lemma}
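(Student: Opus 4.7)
The plan is to deform the spectrum analytically from the reference point $a = 0$, where $c = 1$ and $\phi \equiv 0$, so that $\mathcal{H}_{1,\alpha} = -1 - D_\alpha$ is diagonal in the Fourier basis. From (\ref{spectrum-L}) applied with $\phi \equiv 0$ and $c = 1$, one reads
\begin{equation*}
\sigma(\mathcal{H}_{1,\alpha}) = \{|n|^\alpha - 1 : n \in \mathbb{Z}\},
\end{equation*}
which consists of the simple negative eigenvalue $-1$ (eigenfunction the constant $1$), a semi-simple zero eigenvalue of multiplicity two (eigenfunctions $\cos x$, $\sin x$), and a discrete family of positive eigenvalues bounded below by $2^\alpha - 1$, which exceeds $\tfrac{1}{2}$ exactly when $\alpha > \alpha_0$. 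Since $\phi$ is even, $\mathcal{H}_{c,\alpha}$ leaves invariant the orthogonal decomposition $L^2_{\rm per}(-\pi,\pi) = L^2_{\rm even} \oplus L^2_{\rm odd}$, and in each invariant subspace all of the unperturbed eigenvalues listed above are simple. Hence Kato's analytic perturbation theory applies separately in each subspace and produces analytic eigenvalue branches as $a$ increases from zero (with $\phi$ and $c$ depending analytically on $a$ by Theorem \ref{lemma-small}).

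On $L^2_{\rm odd}$, the branch starting at $0$ with eigenfunction $\sin x$ remains pinned at zero: differentiating (\ref{ode}) in $x$ gives $\mathcal{H}_{c,\alpha}\phi' = 0$, and Theorem \ref{lemma-small} provides the nontrivial odd element $\phi' = -a\sin x + \mathcal{O}(a^2)$. This furnishes the simple zero eigenvalue asserted in the lemma. On $L^2_{\rm even}$, the simple eigenvalue $0$ with eigenfunction $v_0 = \cos x$ generates an analytic branch $\mu(a) = \mu_2 a^2 + \mathcal{O}(a^4)$; the linear term vanishes because $\int_{-\pi}^\pi \cos^3 x\,dx = 0$. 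Writing $\mathcal{H}_{c,\alpha} = H_0 + a H_1 + a^2 H_2 + \mathcal{O}(a^3)$ with $H_0 = -1 - D_\alpha$, $H_1 = -2\cos x$, and $H_2 = -c_2 - 2\phi_2$, the Rayleigh--Schr\"odinger formula gives
\begin{equation*}
\mu_2 \|v_0\|^2 = \langle v_0, H_2 v_0\rangle + \langle v_0, H_1 v_1\rangle,
\end{equation*}
where $v_1 \perp v_0$ solves $H_0 v_1 = 2\cos^2 x$, yielding $v_1 = -1 + (2^\alpha - 1)^{-1}\cos(2x)$. A short calculation using $\phi_2$ from (\ref{correction1}) and $c_2$ from (\ref{correction2}) collapses this to
\begin{equation*}
\mu_2 = 2 - \frac{1}{2^\alpha - 1},
\end{equation*}
which is strictly positive precisely when $2^\alpha > 3/2$, i.e.\ $\alpha > \alpha_0$.

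Combining the two subspaces, the double zero of $\mathcal{H}_{1,\alpha}$ splits into one exact zero (odd, simple, from translation invariance) and one strictly positive eigenvalue of size $\mu_2 a^2 > 0$ (even, simple). The remaining unperturbed eigenvalue $-1$ and the positive eigenvalues $\{|n|^\alpha - 1\}_{|n|\geq 2}$ are separated from one another by gaps of order one, so analytic perturbation theory keeps them within $\mathcal{O}(a)$ of their unperturbed values, preserving both their sign and their distance from zero uniformly for $c$ in some small interval $(1,c_0)$. The main obstacle is the explicit evaluation of $\mu_2$ together with the sign check: once the identity $\mu_2 = 2 - 1/(2^\alpha - 1)$ is in hand, the threshold $\alpha_0 = \log_2 3 - 1$ appears organically, and the rest is bookkeeping on a Fourier-diagonalizable reference operator.
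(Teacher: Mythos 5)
Your proof is correct and follows essentially the same route as the paper: perturbation from the $a=0$ limit where $\mathcal{H}_{1,\alpha}=\mathcal{L}_{1,\alpha}$ is Fourier-diagonal, parity decomposition to reduce the double zero eigenvalue to two simple ones, persistence of the odd zero via $\mathcal{H}_{c,\alpha}\phi'=0$, and a second-order computation giving $\mu_2 = 2c_2 = 2 - 1/(2^\alpha-1) > 0$ for $\alpha>\alpha_0$; your Rayleigh--Schr\"odinger formula is just a repackaging of the paper's solvability condition at order $a^2$, and yields the same value. The only cosmetic blemish is the aside that $2^\alpha-1>\tfrac12$ exactly when $\alpha>\alpha_0$, which is true but irrelevant to the boundedness away from zero of the remaining positive eigenvalues (that needs only $\alpha>0$); the threshold $\alpha_0$ enters solely through the sign of $\mu_2$, as you correctly conclude.
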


\begin{proof}
Note that $\sigma(\mathcal{H}_{c,\alpha})$  in $L^2_{\rm per}(-\pi,\pi)$ is purely discrete for every $c > 1$,
thanks to the compactness of $[-\pi,\pi]$ and boundedness of $\phi \in L^{\infty}_{\rm per}(-\pi,\pi)$.
For $c = 1$, $\mathcal{H}_{c=1,\alpha}$ coincides with $\mathcal{L}_{c=1,\alpha}$, hence
it follows from (\ref{spectrum-L}) that $\sigma(\mathcal{H}_{c=1,\alpha})$
has a simple negative eigenvalue, a double zero eigenvalue, and a countable sequence of positive eigenvalues bounded away from zero.

Since $\mathcal{H}_{c,\alpha} - \mathcal{L}_{c,\alpha} = - 2 \phi$ is a bounded perturbation
and $(\phi,c)$ depend analytically on $a$,
the analytic perturbation theory (Theorem VII.1.7 in \cite{Kato}) guarantees continuity of eigenvalues
for $c \gtrsim 1$ close to their limiting values as $c \to 1$. Therefore, the proof is achieved if we can show
that the double zero eigenvalue of $\mathcal{H}_{c,\alpha}$ in $L^2_{\rm per}(-\pi,\pi)$ splits
as $c \gtrsim 1$ into a simple zero eigenvalue and a simple positive eigenvalue.

Since ${\rm Ker}(\mathcal{H}_{c=1,\alpha}) = {\rm span}\{\cos(\cdot),\sin(\cdot)\}$ and
$\mathcal{H}_{c,\alpha} \phi' = 0$ for every $c > 1$ with odd $\phi$, the zero eigenvalue
associated with the subspace ${\rm Ker}_{\rm odd}(\mathcal{H}_{c=1,\alpha}) = {\rm span}\{\sin(\cdot)\}$ persists for $c > 1$.
It remains to check the shift of the zero eigenvalue associated with the subspace
${\rm Ker}_{\rm even}(\mathcal{H}_{c=1,\alpha}) = {\rm span}\{\cos(\cdot)\}$. Hence, we expand $\mathcal{H}_{c,\alpha}$
in powers of $a$ by using (\ref{wave-expansion}):
\begin{equation}
\label{H-expansions}
\mathcal{H}_{c,\alpha} = -1 - D_{\alpha} - 2a \cos(x)-  \frac{a^2}{2^\alpha-1} \left[\cos(2x) - \frac{1}{2} \right] + \mathcal{O}(a^3)
\end{equation}
and look for solutions $(\lambda,v) \in \mathbb{R} \times H^{\alpha}_{\rm per}(-\pi,\pi)$
of the eigenvalue problem $\mathcal{H}_{c,\alpha} v = \lambda v$ near $(\lambda,v) = (0,\cos(\cdot))$ by using the expansions
\begin{eqnarray*}
\left\{ \begin{array}{l} v(x) = \cos(x) + a v_1(x) + a^2 v_2(x) + \mathcal{O}(a^3), \\
\lambda = a \lambda_1 + a^2 \lambda_2 + \mathcal{O}(a^3).\end{array} \right.
\end{eqnarray*}
The correction terms in $H^{\alpha}_{\rm per}(-\pi,\pi)$ satisfy recursively,
\begin{eqnarray*}
\left\{ \begin{array}{l}
\mathcal{O}(a) : \quad (1+D_{\alpha}) v_1 + 1 + \cos(2x) + \lambda_1 \cos(x) = 0, \\
\mathcal{O}(a^2) : \quad (1+D_{\alpha}) v_2 + 2 \cos(x) v_1 + \frac{1}{2^{\alpha}-1} \left[ \cos(2x) - \frac{1}{2} \right] \cos(x) +
\lambda_2 \cos(x) = 0.
\end{array} \right.
\end{eqnarray*}
In order to determine them uniquely, we impose orthogonality conditions of $\{ v_k \}_{k \geq 1}$ to
$\cos(\cdot)$ in $L^2_{\rm per}(-\pi,\pi)$. The linear inhomogeneous equation at $\mathcal{O}(a)$ admits a solution
$v_1 \in H^{\alpha}_{\rm per}(-\pi,\pi)$ if and only if $\lambda_1 = 0$, after which
the solution is found explicitly:
$$
v_1(x) = \frac{1}{2^\alpha-1}\cos(2x) - 1.
$$
The linear inhomogeneous equation at $\mathcal{O}(a^2)$ admits a solution
$v_2 \in H^{\alpha}_{\rm per}(-\pi,\pi)$ if and only if $\lambda_2 = 2 c_2$, where $c_2$ is defined by (\ref{correction2}).
Since $c_2 > 0$ if $\alpha > \alpha_0$, the small positive eigenvalue $\lambda = 2 c_2 a^2 + \mathcal{O}(a^3)$ bifurcates
from the zero eigenvalue as $c \gtrsim 1$. Functional-analytic setup for justification
of perturbative expansions can be found in \cite{J} (see also \cite{JH2}) for $\alpha > 1/2$,
which is met since $\alpha_0 > 1/2$.
\end{proof}

\begin{remark}
By using variational methods, it was shown in Proposition 3.1 and Lemma 3.3 of \cite{JH1} that
${\rm ker}(\mathcal{H}_{c,\alpha}) = {\rm span}\{\phi'\}$ is one-dimensional,
the zero eigenvalue is the lowest eigenvalue in the subspace of odd functions in $L^2_{\rm per}(-\pi,\pi)$,
and $\sigma(\mathcal{H}_{c,\alpha})$ has either one or two negative eigenvalues
for every $c > 1$ and $\alpha \in (1/3,2]$. By Lemma \ref{lemma-2},
$\sigma(\mathcal{H}_{c,\alpha})$ has only one simple negative eigenvalue for $\alpha > \alpha_0$
if $c \gtrsim 1$.
\end{remark}

The following lemma gives the {\em isospectrality} result for the linearized operator $\mathcal{H}_{c,\alpha}$
for all $c > 1$.

\begin{lemma}
\label{lemma-11}
For every $c > 1$ and $\alpha \in (\alpha_0,2]$, $\sigma(\mathcal{H}_{c,\alpha})$
in $L^2_{\rm per}(-\pi,\pi)$ consists of one simple negative eigenvalue,
a simple zero eigenvalue, and a countable sequence of positive eigenvalues
bounded away from zero.
\end{lemma}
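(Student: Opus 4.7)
The plan is to extend the local (near-bifurcation) result of Lemma \ref{lemma-2} to arbitrary $c>1$ by a continuation argument in the speed parameter $c$. The two ingredients I will rely on are (i) continuous dependence of the discrete spectrum of $\mathcal{H}_{c,\alpha}$ on $c$, and (ii) the non-degeneracy of the kernel, $\mathrm{ker}(\mathcal{H}_{c,\alpha}) = \mathrm{span}\{\phi'\}$, which holds for all $c>1$ and $\alpha\in(\alpha_0,2]\subset(1/3,2]$ by Proposition 3.1 and Lemma 3.3 of \cite{JH1} together with the existence of a single-lobe solution $\phi=\phi_{c,\alpha}$ on the whole half-line $c>1$ from the same reference.

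First, I would observe that $\mathcal{H}_{c,\alpha}=-c-D_\alpha-2\phi_{c,\alpha}$ is a self-adjoint operator in $L^2_{\mathrm{per}}(-\pi,\pi)$ whose resolvent is compact (since $D_\alpha$ on the torus has eigenvalues $|n|^\alpha\to\infty$ and $-c-2\phi_{c,\alpha}$ is a bounded perturbation). In particular the spectrum is purely discrete and accumulates only at $+\infty$. Since $c\mapsto\phi_{c,\alpha}$ is continuous from $(1,\infty)$ into $L^\infty_{\mathrm{per}}$, the operators $\{\mathcal{H}_{c,\alpha}\}_{c>1}$ form a continuous family in the norm-resolvent sense, and Theorem VII.1.7 in \cite{Kato} gives continuity of each eigenvalue as a function of $c$.

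Next, I would run the following topological argument: the integer-valued function $c\mapsto n_{-}(c):=\#\sigma_{-}(\mathcal{H}_{c,\alpha})$ (counted with multiplicity) can change only if an eigenvalue crosses zero as $c$ varies. But any such crossing would momentarily increase the dimension of $\mathrm{ker}(\mathcal{H}_{c,\alpha})$ beyond one, contradicting the non-degeneracy of the kernel provided by \cite{JH1}. Hence $n_{-}(c)$ is locally (and therefore globally) constant on the connected interval $(1,\infty)$. By Lemma \ref{lemma-2}, $n_{-}(c)=1$ for $c\gtrsim 1$, so $n_{-}(c)=1$ for every $c>1$. The same argument rules out the appearance or disappearance of the zero eigenvalue, so zero is simple for every $c>1$. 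All remaining eigenvalues are positive, and since $\sigma(\mathcal{H}_{c,\alpha})$ is discrete with $+\infty$ as its only accumulation point, they are bounded away from zero at each fixed $c$.

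The main obstacle is that the isospectrality argument is inherently global: I cannot simply perturb around $c=1$. What allows the global conclusion is the combination of (a) continuity of the full family $c\mapsto\phi_{c,\alpha}$ along the connected curve of single-lobe solutions supplied by \cite{JH1}, and (b) the global non-degeneracy $\mathrm{ker}(\mathcal{H}_{c,\alpha})=\mathrm{span}\{\phi'\}$ from the same reference, which is precisely what forbids an eigenvalue crossing at zero. This is why the restriction $\alpha\in(\alpha_0,2]$ is essential: $\alpha>\alpha_0$ is needed for the base case in Lemma \ref{lemma-2}, and $\alpha\leq 2$ is needed to apply the variational results of \cite{JH1} that underlie both the continuation of $\phi$ and the non-degeneracy of its kernel.
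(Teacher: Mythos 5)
Your proposal is correct and follows essentially the same route as the paper: a continuation argument in $c$ anchored at the small-amplitude base case of Lemma \ref{lemma-2}, with the global non-degeneracy $\mathrm{ker}(\mathcal{H}_{c,\alpha})=\mathrm{span}\{\phi'\}$ from Proposition 3.1 of \cite{JH1} ruling out any eigenvalue crossing through zero along the connected family of single-lobe solutions. The extra detail you supply on compact resolvent and norm-resolvent continuity is a correct elaboration of the same argument rather than a different one.
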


\begin{proof}
By Proposition 2.1 of \cite{JH1}, the single-lobe solution $\psi$ of the boundary-value problem
(\ref{ode-new}) exists for every $c > 1$ and $\alpha \in (1/3,2]$ and the solution
is a $C^1$ function of $c$ for $c > 1$. This result is extended to the single-lobe solution $\phi$ of the boundary-value
problem (\ref{ode}) thanks to the transformation (\ref{phi-psi}), where it is uniquely identified
with the small-amplitude periodic wave in Theorem \ref{lemma-small}.

By Proposition 3.1 of \cite{JH1}, the kernel of $\mathcal{H}_{c,\alpha}$
at the single-lobe solution $\phi \in H^{\alpha}_{\rm per}(-\pi,\pi)$
is simple with ${\rm ker}(\mathcal{H}_{c,\alpha}) = {\rm span}\{\phi'\}$
for every $c > 1$ and $\alpha \in (1/3,2]$.
The number of negative eigenvalues of $\mathcal{H}_{c,\alpha}$ may change
in the parameter continuations in $c$ if and only if the eigenvalues pass through zero.
By Lemma \ref{lemma-2}, $\sigma(\mathcal{H}_{c,\alpha})$ at the single-lobe solution $\phi$
in $L^2_{\rm per}(-\pi,\pi)$ consists of one simple negative eigenvalue, a simple zero eigenvalue,
and a countable sequence of positive eigenvalues bounded away from zero for $c \gtrsim 1$
if $\alpha > \alpha_0$. By the continuity argument and Proposition 3.1 of \cite{JH1},
the same remains true for $\sigma(\mathcal{H}_{c,\alpha})$ for every $c > 1$ 
and $\alpha \in (\alpha_0,2]$.
\end{proof}

\begin{remark}
For the KdV case with $\alpha = 2$, a different homotopy argument for the proof of isospectrality
of $\sigma(\mathcal{H}_{c,\alpha})$ can be developed, see, e.g., \cite{J0}, based on the classical results on
the non-degeneracy of the energy-to-period function in \cite{Schaaf} and \cite{GV}.
For the BO case with $\alpha = 1$, explicit computations based on complex analysis techniques
were developed much earlier in \cite{AT}.
\end{remark}

\subsection{Periodic waves in the KdV equation}
\label{sec-22}

For the KdV equation (see, e.g., Proposition 4.1 in \cite{HLP}),
the solution $\phi$ to the boundary-value problem (\ref{ode}) with $\alpha = 2$ is given by
\begin{equation}
\label{wave-explicit}
\phi(x) = \frac{2 K(k)^2}{\pi^2} \left[  1 - 2k^2 - \sqrt{1 - k^2 + k^4} + 3 k^2 {\rm cn}^2\left(\frac{K(k)}{\pi} x; k\right) \right]
\end{equation}
where ${\rm cn}$ is the Jacobi elliptic function, $K(k)$ is a complete elliptic integral of the first kind,
and $k \in (0,1)$ is the elliptic modulus that parameterizes the wave speed $c$ given by
\begin{equation}
\label{speed-explicit}
c = \frac{4 K(k)^2}{\pi^2} \sqrt{1 - k^2 + k^4}.
\end{equation}
The small-amplitude expansions (\ref{wave-expansion})--(\ref{speed-expansion}) is recovered
from (\ref{wave-explicit})--(\ref{speed-explicit}) with the wave amplitude
$a := 3k^2/4 + \mathcal{O}(k^4)$ as $k \to 0$.

We prove that the map $(0,1) \ni k \mapsto c \in (1,\infty)$ is strictly increasing,
hence the explicit solution (\ref{wave-explicit})--(\ref{speed-explicit}) exists for every $c > 1$ (see also \cite{NataliPava}).
We also extend the inequalities (\ref{wave-negative}) and (\ref{wave-assumption}) with $\alpha = 2$
for every $c > 1$.

\begin{lemma}
\label{lemma-kdv}
The map $(0,1) \ni k \mapsto c \in (1,\infty)$ for the solution (\ref{wave-explicit})--(\ref{speed-explicit})
is strictly increasing. In addition, for every $c > 1$, we have
\begin{equation}
\label{wave-negative-kdv}
\int_{-\pi}^{\pi} \phi^3 dx < 0, \quad \int_{-\pi}^{\pi} \phi (\phi')^2 dx < 0.
\end{equation}
\end{lemma}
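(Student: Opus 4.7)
The plan is to handle the three assertions---monotonicity of $c(k)$ and the two integral inequalities---by direct computation with the explicit cnoidal formula (\ref{wave-explicit})--(\ref{speed-explicit}).

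For the monotonicity, I differentiate (\ref{speed-explicit}) via the classical identity $\frac{dK}{dk} = \frac{E(k) - (1-k^2) K(k)}{k (1-k^2)}$. After algebraic simplification (multiplying through by $k(1-k^2)\sqrt{1 - k^2 + k^4}$) the claim $dc/dk > 0$ reduces to the elliptic-integral inequality
$$
F(k) := 2 E(k) (1 - k^2 + k^4) - K(k)(1 - k^2)(2 - k^2) > 0, \qquad k \in (0, 1).
$$
Both terms equal $\pi$ at $k = 0$, so $F(0) = 0$. The key step is the factorization
$$
F'(k) = 5 k \bigl[(2 k^2 - 1)\, E(k) + (1 - k^2)\, K(k)\bigr].
$$
The bracket is manifestly positive for $k \in [1/\sqrt{2}, 1)$, and on $(0, 1/\sqrt{2}]$ the needed inequality $(1-k^2) K(k) > (1 - 2 k^2) E(k)$ follows from $K(k)/E(k) > 1 > (1-2k^2)/(1-k^2)$ for $k > 0$, where the second bound uses only that $(1-2k^2)/(1-k^2)$ is strictly decreasing from $1$ at $k=0$ to $0$ at $k = 1/\sqrt{2}$. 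Hence $F$ is strictly increasing on $(0,1)$ and positive there, which establishes the monotonicity and, via the range of $c$ at $k = 0^+$ and $k = 1^-$, also the fact that $c$ attains every value in $(1, \infty)$.

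For the integral inequalities I use the ODE satisfied by $\phi$ with $\alpha = 2$, namely $\phi'' + c \phi + \phi^2 = 0$, together with its first integral $(\phi')^2 = 2 E_0 - c \phi^2 - \tfrac{2}{3}\phi^3$. Multiplying the ODE by $\phi$ and integrating over one period gives
$$
\int_{-\pi}^{\pi} \phi^3\, dx = \int_{-\pi}^{\pi} (\phi')^2\, dx - c \int_{-\pi}^{\pi} \phi^2\, dx,
$$
and multiplying the first integral by $\phi$ and integrating expresses $\int_{-\pi}^{\pi} \phi (\phi')^2\, dx$ as a linear combination of $\int \phi^n\, dx$ for $n = 1, 3, 4$, with coefficients involving $c$ and $E_0$. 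Substituting (\ref{wave-explicit}) reduces each $\int \phi^n\, dx$ to a polynomial in $\int_{-K}^{K} {\rm cn}^{2m}(u; k)\, du$ for $m \leq 3$, and these integrals admit closed-form expressions in $K(k)$ and $E(k)$ via the standard Byrd--Friedman reduction formulas.

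The main obstacle is verifying the sign of the resulting elliptic expressions uniformly on $(0, 1)$. Since $\alpha = 2 > \alpha_1$, Lemma \ref{proposition-negative} already delivers strict negativity of both integrals for $k \ll 1$, so it suffices to show that they do not vanish on $(0, 1)$. I expect each non-vanishing statement to reduce, after clearing manifestly positive prefactors, to an elliptic inequality of the same flavour as $F(k) > 0$, amenable to the same derivative-factorization-plus-$E/K$-monotonicity argument; in particular, the reduction of $\int \phi^3\, dx$ to $\int (\phi')^2\, dx - c \int \phi^2\, dx$ only involves the same two building blocks already controlled by closed-form expressions in $K, E$. An alternative route, sidestepping $\int \phi^4\, dx$, is to multiply the ODE by $\phi''$ and integrate to derive $2 \int \phi (\phi')^2\, dx = \int (\phi'')^2\, dx - c \int (\phi')^2\, dx$, which reduces the second claim to a single inequality of the same kind.
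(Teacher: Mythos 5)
Your treatment of the monotonicity of $k \mapsto c$ is correct and coincides with the paper's own argument: your $F(k)$ is exactly the quantity $2(1-k^2+k^4)E(k) - (2-3k^2+k^4)K(k)$ appearing there, and the factorization of $F'(k)$ together with $K(k)>E(k)$ settles its sign. The difficulty is with the two integral inequalities, which carry the real content of the lemma: your argument does not establish them. You reduce $\int_{-\pi}^{\pi}\phi^3\,dx<0$ to $\int(\phi')^2\,dx < c\int\phi^2\,dx$, and $\int_{-\pi}^{\pi}\phi(\phi')^2\,dx<0$ to $\int(\phi'')^2\,dx < c\int(\phi')^2\,dx$, and then state that you ``expect'' the resulting elliptic expressions to yield to the same derivative-factorization method. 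That expectation is not a proof, and the second reduction is actually misleading: substituting $\phi''=-c\phi-\phi^2$ and using your first identity gives $\int(\phi'')^2\,dx - c\int(\phi')^2\,dx = c\int\phi^3\,dx + \int\phi^4\,dx$, a difference of two terms of opposite sign (granting the first inequality), so it is not ``a single inequality of the same kind'' and there is no reason to believe the $E/K$ monotonicity trick applies to it. The Byrd--Friedman route through $\int{\rm cn}^{2m}$ could in principle work, but the sign verification on all of $k\in(0,1)$ is precisely the hard part, and it is left undone; likewise, the ``negative for small $k$ plus non-vanishing'' framing is sound in principle, but non-vanishing is exactly what must be proved.

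Both inequalities admit short proofs that bypass elliptic integrals entirely, and this is what the paper does. For the cubic integral, differentiate $(c+D_{\alpha=2})\phi+\phi^2=0$ with respect to $k$, pair the result with $\phi$, and use self-adjointness together with the equation itself to obtain $\int_{-\pi}^{\pi}\phi^2\,\partial_k\phi\,dx = -\frac{dc}{dk}\int_{-\pi}^{\pi}\phi^2\,dx<0$; hence $k\mapsto\int_{-\pi}^{\pi}\phi^3\,dx$ is strictly decreasing from its value $0$ at $k=0$, using the monotonicity you have already proved. For the second integral, write $\phi=-\frac{1}{c}(\phi''+\phi^2)$ and note that $\int_{-\pi}^{\pi}(\phi')^2\phi''\,dx=\frac13\int_{-\pi}^{\pi}\frac{d}{dx}(\phi')^3\,dx=0$ over a period, so that $\int_{-\pi}^{\pi}\phi(\phi')^2\,dx=-\frac{1}{c}\int_{-\pi}^{\pi}\phi^2(\phi')^2\,dx<0$ immediately. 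As written, your proposal proves the first assertion of the lemma and leaves the other two as conjectures.
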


\begin{proof}
We have $\phi = 0$ and $c = 1$ at $k = 0$. Thanks to the smoothness of $\phi$ and $c$ in $k$,
it holds from (\ref{speed-explicit}) by explicit differentiation:
$$
\frac{\pi^2 \sqrt{1-k^2+k^4}}{4 K(k)} \frac{dc}{dk} = 2(1-k^2+k^4) \frac{d K(k)}{dk} - k (1 - 2k^2) K(k).
$$
By using the differential relation,
$$
\frac{d K(k)}{dk} = \frac{E(k) - (1-k^2) K(k)}{k (1-k^2)},
$$
the previous expression can be reduced to the form
$$
\frac{\pi^2 k (1-k^2) \sqrt{1-k^2+k^4}}{4 K(k)} \frac{dc}{dk} = 2(1-k^2+k^4) E(k) - (2-3k^2 + k^4) K(k) =: I(k),
$$
where $E(k)$ is a complete elliptic integral of the second kind and $I(k)$ is introduced for convenience.
Note that $I(0) = 0$. We claim that the map $(0,1) \ni k \mapsto I$ is strictly increasing.
Indeed, by using the differential relation
$$
\frac{d E(k)}{dk} = \frac{E(k) - K(k)}{k},
$$
we obtain after straightforward computations
$$
\frac{dI(k)}{dk} = 5 k \left[ (1-k^2) K(k) - (1-2k^2) E(k) \right] > 0,
$$
where the last inequality follows from the fact that $K(k) > E(k)$ for every $k \in (0,1)$.
Since $I(0) = 0$, we have $I(k) > 0$ for every $k \in (0,1)$, which implies that
$\frac{dc}{dk} > 0$ for every $k \in (0,1)$.

Let us now prove the inequalities (\ref{wave-negative-kdv}) for every $c > 1$.
Since $\phi$ and $c$ are smooth in $k$, we differentiate the nonlinear equation
in the boundary-value problem (\ref{ode}) with $\alpha = 2$ in $k$ and obtain
\begin{equation*}
\left[ c + D_{\alpha = 2} + 2 \phi \right] \frac{\partial \phi}{\partial k} + \frac{dc}{dk} \phi = 0.
\end{equation*}
Multiplying this equation by $\phi$ and integrating on $[-\pi,\pi]$
imply that
\begin{equation*}
\int_{-\pi}^{\pi} \phi^2 \frac{\partial \phi}{\partial k} dx = - \frac{dc}{dk} \int_{-\pi}^{\pi} \phi^2 dx,
\end{equation*}
where we have used the facts that $D_{\alpha = 2}$ is self-adjoint in $L^2_{\rm per}(-\pi,\pi)$ and
$\phi, \partial_a \phi \in H^{\alpha = 2}_{\rm per}(-\pi,\pi)$.
Since $\frac{dc}{dk} > 0$ for every $k \in (0,1)$, the map $k \mapsto \int_{-\pi}^{\pi} \phi^3 dx$
is strictly decreasing with $\int_{-\pi}^{\pi} \phi^3 dx = 0$ at $k = 0$.
Therefore, $\int_{-\pi}^{\pi} \phi^3 dx < 0$ for $k \in (0,1)$ by the continuity argument in $k$.

Finally, the inequality $\int_{-\pi}^{\pi} \phi (\phi')^2 dx < 0$ for every $c > 1$
follows from the boundary-value problem (\ref{ode}) with $\alpha = 2$:
$$
\int_{-\pi}^{\pi} \phi (\phi')^2 dx = -\frac{1}{c} \left[ \int_{-\pi}^{\pi} (\phi')^2 \phi'' dx +
\int_{-\pi}^{\pi} \phi^2 (\phi')^2 dx \right],
$$
where the first term in the right-hand side is zero thanks to the smoothness of $\phi$.
\end{proof}

\subsection{Periodic waves in the BO equation}
\label{sec-23}

For the BO equation (see, e.g., \cite{Matsuno}),
the solution $\phi$ to the boundary-value problem (\ref{ode}) with $\alpha = 1$ is given by
\begin{equation}
\label{Sol-Ben-Ono}
\phi(x) = \frac{\cosh \gamma \cos x - 1}{\sinh \gamma (\cosh \gamma - \cos x)}, \quad c = \coth \gamma.
\end{equation}
The small-amplitude expansions (\ref{wave-expansion})--(\ref{speed-expansion}) is recovered
from (\ref{Sol-Ben-Ono}) with the wave amplitude $a := 2 e^{-\gamma} + \mathcal{O}(e^{-3\gamma})$
as $\gamma \to \infty$. It follows from the simple expression $c = \coth \gamma$ that
the map $(0,\infty) \ni \gamma \mapsto c \in (1,\infty)$ is strictly decreasing,
hence the explicit solution (\ref{Sol-Ben-Ono}) exists for every $c > 1$.

Let us now show that the inequalities (\ref{wave-negative}) and (\ref{wave-assumption}) with $\alpha = 1$
holds for every $c > 1$, that is,
\begin{equation}
\label{wave-negative-bo}
\int_{-\pi}^{\pi} \phi^3 dx < 0, \quad \int_{-\pi}^{\pi} \phi (\phi')^2 dx > 0.
\end{equation}
Indeed, by using the explicit formula (\ref{Sol-Ben-Ono}) and symbolic computations with Wolfram's MATHEMATICA,
we obtain
$$
\int_{-\pi}^{\pi} \phi^3 dx = -\pi (c-1)^2 (2c + 1)
$$
and
$$
\int_{-\pi}^{\pi} \phi (\phi')^2 dx = \frac{1}{4} \pi (c^2-1)^2,
$$
from which the inequalities (\ref{wave-negative-bo}) hold for every $c > 1$.

\subsection{Positivity of the periodic waves}
\label{sec-24}

The following result states that the single-lobe wave profile $\psi$ in the boundary-value problem (\ref{ode-new})
for every $c > 1$ and $\alpha \in (\alpha_0,2]$ is positive and satisfies $\psi(x) \geq \psi(\pm \pi) > 0$ for every $x \in [-\pi,\pi]$.
The result has not appeared in the literature, e.g. a remark in the proof of Proposition 2.1 in \cite{JH1} states that
a periodic solution need not be positive everywhere. On the other hand, positivity of the Fourier
coefficients in the Fourier series for the periodic wave $\psi$ is proven in Theorem 3.5 of \cite{C}
for every $\alpha > 1/2$ and for sufficiently large periods (which is equivalent to $c > 1$ at 
the $2\pi$-period).

Our proof has similarity to the work of \cite{Torres} on the second-order differential equations. 
However, the existence of constant solutions is eliminated in \cite{Torres} by the space-dependent 
coefficients in the boundary-value problem. For the problem (\ref{ode-new}), we have to use 
the Leray--Schauder index to single out single-lobe periodic solutions from the constant solutions. 

\begin{theorem}
\label{lemma-positive}
For every $c > 1$ and $\alpha \in (\alpha_0,2]$, there exists a unique single-lobe
solution $\psi$ of the boundary-value problem (\ref{ode-new}) such that
$\psi(x) > 0$ for every $x \in [-\pi,\pi]$.
\end{theorem}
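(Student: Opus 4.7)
The plan is to reformulate (\ref{ode-new}) as the fixed-point equation $\psi = G_c \ast \psi^2$, where $G_c$ is the convolution kernel of $\tilde{\mathcal{L}}_{c,\alpha}^{-1} = (c-D_\alpha)^{-1}$ on $[-\pi,\pi]$, and then to exploit strict positivity of $G_c$. Since $-D_\alpha$ is a nonnegative self-adjoint operator with Fourier symbol $|n|^\alpha$, the semigroup $e^{tD_\alpha}$ coincides with the periodization of the transition density of a symmetric $\alpha$-stable process on $\mathbb{R}$, which is strictly positive for every $t > 0$ and every $\alpha \in (0,2]$; periodization preserves this. The subordination identity
\begin{equation*}
G_c \;=\; \int_0^{\infty} e^{-ct}\, e^{tD_\alpha}\, dt,
\end{equation*}
which converges for every $c > 0$, then produces a strictly positive kernel $G_c(x-y) > 0$ on the torus.

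Next, I would characterize the single-lobe solution as a non-constant fixed point of $\mathcal{F}(\psi) := G_c \ast \psi^2$ inside a cone $\mathcal{C}$ of even, nonnegative functions in $H^\alpha_{\rm per}(-\pi,\pi)$. Existence of such a $\psi$ for every $c > 1$ and $\alpha \in (\alpha_0,2]$ is already available by combining Theorem \ref{lemma-small}, Lemma \ref{lemma-11}, and Proposition 2.1 of \cite{JH1}; however, $\mathcal{C}$ also contains the trivial fixed point $\psi \equiv 0$ and the constant fixed point $\psi \equiv c$, and the central task is to separate the single-lobe solution from these. Adapting the strategy of \cite{Torres} to this nonlocal setting, I would compute the Leray--Schauder index of $\mathcal{F}$ at each of the two constant fixed points within $\mathcal{C}$. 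At $\psi \equiv c$, the Fr\'echet derivative of $\mathcal{F}$ is the compact operator $2c\,G_c$, whose eigenvalues $\{2c/(c+|n|^{\alpha})\}_{n \in \mathbb{Z}}$ are read off directly from (\ref{spectrum-L}); for $c > 1$ several exceed $1$, so the local index at $\psi \equiv c$ differs from the index at infinity. A cone-preserving homotopy argument then forces the existence of a third fixed point in $\mathcal{C}$, necessarily non-constant, which coincides with the single-lobe $\psi$.

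Positivity is then immediate: since $\psi \not\equiv 0$ and $\psi \in \mathcal{C}$, we have $\psi^2 \geq 0$ with $\psi^2 \not\equiv 0$, so strict positivity of $G_c$ gives
\begin{equation*}
\psi(x) \;=\; \int_{-\pi}^{\pi} G_c(x-y)\, \psi(y)^2 \, dy \;>\; 0 \quad \text{for every } x \in [-\pi,\pi],
\end{equation*}
and the single-lobe normalization $\min \psi = \psi(\pm\pi)$ upgrades this to the sharper bound $\psi(x) \geq \psi(\pm\pi) > 0$. The main obstacle I expect is the Leray--Schauder computation rather than the positivity of $G_c$: one must verify compactness and cone-invariance of $\mathcal{F}$ on $\mathcal{C}$, check that $1$ is not an eigenvalue of $2c\,G_c$ on the even subspace (so the index at $\psi \equiv c$ is well defined), and control the behavior of the homotopy so that no branch escapes $\mathcal{C}$. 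Uniqueness of the single-lobe solution inside $\mathcal{C}$ then follows from the non-degeneracy of the kernel of $\tilde{\mathcal{H}}_{c,\alpha}$ furnished by Lemma \ref{lemma-11} together with the analytic continuation of Theorem \ref{lemma-small}.
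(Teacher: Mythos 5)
Your proposal is correct and follows essentially the same route as the paper: reformulate (\ref{ode-new}) as a fixed-point problem for $\psi \mapsto (c-D_{\alpha})^{-1}\psi^2$ with a strictly positive Green function, work in a positive cone, and use the Leray--Schauder index of the constant fixed point $\psi \equiv c$ (whose linearization $2c(c-D_{\alpha})^{-1}$ has an increasing number of eigenvalues outside the unit disk as $c$ crosses $\{K^{\alpha}\}_{K\in\mathbb{N}}$) against the index of the non-constant branch (which by Lemma \ref{lemma-11} stays fixed) to keep the single-lobe solution separated from the constants for all $c>1$. The only real divergence is your self-contained subordination proof of $G_{c,\alpha}>0$ via the periodized symmetric $\alpha$-stable heat kernel, where the paper instead cites \cite{Nieto} and \cite{Bai}; this is a clean alternative that also delivers the uniform lower bound $G_{c,\alpha}\geq m_{c,\alpha}>0$ needed to define the cone.
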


\begin{proof}
For $c \gtrsim 1$, the assertion of the lemma follows from Theorem \ref{lemma-small} thanks
to the transformation (\ref{phi-psi}) and smallness of $a$ in the Stokes expansion (\ref{wave-expansion}).
In order to prove the same for every $c > 1$, we introduce the Green function $G_{c,\alpha} \in L^2_{\rm per}(-\pi,\pi)$
for the positive operator $(c - D_{\alpha})$ from solution $\varphi(x) = \int_{-\pi}^{\pi} G_{c,\alpha}(x-s) h(s) ds$
of the linear inhomogeneous equation
\begin{equation}
\label{G-definition}
(c - D_{\alpha}) \varphi = h, \quad h \in L^2_{\rm per}(-\pi,\pi).
\end{equation}
By Fourier series, the solution for $G$ is available in the Fourier series form:
\begin{equation}
\label{G-fourier}
G_{c,\alpha}(x) = \frac{1}{2\pi} \sum_{n \in \mathbb{Z}} \frac{e^{inx}}{c + |n|^{\alpha}},
\end{equation}
from which it follows that $G_{c,\alpha} \in L^2_{\rm per}(-\pi,\pi)$ if $\alpha > 1/2$ but
$G_{c,\alpha}(0) = \infty$ if $\alpha \leq 1$. It is proven in \cite{Nieto} for $\alpha \in (0,1)$
(and the proof is extended for $\alpha \in [1,2]$, see \cite{Bai})
that there is a positive $(c,\alpha)$-dependent constant $m_{c,\alpha}$ such that
\begin{equation}
\label{G-positive}
G_{c,\alpha}(x) \geq m_{c,\alpha}, \quad x \in [-\pi,\pi].
\end{equation}
In addition, for $\alpha > 1/2$, $M_{c,\alpha} := \| G_{c,\alpha} \|_{L^2_{\rm per}}$
for a positive $(c,\alpha)$-dependent constant $M_{c,\alpha}$.

Let us consider a positive cone in the space of $L^2_{\rm per}(-\pi,\pi)$-functions defined by
\begin{equation}
P_{c,\alpha} := \left\{ \psi \in L^2_{\rm per}(-\pi,\pi) : \quad
\psi(x) \geq \frac{m_{c,\alpha}}{M_{c,\alpha}} \| \psi \|_{L^2_{\rm per}}, \;\; x \in [-\pi,\pi] \right\}.
\label{cone}
\end{equation}
Define the following nonlinear operator $A_{c,\alpha}(\psi) : L^2_{\rm per}(-\pi,\pi) \mapsto L^2_{\rm per}(-\pi,\pi)$
for any $c > 0$:
\begin{equation}
A_{c,\alpha}(\psi) := (c-D_{\alpha})^{-1} \psi^2 \quad \Rightarrow \quad
A_{c,\alpha}(\psi)(x) = \int_{-\pi}^{\pi} G_{c,\alpha}(x-s) \psi(s)^2 ds.
\label{fixed-point}
\end{equation}

The operator $A_{c,\alpha}$ is bounded and continuous in $L^2_{\rm per}(-\pi,\pi)$ thanks
to the generalized Young inequality:
\begin{equation}
\label{A-estimate1}
\| A_{c,\alpha}(\psi) \|_{L^2_{\rm per}} \leq \| G_{c,\alpha} \|_{L^2_{\rm per}} \| \psi^2 \|_{L^1_{\rm per}}
\leq M_{c,\alpha} \| \psi \|_{L^2_{\rm per}}^2.
\end{equation}
Moreover, $A_{c,\alpha}$ is compact because it is the limit of compact operators $A^{(N)}_{c,\alpha}$
given by the first $2N+1$ Fourier coefficients. Indeed, we have
\begin{eqnarray*}
\| A_{c,\alpha}(\psi) - A_{c,\alpha}^{(N)}(\psi) \|^2_{L^2_{\rm per}}
& = & \frac{1}{2\pi} \sum_{|n| > N} \frac{|(\psi^2)_n|^2}{(c+|n|^{\alpha})^2} \leq
\frac{1}{2\pi} \| (\psi^2)_n \|_{\ell^{\infty}}^2 \sum_{|n| > N} \frac{1}{(c+|n|^{\alpha})^2} \\
& \leq & \frac{1}{2\pi} \|\psi^2\|_{L^1_{\rm per}}^2 \sum_{|n| > N} \frac{1}{(c+|n|^{\alpha})^2}
= \frac{1}{2\pi} \|\psi\|_{L^2_{\rm per}}^4 \sum_{|n| > N} \frac{1}{(c+|n|^{\alpha})^2},
\end{eqnarray*}
where the numerical series converges for every $\alpha > 1/2$. Therefore,
for every $\psi \in L^2_{\rm per}(-\pi,\pi)$,
$$
\lim_{N \to \infty} \| A_{c,\alpha}(\psi) - A_{c,\alpha}^{(N)}(\psi) \|_{L^2_{\rm per}} = 0,
$$
so that $A_{c,\alpha}$ maps bounded sets in $L^2_{\rm per}(-\pi,\pi)$ to pre-compact sets in $L^2_{\rm per}(-\pi,\pi)$.

By using positivity of the Green function in (\ref{G-positive}), we confirm that
the operator $A_{c,\alpha}(\psi)$ is closed in $P_{c,\alpha} \subset L^2_{\rm per}(-\pi,\pi)$:
\begin{equation}
\label{A-estimate2}
A_{c,\alpha}(\psi)(x) \geq m_{c,\alpha} \| \psi \|_{L^2_{\rm per}}^2 \geq \frac{m_{c,\alpha}}{M_{c,\alpha}} \| A_{c,\alpha}(\psi) \|_{L^2_{\rm per}}.
\end{equation}
A fixed point $\psi$ of $A_{c,\alpha}(\psi)$ in $P_{c,\alpha} \subset L^2_{\rm per}(-\pi,\pi)$
corresponds to the positive function $\psi$ such that $\psi(x) > 0$ for every $x \in [-\pi,\pi]$.

Let $B_r := \{ \psi \in L^2_{\rm per}(-\pi,\pi) :\;\; \| \psi \|_{L^2_{\rm per}} < r\}$ be a ball
of radius $r$ in $L^2_{\rm per}(-\pi,\pi)$. The existence of a fixed point of $A_{c,\alpha}(\psi)$
in $P_{c,\alpha} \cap (\bar{B}_{r_+} \backslash B_{r_-})$ for some $0 < r_- < r_+ < \infty$
follows from Krasnoselskii's fixed-point theorem (see, e.g., Corollary 20.1 in \cite{Var-Book})
if there exist $r_-$ and $r_+$ such that
\begin{equation}
\label{cone-condition-1}
\| A_{c,\alpha}(\psi) \|_{L^2_{\rm per}} < \| \psi \|_{L^2_{\rm per}}, \quad \psi \in P_{c,\alpha} \cap \partial B_{r_-}
\end{equation}
and
\begin{equation}
\label{cone-condition-2}
\| A_{c,\alpha}(\psi) \|_{L^2_{\rm per}} > \| \psi \|_{L^2_{\rm per}}, \quad \psi \in P_{c,\alpha} \cap \partial B_{r_+}.
\end{equation}
Bound (\ref{cone-condition-1}) follows from (\ref{A-estimate1}) with $M_{c,\alpha} r_- < 1$.
Bound (\ref{cone-condition-2}) follows from (\ref{A-estimate2}) with $\sqrt{2\pi} m_{c,\alpha} r_+ > 1$,
hence the two radii satisfy the constraints
\begin{equation}
\label{r-minus-r-plus}
0 < r_- < \frac{1}{M_{c,\alpha}} \leq \frac{1}{\sqrt{2\pi} m_{c,\alpha}} < r_+ < \infty,
\end{equation}
where $\sqrt{2\pi} m_{c,\alpha} \leq M_{c,\alpha}$ follows from (\ref{G-positive}). 
Hence, there exists a fixed point of $A_{c,\alpha}(\psi)$ in $P_{c,\alpha} \cap (\bar{B}_{r_+} \backslash B_{r_-})$.

We use bootstrapping arguments similar to those used in the proof of Proposition 2.1 in \cite{JH1}
and show that the fixed point of $A_{c,\alpha}$ in $L^2_{\rm per}(-\pi,\pi)$ also exists in $H^{\alpha}_{\rm per}(-\pi,\pi)$,
hence $\psi$ is a positive solution of the boundary-value problem (\ref{ode-new}). Indeed,
if $\psi \in L^4_{\rm per}(-\pi,\pi)$, then $\psi \in H^{\alpha}_{\rm per}(-\pi,\pi)$ thanks to the estimate:
$$
\| D_{\alpha} \psi \|_{L^2_{\rm per}} = \| D_{\alpha} (c - D_{\alpha})^{-1} \psi^2 \|_{L^2_{\rm per}}
\leq \| \psi^2 \|_{L^2_{\rm per}} = \| \psi \|_{L^4_{\rm per}}^2.
$$
In order to show that $\psi \in L^4_{\rm per}(-\pi,\pi)$, we use the generalized Young and
H\"{o}lder inequalities:
\begin{eqnarray}
\label{ineq-1}
\| \psi \|_{L^r_{\rm per}} & \leq & \| G \|_{L^p_{\rm per}} \| \psi^2 \|_{L^q_{\rm per}}, \quad
\quad \quad \quad 1 + \frac{1}{r} = \frac{1}{p} + \frac{1}{q}, \quad p,q,r \geq 1,\\
\label{ineq-2}
& \leq & \| G \|_{L^p_{\rm per}} \| \psi \|_{L^{sq}_{\rm per}} \| \psi \|_{L^{sq/(s-1)}_{\rm per}}, \quad s \geq 1.
\end{eqnarray}
By using the Hausdorff--Young inequality
$$
\| G \|_{L^p_{\rm per}} \leq C_p \| (c + |n|^{\alpha})^{-1} \|_{\ell^{p/(p-1)}}, \quad p \geq 2,
$$
we can see that $\| G \|_{L^p_{\rm per}} < \infty$ if $\alpha p/(p-1) > 1$.
If $\alpha \geq 1$, then $G \in L^p_{\rm per}(-\pi,\pi)$ for every $p \in [2,\infty)$.
Applying (\ref{ineq-1}) with $r = p$ and $q = 1$,
we have $\psi \in L^p_{\rm per}(-\pi,\pi)$ for every $p \in [2,\infty)$.

If $\alpha \in (\alpha_0,1)$, we set $p_0 = 1/(1-\alpha_0) > 2$
and obtain with the same argument that $G, \psi \in L^{p_0}_{\rm per}(-\pi,\pi)$.
Then, using bound (\ref{ineq-2}) with $sq = 2$ and $s q/(s-1) = p_0$,
that is, with $s = 1 + 2/p_0$ and $q = 2p_0/(2+p_0)$,
we obtain $\psi \in L^r_{\rm per}(-\pi,\pi)$ with $r = 2p_0/(4-p_0) > p_0$ (because $p_0 > 2$).
Iterating bound (\ref{ineq-2}) with $sq = 2$ and $s q/(s-1) = r$,
we obtain a bigger value for $r = p_0/(3-p_0) > 2p_0/(4-p_0)$, hence by further iterations,
we get $\psi \in L^p_{\rm per}(-\pi,\pi)$ for every $p \in [2,\infty)$ including $p = 4$.

The fixed point $\psi \in P_{c,\alpha} \cap (\bar{B}_{r_+} \backslash B_{r_-})$ for $r_- < r_+$ satisfying
(\ref{r-minus-r-plus}) exists for every $c > 0$. However, the constant periodic
solution 
\begin{equation}
\label{wave-constant}
\psi_c(x) = c, \quad x \in [-\pi,\pi]
\end{equation}
is a fixed point of $A_{c,\alpha}$ in $P_{c,\alpha} \cap (\bar{B}_{r_+} \backslash B_{r_-})$ for every $c > 0$ and $\alpha > 0$.
Indeed, $A_{c,\alpha}(\psi_c) = \psi_c$ for every
$\alpha > 0$ and $\psi_c \in P_{c,\alpha} \cap (\bar{B}_{r_+} \backslash B_{r_-})$ for every $c > 0$
thanks to the condition $\sqrt{2\pi} m_{c,\alpha} \leq M_{c,\alpha}$. In order to be able to claim
that there exists a non-trivial fixed point $\psi \in P_{c,\alpha} \cap (\bar{B}_{r_+} \backslash B_{r_-})$
for $c > 1$ in addition to the constant fixed point $\psi_c$, we look at the Leray--Schauder index of
the fixed point in the subspace of even functions in $L^2_{\rm per}(-\pi,\pi)$, defined
as $(-1)^N$, where $N$ is the number of unstable eigenvalues of $A_{c,\alpha}'(\psi)$ outside the unit disk
with the account of their multiplicities.

For the fixed point $\psi_c$ in (\ref{wave-constant}), we have $A_{c,\alpha}'(\psi_c) = 2c (c - D_{\alpha})^{-1}$,
hence there exists $N = K + 1$ unstable eigenvalues of $A_{c,\alpha}'(\psi_c)$
outside the unit disk for every $c \in (K^{\alpha},(K+1)^{\alpha})$, where $K \in \mathbb{N}$.
Therefore, the index of $\psi_c$ changes sign every time $c$ crosses 
values in the set $\{ K^{\alpha} \}_{K \in \mathbb{N}}$,
as is shown on Figure \ref{fig-bif}.
On the other hand, for $K = 1$, $c = 1$ is a bifurcation value by Theorem \ref{lemma-small}
and two non-trivial fixed points $\psi \in P_{c,\alpha} \cap (\bar{B}_{r_+} \backslash B_{r_-})$
bifurcate for $c \gtrsim 1$ if $\alpha > \alpha_0$, one is single-lobe with maximum at $x = 0$ and
the other one is single-lobe with minimum at $x = 0$, both are strictly positive.
For the non-trivial fixed points $\psi$, we have 
$$
A_{c,\alpha}'(\psi) = 2 (c - D_{\alpha})^{-1} \psi =
{\rm Id} - (c-D_{\alpha})^{-1} \tilde{\mathcal{H}}_{c,\alpha}, 
$$
where it follows from
positivity of $\psi$ that $A_{c,\alpha}'(\psi) \geq 0$. By Lemma \ref{lemma-11} for $c > 1$ and $\alpha \in (\alpha_0,2]$,
$\tilde{\mathcal{H}}_{c,\alpha} = \mathcal{H}_{c,\alpha}$ has only one simple negative eigenvalue,
hence there exists $N = 1$ unstable eigenvalues of $A_{c,\alpha}'(\psi)$.
Therefore, the pair of non-trivial fixed points $\psi \in P_{c,\alpha} \cap (\bar{B}_{r_+} \backslash B_{r_-})$
is distinct from the constant fixed point $\psi_c$ for every $c > 1$,
as is shown on Figure \ref{fig-bif}.

\begin{figure}[h]
\includegraphics[width=0.8\linewidth]{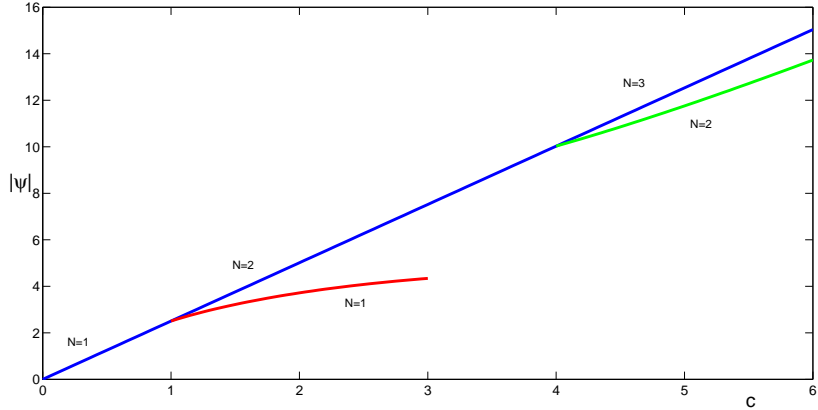}
\caption{Schematic representation of the constant fixed point $\psi_c$
and pairs of non-trivial fixed points on the $(c,\| \psi \|_{L^2_{\rm per}})$ plane
for $\alpha = 2$.}
\label{fig-bif}
\end{figure}

The pair of non-trivial fixed points for the single-lobe solution 
remains inside $P_{c,\alpha} \cap (\bar{B}_{r_+} \backslash B_{r_-})$
in continuation of the solution family in $c$ for a fixed $\alpha \in (\alpha_0,2]$,
thanks to the conditions (\ref{cone-condition-1}),(\ref{cone-condition-2}), and (\ref{r-minus-r-plus}).
Their indices also remain invariant with respect to $c$ thanks to Lemma \ref{lemma-11}.
Therefore, these fixed points cannot coalesce with any other fixed points
of $A_{c,\alpha}$ in $P_{c,\alpha} \cap (\bar{B}_{r_+} \backslash B_{r_-})$. 
By continuity, these fixed points coincide with the single-lobe solutions, 
existence of which is proven in Proposition 2.1 in \cite{JH1}. 
\end{proof}

\begin{remark}
At every bifurcation point $c = K^{\alpha}$ with $K \geq 2$,
a pair of additional fixed points of $A_{c,\alpha}$ bifurcates in $P_{c,\alpha} \cap (\bar{B}_{r_+} \backslash B_{r_-})$,
as is shown on Figure \ref{fig-bif} for $K = 2$ and $\alpha = 2$. These fixed points are not single-lobe solutions for $K \geq 2$
but instead these are concatenations of the single-lobe solutions with $K$ periods on $[-\pi,\pi]$.
\end{remark}

\begin{remark}
\label{remark-isospectrality}
Theorem 4.1 in \cite{NataliPava} states that $\tilde{\mathcal{H}}_{c,\alpha} = \mathcal{H}_{c,\alpha}$ in (\ref{Jacobian-new})
has only one simple negative eigenvalue and a simple zero eigenvalue if $\psi$ and its Fourier transform are strictly positive.
These properties have been verified in \cite{NataliPava} for the integrable cases $\alpha = 2$ and $\alpha = 1$,
for which the exact solutions (\ref{wave-KdV}) and (\ref{wave-BO}) are available.
With Theorem 3.5 in \cite{C} and Theorem \ref{lemma-positive} above, Theorem 4.1 in \cite{NataliPava}
can be applied to the periodic waves for every $c > 1$ and $\alpha \in (\alpha_0,2]$.
This argument gives an alternative proof of Lemma \ref{lemma-11}.
\end{remark}

Let us illustrate positivity of $\psi$ for the classical cases $\alpha = 2$ and $\alpha = 1$.
For the KdV equation with the solution (\ref{wave-explicit}) and (\ref{speed-explicit}), we
use $\psi(x) = c + \phi(x)$ and obtain
\begin{equation}
\label{wave-KdV}
\psi(x) = \frac{2 K(k)^2}{\pi^2} \left[  1 - 2k^2 + \sqrt{1 - k^2 + k^4} + 3 k^2 {\rm cn}^2\left(\frac{K(k)}{\pi} x; k\right) \right],
\end{equation}
from which $\psi(x) \geq \psi(\pm \pi) > 0$ holds for every $x \in [-\pi,\pi]$ and every $k \in (0,1)$.
Indeed, if $\alpha = 2$, the boundary-value problem (\ref{ode-new}) can be formulated as a planar Hamiltonian
system on the phase plane $(\psi,\psi')$ and a set of closed orbits for periodic solutions is located
on the phase plane between the saddle point $(0,0)$ and the center point $(c,0)$, hence, $\psi(x) > 0$ for every $x \in [-\pi,\pi]$.

For the BO equation with the solution (\ref{Sol-Ben-Ono}), we use $\psi(x) = c + \phi(x)$ and obtain
\begin{equation}
\label{wave-BO}
\psi(x) = \frac{\sinh \gamma}{\cosh \gamma - \cos x},
\end{equation}
from which $\psi(x) \geq \psi(\pm \pi) = \tanh \gamma > 0$ holds for every $x \in [-\pi,\pi]$ and every $\gamma \in (0,\infty)$.

\section{Proof of Theorem \ref{theorem-main-1}}
\label{sec-3}

In what follows, we always use $\phi$ to denote the single-lobe periodic wave, which is even with a maximum at
$x = 0$ and minimum at $x = \pm \pi$. We always assume that
\begin{equation}
\label{assumption-on-phi}
\int_{-\pi}^{\pi} \phi^3 dx \neq 0 \quad \mbox{\rm and} \quad
\int_{-\pi}^{\pi} \phi (\phi')^2 dx \neq 0.
\end{equation}
Recall that although $\phi \in H^{\alpha}_{\rm per}(-\pi,\pi)$,
it is extended to $\phi \in H^{\infty}_{\rm per}(-\pi,\pi)$ by
bootstrapping arguments similar to those used in the proof of Theorem \ref{lemma-positive}.

Linearizing $T_{c,\alpha}$ at $\phi$ with $w_n = \phi + \omega_n$, where $\omega_n \in H^{\alpha}_{\rm per}(-\pi,\pi)$,
yields the linearized iterative rule:
\begin{equation}
\label{lin-T}
\omega_{n+1} = - \frac{2 \langle \mathcal{L}_{c,\alpha} \phi, \omega_n \rangle}{\langle \mathcal{L}_{c,\alpha} \phi, \phi \rangle} \phi +
\mathcal{L}_{c,\alpha}^{-1} (2 \phi \omega_n), \quad n \in \mathbb{N}.
\end{equation}
Since $\mathcal{L}_{c,\alpha}^{-1} (\phi^2) = \phi$ and $\mathcal{L}_{c,\alpha}^{-1} (2 \phi \phi') = \phi'$,
the linearized iterative rule (\ref{lin-T}) is invariant in the constrained space
\begin{equation}
\label{constrained-space}
L^2_c := \left\{ \omega \in L^2_{\rm per}(-\pi,\pi) : \quad \langle \phi^2, \omega \rangle = \langle \phi \phi', \omega \rangle = 0 \right\}.
\end{equation}
To satisfy the two constraints, one can expand $\omega_n = a_n \phi + b_n \phi' + \beta_n$
with $\beta_n \in H^{\alpha}_{\rm per}(-\pi,\pi) \cap L^2_c$ and derive from (\ref{lin-T}):
\begin{equation}
\label{iteration-linear}
a_{n+1} = 0, \quad b_{n+1} = b_n, \quad \beta_{n+1} = \mathcal{L}_T \beta_n,
\end{equation}
where
\begin{equation}
\label{operator-L-T}
\mathcal{L}_T := \mathcal{L}_{c,\alpha}^{-1} (2 \phi \cdot) = {\rm Id} - \mathcal{L}_{c,\alpha}^{-1} \mathcal{H}_{c,\alpha} : \quad
H^{\alpha}_{\rm per}(-\pi,\pi) \cap L^2_c \mapsto H^{\alpha}_{\rm per}(-\pi,\pi) \cap L^2_c
\end{equation}
is the linearized iterative operator with $\mathcal{H}_{c,\alpha}$ given by (\ref{Jacobian}).
The following two results provide sufficient conditions for divergence or convergence of the iterative method (\ref{T}).

\begin{theorem}
\label{theorem-divergence-classic}
Assume $\int_{-\pi}^{\pi} \phi^3 dx \neq 0$.
There exists $w_0 \in H^{\alpha}_{\rm per}(-\pi,\pi)$ near $\phi \in H^{\alpha}_{\rm per}(-\pi,\pi)$
such that the iterative method (\ref{T}) diverges from $\phi$ if $\sigma(\mathcal{L}_T)$ in $L^2_c$
includes at least one eigenvalue outside the unit disk.
\end{theorem}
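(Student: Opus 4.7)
The plan is to show that $\phi$ is a Lyapunov unstable fixed point of $T_{c,\alpha}$ by exhibiting, for arbitrarily small $\epsilon > 0$, an initial guess $w_0 = \phi + \epsilon v$ built from an unstable eigenvector $v$ of $\mathcal{L}_T$ whose nonlinear iterates escape a fixed $H^{\alpha}_{\rm per}$-ball around $\phi$. The argument proceeds in three steps: smoothness of $T_{c,\alpha}$, transfer of the unstable eigenvalue from $\mathcal{L}_T|_{L^{2}_{c}}$ to the full Fr\'echet derivative $DT_{c,\alpha}(\phi)$ on $H^{\alpha}_{\rm per}(-\pi,\pi)$, and a Lyapunov--Perron instability estimate.

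For smoothness, $H^{\alpha}_{\rm per}(-\pi,\pi)$ is a Banach algebra since $\alpha > \alpha_{0} > 1/2$, so $w \mapsto w^{2}$ is real-analytic; the inverse $\mathcal{L}_{c,\alpha}^{-1}: L^{2}_{\rm per}(-\pi,\pi) \to H^{\alpha}_{\rm per}(-\pi,\pi)$ is bounded under the hypothesis $c \notin \{K^{\alpha}\}_{K \in \mathbb{N}}$ by the spectrum formula (\ref{spectrum-L}); and the denominator of $M(w)$ equals $\int_{-\pi}^{\pi}\phi^{3}\,dx \neq 0$ at $w = \phi$ by the standing assumption (\ref{assumption-on-phi}), so $M$ is smooth near $\phi$. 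Consequently $T_{c,\alpha}$ is real-analytic in a neighborhood of $\phi$, with Taylor expansion
\[
T_{c,\alpha}(\phi + \omega) = \phi + DT_{c,\alpha}(\phi)\omega + R(\omega), \qquad \|R(\omega)\|_{H^{\alpha}} \leq C_{0}\|\omega\|_{H^{\alpha}}^{2}, \quad \|\omega\|_{H^{\alpha}} \leq \delta_{0},
\]
where $DT_{c,\alpha}(\phi)$ is the right-hand side of (\ref{lin-T}).

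To transfer the unstable eigenvalue, pick $v \in L^{2}_{c}$ with $\mathcal{L}_T v = \lambda v$ and $|\lambda| > 1$. Bootstrapping through $v = \lambda^{-1}\mathcal{L}_{c,\alpha}^{-1}(2\phi v)$ places $v \in H^{\alpha}_{\rm per}(-\pi,\pi)$; normalize $\|v\|_{H^{\alpha}} = 1$. The identity $\mathcal{L}_{c,\alpha}\phi = \phi^{2}$ coming from (\ref{ode}) and the constraint $\langle \phi^{2}, v \rangle = 0$ inherited from $v \in L^{2}_{c}$ kill the rank-one term in (\ref{lin-T}) when $\omega = v$, so $DT_{c,\alpha}(\phi) v = \mathcal{L}_T v = \lambda v$; thus $\lambda$ is an eigenvalue of $DT_{c,\alpha}(\phi)$ on all of $H^{\alpha}_{\rm per}(-\pi,\pi)$.

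For the instability step, compactness of $\mathcal{L}_T$ (a bounded multiplication by $2\phi$ followed by the smoothing operator $\mathcal{L}_{c,\alpha}^{-1}$) makes $\lambda$ an isolated eigenvalue of finite algebraic multiplicity, providing via the Riesz spectral projection a left eigenfunctional $\ell \in (H^{\alpha}_{\rm per})^{\ast}$ with $\ell \circ DT_{c,\alpha}(\phi) = \lambda \ell$ and $\ell(v) = 1$. Taking $w_{0} = \phi + \epsilon v$ and $\omega_{n} := w_{n} - \phi$, the scalar $a_{n} := \ell(\omega_{n})$ obeys $a_{n+1} = \lambda a_{n} + \ell(R(\omega_{n}))$ with $|\ell(R(\omega_{n}))| \leq \|\ell\|_{\ast}C_{0}\|\omega_{n}\|_{H^{\alpha}}^{2}$. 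Fixing $\rho \in (1,|\lambda|)$ and then $\delta \in (0,\delta_{0}]$ small enough that $\|\ell\|_{\ast}C_{0}\delta \leq (|\lambda|-\rho)/2$, a routine induction gives $|a_{n}| \geq \tfrac{1}{2}\rho^{n}\epsilon$ for every $n$ such that $\|\omega_{k}\|_{H^{\alpha}} \leq \delta$ on $k \leq n$. Since $|a_{n}| \leq \|\ell\|_{\ast}\|\omega_{n}\|_{H^{\alpha}}$ and $\rho > 1$, the bound $\|\omega_{n}\|_{H^{\alpha}} \leq \delta$ must fail at some finite $n$, so the iterates exit $B_{\delta}(\phi)$, which is the stated divergence. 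The chief obstacle is the bookkeeping in this induction: $DT_{c,\alpha}(\phi)$ has marginal modes (the null direction $\phi$ with eigenvalue $0$ and the translation direction $\phi'$ with eigenvalue $1$) that sit inside or on the unit circle, and one must verify that compactness of $\mathcal{L}_T$ isolates $\lambda$ from them so that the Riesz projection and left eigenfunctional $\ell$ are well-defined; once this is in place the rest is a standard Lyapunov--Perron estimate (and the complex-$\lambda$ case is handled identically by working on the two-dimensional real invariant subspace spanned by $\operatorname{Re} v$ and $\operatorname{Im} v$).
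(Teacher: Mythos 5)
Your proposal is correct and is, in substance, the argument that the paper compresses into a single sentence: the published proof merely asserts that the unstable eigenfunction ``defines a direction along which the sequence diverges, as follows from the unstable manifold theorem,'' whereas you actually prove the required linear-instability-implies-nonlinear-instability statement. Two things you supply are genuinely worth having. First, the transfer step: the hypothesis concerns $\sigma(\mathcal{L}_T)$ on the constrained space $L^2_c$, while the nonlinear iteration runs on all of $H^{\alpha}_{\rm per}(-\pi,\pi)$; your observation that $\mathcal{L}_{c,\alpha}\phi=\phi^2$ together with $\langle \phi^2,v\rangle=0$ kills the rank-one term in (\ref{lin-T}), so that $\lambda$ is an honest eigenvalue of the full Fr\'echet derivative $DT_{c,\alpha}(\phi)$, is exactly the point the paper leaves implicit. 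Second, you correctly locate where the standing assumption $\int_{-\pi}^{\pi}\phi^3\,dx\neq 0$ enters, namely the smoothness of the quotient $M$ near $w=\phi$. The only soft spot is the normalization $\ell(v)=1$: if $\lambda$ were a defective eigenvalue of the compact operator $DT_{c,\alpha}(\phi)$, the left eigenfunctional produced by the Riesz projection annihilates the chosen right eigenvector, and the induction should instead be run on $\|P_{\lambda}\omega_n\|$ in an adapted norm in which the restriction of $DT_{c,\alpha}(\phi)$ to the unstable spectral subspace expands by a fixed factor $\rho>1$; this is the standard repair and does not change the conclusion (and in the paper's applications the relevant unstable eigenvalues are simple, so your version applies verbatim).
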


\begin{proof}
If $\sigma(\mathcal{L}_T)$ in $L^2_c$ admits at least one eigenvalue
outside the unit disk, the corresponding eigenfunction of $\mathcal{L}_T$ 
defines a direction in $H^{\alpha}_{\rm per}(-\pi,\pi)$ along which
the sequence $\{ w_n \}_{n \in \mathbb{N}}$ diverges from the fixed point $\phi$, 
as follows from the unstable manifold theorem. 
\end{proof}

\begin{theorem}
\label{theorem-convergence-classic}
Assume $\int_{-\pi}^{\pi} \phi^3 dx \neq 0$ and $\int_{-\pi}^{\pi} \phi (\phi')^2 dx \neq 0$.
There exists a small $\epsilon_0 > 0$ such that for every $w_0 \in H^{\alpha}_{\rm per}(-\pi,\pi)$
satisfying
\begin{equation}
\label{estimate-0}
\epsilon := \| w_0 - \phi \|_{H^{\alpha}_{\rm per}} \leq \epsilon_0,
\end{equation}
there exist $b_*$ satisfying $|b_*| \leq C \epsilon$ for some $\epsilon$-independent $C > 0$
such that the iterative method (\ref{T}) converges to $\phi(\cdot - b_*)$ if
$\sigma(\mathcal{L}_T)$ in $L^2_c$ is located inside the unit disk.
\end{theorem}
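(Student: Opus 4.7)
The plan is to extend the linear analysis (\ref{lin-T})--(\ref{iteration-linear}) to the full nonlinear iteration and use translation invariance $T_{c,\alpha}(\phi(\cdot-b))=\phi(\cdot-b)$ to absorb the neutral $\phi'$-direction (the eigenvalue-$1$ mode of the linearization) into an a posteriori shift of the limiting profile.

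\emph{Decomposition and Taylor expansion.} Using (\ref{assumption-on-phi}) together with the periodicity identities $\langle \phi^2,\phi'\rangle = \langle \phi\phi',\phi\rangle = \frac{1}{3}\int_{-\pi}^{\pi}(\phi^3)'\,dx = 0$, the $2\times 2$ coefficient matrix associated with the two constraints defining $L^2_c$ is diagonal and invertible, so $H^\alpha_{\rm per}$ decomposes as $\mathrm{span}\{\phi\}\oplus\mathrm{span}\{\phi'\}\oplus(H^\alpha_{\rm per}\cap L^2_c)$ and every $w$ near $\phi$ admits a unique representation
\begin{equation*}
w = \phi + a(w)\,\phi + b(w)\,\phi' + \beta(w), \qquad \beta(w)\in H^\alpha_{\rm per}\cap L^2_c,
\end{equation*}
with $a,b,\beta$ smooth in $w$ and linearly bounded by $\|w-\phi\|_{H^\alpha}$. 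For $\alpha>1/2$, $H^\alpha_{\rm per}$ is a Banach algebra, $\mathcal{L}_{c,\alpha}^{-1}$ maps $L^2_{\rm per}$ boundedly into $H^\alpha_{\rm per}$, and the denominator $\langle w^2,w\rangle = \int_{-\pi}^{\pi}\phi^3\,dx + O(\|w-\phi\|_{H^\alpha}) \neq 0$ by (\ref{assumption-on-phi}); hence $T_{c,\alpha}$ is analytic near $\phi$ with a Taylor expansion
\begin{equation*}
T_{c,\alpha}(\phi+\omega) = \phi + \mathcal{L}_{\mathrm{lin}}\omega + N(\omega), \qquad \|N(\omega)\|_{H^\alpha} \leq C\|\omega\|_{H^\alpha}^2,
\end{equation*}
where $\mathcal{L}_{\mathrm{lin}}$ is the linearization on the right-hand side of (\ref{lin-T}).

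\emph{Projection and contraction.} Writing $w_n = \phi + a_n\phi + b_n\phi' + \beta_n$ and projecting $w_{n+1}=T_{c,\alpha}(w_n)$ onto the three subspaces, the identities (\ref{iteration-linear}) combined with the quadratic bound on $N$ give
\begin{equation*}
a_{n+1} = O(\|\omega_n\|^2), \quad b_{n+1} = b_n + O(\|\omega_n\|^2), \quad \beta_{n+1} = \mathcal{L}_T\beta_n + O(\|\omega_n\|^2),
\end{equation*}
where $\omega_n := w_n-\phi$ and the remainders are in the $H^\alpha_{\rm per}$-norm. Since $\sigma(\mathcal{L}_T)$ lies strictly inside the unit disk in $L^2_c$ (and $\mathcal{L}_T$ preserves $H^\alpha$-regularity by the smoothing of $\mathcal{L}_{c,\alpha}^{-1}$), the spectral radius satisfies $\rho(\mathcal{L}_T)<1$, and a standard renorming gives an equivalent norm $\|\cdot\|_\ast$ on $H^\alpha_{\rm per}\cap L^2_c$ with $\|\mathcal{L}_T\|_\ast \leq r<1$. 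Taking $\epsilon_0$ small, an induction shows $\|\beta_n\|_\ast \leq Cr_\ast^n\epsilon$ and $|a_n|\leq Cr_\ast^{2n}\epsilon^2$ for some $r_\ast\in(r,1)$.

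\emph{Convergence of the shift.} From $|b_{n+1}-b_n|\leq Cr_\ast^{2n}\epsilon^2$, the sequence $\{b_n\}$ is Cauchy with a limit $b_\ast$ satisfying $|b_\ast|\leq C\epsilon$, and the Taylor expansion $\phi(\cdot-b_\ast) = \phi + b_\ast\phi' + O(b_\ast^2)$ gives
\begin{equation*}
w_n - \phi(\cdot - b_\ast) = a_n\phi + (b_n - b_\ast)\phi' + \beta_n + O(b_\ast^2) \longrightarrow 0 \quad \text{in } H^\alpha_{\rm per}.
\end{equation*}
The main obstacle is precisely this neutral $\phi'$-direction: because $b_n$ is linearly invariant, its drift is purely nonlinear, and the argument succeeds only because the quadratic drifts $O(r_\ast^{2n}\epsilon^2)$ form a convergent geometric series whose total can, via translation invariance of $T_{c,\alpha}$, be identified with an actual shift of the limiting wave. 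The secondary technical point is controlling the quadratic remainder $N(\omega)$ in the $H^\alpha$-topology (rather than merely $L^2$), which relies on the Banach algebra structure of $H^\alpha_{\rm per}(-\pi,\pi)$ for $\alpha>\alpha_0>1/2$.
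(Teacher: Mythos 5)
Your preliminary observations are fine: the constraint matrix for the decomposition is indeed diagonal and invertible because $\langle \phi^2,\phi'\rangle=\langle \phi\phi',\phi\rangle=0$ together with the two standing assumptions, the denominator of $M$ stays bounded away from zero near $\phi$, and $T_{c,\alpha}$ admits a Taylor expansion with a quadratic remainder in the Banach algebra $H^{\alpha}_{\rm per}$. The gap is in the contraction step. Your decomposition is centered at the \emph{fixed} profile $\phi$, so $\omega_n:=w_n-\phi=a_n\phi+b_n\phi'+\beta_n$ does not tend to zero: it tends to $\phi(\cdot-b_*)-\phi$, which is of size $O(\epsilon)$ whenever $b_*\neq 0$. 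Hence the remainders $O(\|\omega_n\|^2_{H^{\alpha}_{\rm per}})$ in your projected recursion contain the non-decaying contribution $b_n^2$ and are persistently of size $O(\epsilon^2)$, not geometrically small. The bounds you claim by induction, $\|\beta_n\|_*\leq Cr_*^n\epsilon$, $|a_n|\leq Cr_*^{2n}\epsilon^2$ and, crucially, $|b_{n+1}-b_n|\leq Cr_*^{2n}\epsilon^2$, therefore do not follow from the recursion as written; it only yields $|b_{n+1}-b_n|\leq C\epsilon^2$ uniformly in $n$, whose sum diverges, so the induction does not even close to keep $|b_n|\leq C\epsilon$. A consistency check confirms the problem: the exact translates $\phi(\cdot-b)$ are fixed points of $T_{c,\alpha}$, and their decompositions have $a$- and $\beta$-components of size $O(b^2)\neq 0$, contradicting your conclusion that $a_n\to 0$ and $\beta_n\to 0$. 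At best your estimates place $w_n$ in an $O(\epsilon^2)$-neighborhood of a translate, which is not the statement of the theorem.

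The paper repairs exactly this point with a modulation argument: $b_n$ is defined not by the linear splitting of $w_n-\phi$ but by the nonlinear decomposition (\ref{dec-w}) subject to the orthogonality condition (\ref{constraint-w}), which is well posed by the implicit function theorem under $\int_{-\pi}^{\pi}\phi(\phi')^2\,dx\neq 0$. In that decomposition $\omega_n$ measures the distance to the \emph{nearest translate} of $\phi$, so it genuinely contracts under $\mathcal{L}_T$, and the increment $\Delta b_n$, obtained from the scalar root-finding problem (\ref{T-b}), obeys $|\Delta b_n|\leq C\|\omega_n\|^2_{H^{\alpha}_{\rm per}}$ with $\|\omega_n\|$ now decaying exponentially; only then is $\sum_n|\Delta b_n|$ finite and $b_n\to b_*$ with $|b_*|\leq C\epsilon$. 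Your closing remark that the neutral direction must be ``absorbed via translation invariance into a shift of the limiting wave'' is the right idea, but implementing it requires re-centering the expansion at $\phi(\cdot-b_n)$ at every step rather than expanding once around $\phi$; without that re-centering the quadratic drift is not summable and the proof does not go through.
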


\begin{proof}
Let us first assume that $w_0 \in H^{\alpha}_{\rm per}(-\pi,\pi)$ is even, in which case the assertion
is true with $b_* = 0$. Since $\mathcal{L}_{c,\alpha}$ maps
even functions to even functions, the sequence of functions $\{ w_n \}_{n \in \mathbb{N}}$ in $H^{\alpha}_{\rm per}(-\pi,\pi)$
generated by (\ref{T}) is even. Therefore, the linearization $w_n = \phi + \omega_n$ and the decomposition
$\omega_n = a_n \phi + b_n \phi' + \beta_n$ yields $b_n = 0$ for every $n \geq 0$. The linear iterative formula (\ref{iteration-linear})
yields $a_n = 0$ for every $n \geq 1$ even if $a_0 \neq 0$.
The linearized operator $\mathcal{L}_T$ given by (\ref{operator-L-T}) is a strict contraction 
if $\sigma(\mathcal{L}_T)$ in $L^2_c$ is located inside the unit disk.
Convergence of the sequence to $\phi$ follows by Banach's fixed-point theorem (Theorem 1.A in \cite{Zeidler}).

Let us now relax the condition that the initial guess $w_0 \in H^{\alpha}_{\rm per}(-\pi,\pi)$ is even.
In order to control the projection $b_n$ in the decomposition $\omega_n = a_n \phi + b_n \phi' + \beta_n$,
we need to use tools of the modulation theory for periodic waves, see, e.g., Section 5 in \cite{GalPel2015}.
Instead of defining $b_n$ by $\omega_n = a_n \phi + b_n \phi' + \beta_n$, we define $b_n \in \mathbb{R}$
by using the decomposition
\begin{equation}
\label{dec-w}
w_n(x) = \phi(x-b_n) + \omega_n(x-b_n)
\end{equation}
and the orthogonality condition
\begin{equation}
\label{constraint-w}
\langle \phi \phi', \omega_n \rangle = 0.
\end{equation}
By a standard application  of the implicit function theorem, see, e.g., Lemma 6.1 in \cite{GalPel2015}, for every
$w_n \in H^{\alpha}_{\rm per}(-\pi,\pi)$ satisfying
\begin{equation}
\label{arg-w}
\epsilon_n := \inf_{b \in [-\pi,\pi]} \| w_n - \phi(\cdot - b) \|_{H^{\alpha}_{\rm per}} \leq \epsilon_0,
\end{equation}
the decomposition (\ref{dec-w})--(\ref{constraint-w}) is unique under the assumption $\int_{-\pi}^{\pi} \phi (\phi')^2 dx \neq 0$
with uniquely defined $b_n$ near the argument of the infimum
in (\ref{arg-w}) and uniquely defined $\omega_n$ satisfying
\begin{equation}
\label{estimate-1}
\| \omega_n \|_{H^{\alpha}_{\rm per}} \leq C_0 \epsilon_n
\end{equation}
for some $\epsilon_n$-independent constant $C_0 > 0$.

Substituting the decomposition (\ref{dec-w}) into the iterative method (\ref{T}) and using the translational
invariance in $x$, we obtain the equivalent iterative scheme:
\begin{equation}
\label{T-equiv}
\omega_{n+1} = \phi(\cdot + \Delta b_n) - \phi + T'(\phi(\cdot + \Delta b_n) \omega_n(\cdot + \Delta b_n)
+ N(\omega_n(\cdot + \Delta b_n)),
\end{equation}
where $\Delta b_n := b_{n+1} - b_n$, $T'(\phi) \omega_n$ denotes the linearized iterative operator given by
the right-hand side in (\ref{lin-T}), and $N(\omega_n)$ is the nonlinear terms satisfying
\begin{equation}
\label{estimate-2}
\| N(\omega_n) \|_{H^{\alpha}_{\rm per}} \leq C \| \omega_n \|^2_{H^{\alpha}_{\rm per}},
\end{equation}
for every $\omega_n \in B_{\rho}(0) := \left\{ \omega \in H^{\alpha}_{\rm per}(-\pi,\pi) :
\;\; \| \omega \|_{H^{\alpha}_{\rm per}} \leq \rho \right\}$, where the constant $C > 0$ does not
depend on $\rho$ provided the radius $\rho$ of the ball $B_{\rho}(0)$ is small. Thanks
to (\ref{estimate-0}) and (\ref{estimate-1}), we work with $\rho = C \epsilon$ for some
positive $\epsilon$-independent constant $C$.

By using the constraint (\ref{constraint-w}) both for $\omega_n$ and $\omega_{n+1}$, we derive
the following equation for $\Delta b_n$:
\begin{equation}
\label{T-b}
0 = \langle \phi \phi', \phi(\cdot + \Delta b_n) - \phi \rangle +
\langle \phi \phi', T'(\phi(\cdot + \Delta b_n) \omega_n(\cdot + \Delta b_n) \rangle
+ \langle \phi \phi', N(\omega_n(\cdot + \Delta b_n)) \rangle.
\end{equation}
This equation can be treated as the root-finding problem $F(\Delta b_n,\omega_n) = 0$,
where
$$
F : \mathbb{R} \times H^{\alpha}_{\rm per}(-\pi,\pi) \mapsto \mathbb{R}
$$
is a smooth function in its variables satisfying $F(0,0) = 0$ and $\partial_{\Delta b_n} F(0,0) \neq 0$
thanks to smoothness of $\phi \in H^{\infty}_{\rm per}(-\pi,\pi)$ and $N(\omega_n)$ as well as
the assumption $\int_{-\pi}^{\pi} \phi (\phi')^2 dx \neq 0$. By the implicit function theorem,
the root-finding problem (\ref{T-b}) is uniquely solvable in $\Delta b_n$
for every $\omega_n \in B_{\rho}(0)$ with small $\rho > 0$. Moreover, thanks to
$\langle \phi \phi', T'(\phi) \omega_n \rangle = \langle \phi \phi', \omega_n \rangle = 0$ and (\ref{estimate-2}),
the uniquely found $\Delta b_n$ satisfies the bound
\begin{equation}
\label{estimate-3}
|\Delta b_n| \leq C \| \omega_n \|^2_{H^{\alpha}_{\rm per}},
\end{equation}
for some constant $C > 0$ that does not depend on the small radius $\rho$.

Substituting $\Delta b_n$ satisfying (\ref{estimate-3}) into (\ref{T-equiv})
and decomposing $\omega_n = a_n \phi + \beta_n$ with $a_n \in \mathbb{R}$ and
$\beta_n \in H^{\alpha}_{\rm per}(-\pi,\pi) \cap L^2_c$, we obtain
the linearized problem
\begin{equation}
\label{T-a}
a_{n+1} = 0, \quad \beta_{n+1} = \mathcal{L}_T \beta_n.
\end{equation}
Since $\mathcal{L}_T$ is a strict contraction in $L^2_c$, convergence $a_n \to 0$,
$\Delta b_n \to 0$, and $\beta_n \to 0$ as $n \to \infty$ follows by 
Banach's fixed-point theorem (Theorem 1.A in \cite{Zeidler}). 
Moreover, these sequences converge exponentially fast so that
the sequence $\{ b_n \}_{n \in \mathbb{N}}$ converges to a limit denoted by $b_*$.
Since $|b_* - b_0| \leq C \epsilon^2$ thanks to (\ref{estimate-1}) and (\ref{estimate-3}), 
whereas $|b_0| \leq C \epsilon$ thanks to (\ref{estimate-0}), (\ref{arg-w}), and triangle inequality,
we also have $|b_*| \leq C \epsilon$ for some $\epsilon$-independent $C > 0$.
The assertion is proven thanks
to the decomposition (\ref{dec-w}) with $\omega_n = a_n \phi + \beta_n$.
\end{proof}

\begin{remark}
Compared to Section 6 in \cite{GalPel2015}, where standard orthogonality condition
$\langle \phi', w \rangle = 0$ was used together with the energy conservation,
we have to use the modified orthogonality condition $\langle \phi \phi', w \rangle = 0$
in order to comply with the iterative scheme (\ref{T-equiv}) which results in the non-self-adjoint
linearized operator $T'(\phi) \omega_n$ given by the right-hand side of (\ref{lin-T}).
\end{remark}

In order to compute $\sigma(\mathcal{L}_T)$ in $L^2_c$ 
used in Theorems \ref{theorem-divergence-classic} and
\ref{theorem-convergence-classic}, we study
the spectrum of $\mathcal{L}_{c,\alpha}^{-1} \mathcal{H}_{c,\alpha}$
in $L^2_{\rm per}(-\pi,\pi)$. Analytical results on convergence of the method
for $c \gtrsim 1$ and divergence for $c > 2^{\alpha}$ are obtained in Sections \ref{sec-3-1}
and \ref{sec-3-2} respectively. These results give the proof of Theorem \ref{theorem-main-1}.
Numerical results showing convergence or divergence of the method for $c$ in $(1,2^{\alpha})$
are obtained in Section \ref{sec-3-3} for $\alpha = 2$ and $\alpha = 1$.

\subsection{Case $c \gtrsim 1$}
\label{sec-3-1}

Here we prove that the iterative method converges near the single-lobe periodic wave $\phi$
in the small-amplitude limit for $c \gtrsim 1$ if $\alpha > \alpha_1$
and diverges if $\alpha \in (\alpha_0,\alpha_1)$, where $\alpha_0$ and $\alpha_1$ are
given by (\ref{alpha}). Note that $\alpha_0 \approx 0.585$
and $\alpha_1 \approx 1.322$ so that $1/2 < \alpha_0 < 1 < \alpha_1 < 2$.

The following lemma characterizes the spectrum of $\mathcal{L}_{c,\alpha}^{-1} \mathcal{H}_{c,\alpha}$ in
$L^2_{\rm per}(-\pi,\pi)$ for $c \gtrsim 1$ and $\alpha > \alpha_0$.

\begin{lemma}
\label{lemma-3}
For every $c \gtrsim 1$ and $\alpha > \alpha_0$, $\sigma(\mathcal{L}_{c,\alpha}^{-1} \mathcal{H}_{c,\alpha})$
in $L^2_{\rm per}(-\pi,\pi)$ consists of a countable sequence of eigenvalues in a neighborhood of $1$
and simple eigenvalues $\{ -1, 0, \lambda_1, \lambda_2 \}$ with
$$
\lambda_1 \to \frac{2^{\alpha+1}-5}{2^{\alpha+1}-3} \quad \mbox{\rm and} \quad
\lambda_2 \to 2 \quad \mbox{\rm as} \quad c \to 1.
$$
Moreover, $\lambda_2 < 2$ for $c \gtrsim 1$, whereas
$\lambda_1 < 0$ if $\alpha \in (\alpha_0,\alpha_1)$ and $\lambda_1 \in (0,1)$ if $\alpha > \alpha_1$.
\end{lemma}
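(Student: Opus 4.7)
The plan is to combine two exact algebraic identities with perturbative analysis near the small-amplitude limit $c \to 1^+$. Differentiating (\ref{ode}) in $x$ yields $\mathcal{H}_{c,\alpha}\phi' = 0$, while multiplying (\ref{ode}) by $\phi$ gives $\mathcal{H}_{c,\alpha}\phi = -\phi^2 = -\mathcal{L}_{c,\alpha}\phi$. Hence $v = \phi'$ and $v = \phi$ are exact eigenfunctions of (\ref{gEp}) with eigenvalues $\lambda = 0$ and $\lambda = -1$, respectively, for every $c > 1$, and both eigenvalues are simple because the two eigenfunctions have opposite parity while ${\rm ker}(\mathcal{H}_{c,\alpha}) = {\rm span}\{\phi'\}$ by Lemma \ref{lemma-11}.

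For the remaining spectrum I would use the factorization $\mathcal{L}_{c,\alpha}^{-1}\mathcal{H}_{c,\alpha} = {\rm Id} - 2\mathcal{L}_{c,\alpha}^{-1}(\phi\,\cdot)$ together with the Stokes expansion of Theorem \ref{lemma-small}. On the orthogonal complement of ${\rm ker}(\mathcal{L}_{c=1,\alpha}) = {\rm span}\{\cos,\sin\}$, the operator $\mathcal{L}_{c,\alpha}$ is uniformly invertible for $c \gtrsim 1$ since its remaining eigenvalues $|n|^\alpha - c$ with $n \neq \pm 1$ are bounded away from zero. Thus $\|2\mathcal{L}_{c,\alpha}^{-1}(\phi\,\cdot)\| = O(a)$ on this complement, and analytic perturbation theory (Theorem VII.1.7 in \cite{Kato}) confines the corresponding eigenvalues to a neighborhood of $1$, producing the countable cluster. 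The nontrivial action occurs on the two-dimensional subspace ${\rm ker}(\mathcal{L}_{c=1,\alpha})$, where the generalized eigenvalue ratio is $0/0$ at $a = 0$ and must be resolved by a Lyapunov--Schmidt reduction analogous to that in the proof of Lemma \ref{lemma-2}: expand $v = v_0 + a v_1 + a^2 v_2 + O(a^3)$ with $v_0 \in \{\cos,\sin\}$ and $\lambda = \lambda_0 + O(a)$, solve the order-$a$ equation uniquely for $v_1$, and impose the Fredholm solvability condition at order $a^2$. Using $c_2 = 1 - 1/(2(2^\alpha - 1))$ from (\ref{correction2}), a direct calculation yields in the even sector ($v_0 = \cos$) the quadratic $\lambda_0(\lambda_0 - 1) = 2$, whose roots $\lambda_0 = -1$ and $\lambda_0 = 2$ identify the exact eigenfunction $\phi = a\cos + O(a^2)$ and the limit $\lim_{c \to 1^+}\lambda_2 = 2$, respectively; the analogous quadratic in the odd sector ($v_0 = \sin$) has roots $\lambda_0 = 0$ (matching $\phi' = -a\sin + O(a^2)$) and $\lambda_0 = (2^{\alpha+1} - 5)/(2^{\alpha+1} - 3) = \lim_{c \to 1^+}\lambda_1$.

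The sign assertions for $\lambda_1$ across the threshold $\alpha_1$ follow directly from $2^{\alpha_1+1} = 5$ and definition (\ref{alpha}); the claim $\lambda_1 \in (0,1)$ for $\alpha > \alpha_1$ additionally uses that the numerator of the explicit limit is strictly less than its denominator. The strict inequality $\lambda_2 < 2$ for $c \gtrsim 1$ requires extending the perturbation expansion by one further order, from which a direct Fourier computation shows the leading correction to $\lambda_2 = 2 + O(a)$ to be strictly negative. The main obstacle is the degenerate limit itself: because $\mathcal{L}_{c=1,\alpha}$ has a nontrivial kernel, standard analytic perturbation theory does not apply to $\mathcal{L}_{c,\alpha}^{-1}\mathcal{H}_{c,\alpha}$ on the neutral subspace, and the splittings determining $\lambda_1$ and $\lambda_2$ emerge only from the second-order solvability condition. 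Analyticity of the resulting eigenvalue branches in $a$ follows from the implicit function theorem applied to the reduced two-dimensional problem, with the required smoothness coming from the Banach algebra property of $H^\alpha_{\rm per}(-\pi,\pi)$, valid for $\alpha > 1/2$ and hence for $\alpha > \alpha_0$.
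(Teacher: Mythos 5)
Your proposal follows essentially the same route as the paper's proof: the exact eigenpairs $(\lambda,v)=(-1,\phi)$ and $(0,\phi')$ from the identities $\mathcal{H}_{c,\alpha}\phi=-\mathcal{L}_{c,\alpha}\phi$ and $\mathcal{H}_{c,\alpha}\phi'=0$, compactness/uniform invertibility off the kernel of $\mathcal{L}_{c=1,\alpha}$ to confine the remaining spectrum near $1$, and second-order solvability conditions in the even and odd sectors producing the quadratics $\lambda(\lambda-1)=2$ and $\lambda\left[(2^{\alpha+1}-3)\lambda-(2^{\alpha+1}-5)\right]=0$. The only step you leave implicit is the one that actually carries the burden of $\lambda_2<2$: the paper pushes the even-sector expansion to $\mathcal{O}(a^4)$ and obtains $\Lambda_2=-1+\tfrac{3}{2^{\alpha}-1}-\tfrac{7}{2^{\alpha+1}-3}$, whose negativity for all $\alpha>\alpha_0$ still has to be checked (the paper does so graphically), so your assertion that "a direct Fourier computation shows the leading correction to be strictly negative" should be backed by that explicit formula.
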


\begin{proof}
It follows from (\ref{spectrum-L}) that for every $c \gtrsim 1$, the operator $\mathcal{L}_{c,\alpha}$ in $L^2_{\rm per}(-\pi,\pi)$
is invertible and
$$
\sigma(\mathcal{L}_{c,\alpha}^{-1})=\{ (-c + |n|^\alpha)^{-1}, \;\; n\in\mathbb{Z}\}.
$$
Since the sequence of eigenvalues is squared summable if $\alpha > 1/2$,
the linear bounded operator $\mathcal{L}_{c,\alpha}^{-1}$ is of the Hilbert-Schmidt class (see Example 2 in Section 5.16 of \cite{Zeidler}),
hence it is compact. The linear operator $\mathcal{L}_T$ in $L^2_{\rm per}(-\pi,\pi)$ is a composition
of a bounded operator $2 \phi \cdot$ and a compact (Hilbert--Schmidt) operator $\mathcal{L}_{c,\alpha}^{-1}$,
hence $\mathcal{L}_T$ is a compact operator and $\sigma(\mathcal{L}_T)$ in $L^2_{\rm per}(-\pi,\pi)$
consists of a sequence of eigenvalues converging to $0$. Thanks to the representation (\ref{operator-L-T}),
$\sigma(\mathcal{L}_{c,\alpha}^{-1} \mathcal{H}_{c,\alpha})$ in $L^2_{\rm per}(-\pi,\pi)$
consists of a sequence of eigenvalues converging to $1$.

Eigenvalues $\{-1,0\}$ of $\mathcal{L}_{c,\alpha}^{-1} \mathcal{H}_{c,\alpha}$ in $L^2_{\rm per}(-\pi,\pi)$
follow from exact computations for every $c > 1$:
\begin{equation}
\label{eigenvalues-exact}
\mathcal{L}_{c,\alpha}^{-1} \mathcal{H}_{c,\alpha} \phi = - \phi \quad \mbox{\rm and} \quad \mathcal{L}_{c,\alpha}^{-1} \mathcal{H}_{c,\alpha} \phi' = 0.
\end{equation}
In order to identify other eigenvalues of $\mathcal{L}_{c,\alpha}^{-1} \mathcal{H}_{c,\alpha}$ in $L^2_{\rm per}(-\pi,\pi)$,
we consider the generalized eigenvalue problem (\ref{gEp}) defined by linear operators
$\mathcal{L}_{c,\alpha}$ and $\mathcal{H}_{c,\alpha}$ in $L^2_{\rm per}(-\pi,\pi)$ with the domains 
in $H^{\alpha}_{\rm per}(-\pi,\pi)$.

Since $\mathcal{H}_{c=1,\alpha}$ coincides with $\mathcal{L}_{c=1,\alpha}$,
the generalized eigenvalue problem (\ref{gEp}) for $c = 1$
admits only one solution $\lambda = 1$ for every $v \in H^{\alpha}_{\rm per}(-\pi,\pi) \backslash \{ e^{i x}, e^{-i x} \}$.
Since $(\phi,c)$ depend analytically on $a$ in Theorem \ref{lemma-small},
by the analytic perturbation theory (Theorem VII.1.7 in \cite{Kato}),
the eigenvalues of $\mathcal{L}_{c,\alpha}^{-1} \mathcal{H}_{c,\alpha}$ in $L^2_{\rm per}(-\pi,\pi)$
for every $c \gtrsim 1$ are divided into two sets: a countable sequence of eigenvalues near $1$ and converging to $1$ related to the
subspace $L^2_{\rm per}(-\pi,\pi) \backslash \{ e^{i x}, e^{-i x} \}$ and a finite number of eigenvalues
related to the subspace $\{ e^{i x}, e^{-i x} \}$. The second set includes eigenvalues $\{-1,0\}$ due to the exact solutions
(\ref{eigenvalues-exact}) for every $c > 1$. The subspace
$\{ e^{i x}, e^{-i x} \}$ may be related to more than two simple eigenvalues in the generalized
eigenvalue problem (\ref{gEp}) because both $\mathcal{H}_{c = 1,\alpha}$ and $\mathcal{L}_{c = 1, \alpha}$ vanish
on the subspace.

In order to study all possible eigenvalues of  $\mathcal{L}_{c,\alpha}^{-1} \mathcal{H}_{c,\alpha}$ in $L^2_{\rm per}(-\pi,\pi)$
related to the subspace $\{ e^{i x}, e^{-i x} \}$, we perform perturbation expansions.
Since $\mathcal{L}_{c,\alpha}$ and $\mathcal{H}_{c,\alpha}$ are closed in the subspaces of
even and odd functions in $L^2_{\rm per}(-\pi,\pi)$, the generalized eigenvalue problem (\ref{gEp})
can be uncoupled in these subspaces. By using (\ref{speed-expansion}) and (\ref{H-expansions}),
we rewrite the generalized eigenvalue problem (\ref{gEp}) in the perturbed form:
\begin{eqnarray}
\nonumber
(\lambda - 1) \left[ 1 + D_{\alpha} + c_2 a^2 + c_4 a^4 + \mathcal{O}(a^6) \right] v & \phantom{t} & \\
- 2 \left[ a \cos(x) + a^2 \phi_2(x) + a^3 \phi_3(x) + a^4 \phi_4(x) + \mathcal{O}(a^5) \right] v & = & 0.
\label{gen-eig-exp}
\end{eqnarray}
Assuming $\lambda \neq 1$, we are looking for perturbative expansions of the eigenvalues related to
the even and odd subspace of $\{ e^{i x}, e^{-i x} \}$ separately from each other. For the even subspace, we set
\begin{equation}
\label{expansion-even}
v(x) = \cos(x) + a v_1(x) + a^2 v_2(x) + \mathcal{O}(a^3)
\end{equation}
and obtain recursively
\begin{eqnarray*}
\left\{ \begin{array}{l}
\mathcal{O}(a) \; : \quad (\lambda - 1)\left( 1 + D_{\alpha} \right) v_1 =  1 + \cos(2x), \\
\mathcal{O}(a^2) : \;\;\;(\lambda - 1)\left( 1 + D_{\alpha} \right) v_2 + (\lambda - 1) c_2 \cos(x)
=  2 \cos(x) (v_1 + \phi_2).
\end{array} \right.
\end{eqnarray*}
At $\mathcal{O}(a)$, we obtain the exact solution in $H^{\alpha}_{\rm per}(-\pi,\pi)$:
\begin{eqnarray}
\label{corr-v-1}
v_1(x) = \frac{1}{\lambda - 1} \left[ 1 -\frac{\cos(2x)}{2^\alpha-1} \right].
\end{eqnarray}
The linear inhomogeneous equation at $\mathcal{O}(a^2)$ admits a solution
$v_2 \in H^{\alpha}_{\rm per}(-\pi,\pi)$ if and only if  $\lambda$ satisfies
$$
\left[ \lambda - \frac{2}{\lambda - 1} \right] c_2 = 0.
$$
If $\alpha > \alpha_0$, then $c_2 \neq 0$ and $\lambda$ satisfies the quadratic equation $\lambda ( \lambda - 1) = 2$ with two roots $\{ -1,2\}$.
For each of the two roots, we obtain the exact solution in $H^{\alpha}_{\rm per}(-\pi,\pi)$:
\begin{eqnarray}
\label{corr-v-2}
v_2(x) = \frac{(3-\lambda) \cos(3x)}{2 (\lambda - 1)^2 (2^\alpha-1)(3^\alpha-1)}.
\end{eqnarray}
For the odd subspace, we set
\begin{equation}
\label{expansion-odd}
v(x) = \sin(x) + a v_1(x) + a^2 v_2(x) + \mathcal{O}(a^3)
\end{equation}
and obtain recursively
\begin{eqnarray*}
\left\{ \begin{array}{l}
\mathcal{O}(a) \; : \quad (\lambda - 1)\left( 1 + D_{\alpha} \right) v_1 =  \sin(2x), \\
\mathcal{O}(a^2) : \quad (\lambda - 1) \left( 1 + D_{\alpha} \right) v_2 + (\lambda - 1) c_2 \sin(x) =
2 (\cos(x) v_1 + \sin(x) \phi_2).
\end{array} \right.
\end{eqnarray*}
At $\mathcal{O}(a)$, we obtain the exact solution in $H^{\alpha}_{\rm per}(-\pi,\pi)$:
\begin{eqnarray}
\label{corr-v-3}
v_1(x) = -\frac{\sin(2x)}{(\lambda - 1) (2^\alpha-1)}.
\end{eqnarray}
The linear inhomogeneous equation at $\mathcal{O}(a^2)$ admits a solution
$v_2 \in H^{\alpha}_{\rm per}(-\pi,\pi)$ if and only if  $\lambda$ satisfies
$$
\lambda c_2 + \frac{\lambda}{(\lambda - 1) (2^{\alpha} -1)} = 0.
$$
If $\alpha > \alpha_0$, then $c_2 \neq 0$ and $\lambda$ satisfies the quadratic equation
$\lambda \left[ (2^{\alpha+1} - 3) \lambda - (2^{\alpha+1} - 5) \right] = 0$
with two roots $\{ 0, \frac{2^{\alpha+1}-5}{2^{\alpha+1}-3}\}$.
For each of the two roots, we obtain the exact solution in $H^{\alpha}_{\rm per}(-\pi,\pi)$:
\begin{eqnarray}
\label{corr-v-4}
v_2(x) & = & \frac{(3-\lambda)  \sin(3x)}{2  (\lambda - 1)^2 (2^{\alpha}-1)(3^{\alpha}-1)}.
\end{eqnarray}
Summarizing, we have obtained four eigenvalues related to the subspace $\{ e^{i x}, e^{-i x} \}$,
which are located as $c \to 1$ at the points $\{ -1,0,\frac{2^{\alpha+1}-5}{2^{\alpha+1}-3},2\}$.

The eigenvalues $\{-1,0\}$ are preserved for every $c > 1$ thanks to the exact solution (\ref{eigenvalues-exact}).
However, the eigenvalues $\{ \lambda_1,\lambda_2\}$ near $\{ \frac{2^{\alpha+1}-5}{2^{\alpha+1}-3},2\}$
depend generally on $c$. It follows by the perturbation theory that $\lambda_1 < 0$ for $c \gtrsim 1$ if
$\alpha \in (\alpha_0,\alpha_1)$ and $\lambda_1 \in (0,1)$ for $c \gtrsim 1$ if $\alpha > \alpha_1$.
We now claim that $\lambda_2 < 2$ for $c \gtrsim 1$ if $\alpha > \alpha_0$.
To prove this claim, we use the extended spectral problem (\ref{gen-eig-exp}) up to the order $\mathcal{O}(a^4)$.
Hence, instead of the expansion (\ref{expansion-even}) with (\ref{corr-v-1}) and (\ref{corr-v-2}),
we use the expansions
\begin{equation}
\label{Lambda1}
\left\{ \begin{array}{l}
v(x) = \cos(x) + a v_1(x) + a^2 v_2(x) + a^3 v_3(x) + a^4 v_4(x) + \mathcal{O}(a^5), \\
\lambda = 2 + \Lambda_2 a^2 + \mathcal{O}(a^4), \end{array} \right.
\end{equation}
where
$$
v_1(x) = 1 -\frac{\cos(2x)}{2^\alpha-1}, \quad
v_2(x) = \frac{\cos(3x)}{2 (2^\alpha-1)(3^\alpha-1)}.
$$
We obtain from the extended spectral problem (\ref{gen-eig-exp})
the linear inhomogeneous equations:
\begin{eqnarray*}
\left\{ \begin{array}{l}
\mathcal{O}(a^3) : \;\;\; (1 + D_{\alpha}) v_3 + \Lambda_2 (1 + D_{\alpha}) v_1 + c_2 v_1 =
2 \left[ \cos(x) (v_2 + \phi_3) + \phi_2 v_1\right], \\
\mathcal{O}(a^4) : \;\;\; (1 + D_{\alpha}) v_4 + \Lambda_2 (1 + D_{\alpha}) v_2 + c_2 v_2 + (c_4 + c_2 \Lambda_2) \cos(x) =
2 \left[ \cos(x) (v_3 + \phi_4) + \phi_2 v_2 + \phi_3 v_1 \right].
\end{array} \right.
\end{eqnarray*}
The linear inhomogeneous equation at $\mathcal{O}(a^3)$ admits the explicit solution:
\begin{eqnarray*}
v_3(x) & = & \frac{3^{\alpha} - 2^{\alpha + 1} + 1}{2 (2^{\alpha} - 1)^2 (3^{\alpha} - 1) (4^{\alpha} - 1)} \cos(4x)
+ \left[ \frac{\Lambda_2}{2^{\alpha} - 1} - \frac{1 + (2+c_2) (3^{\alpha} -1)}{(2^{\alpha} - 1)^{2} (3^{\alpha} - 1)} \right] \cos(2x) \\
& \phantom{t} & - \left( \Lambda_2 + c_2 + 1+ \frac{1}{2(2^\alpha-1)^2}\right).
\end{eqnarray*}
The linear inhomogeneous equation at $\mathcal{O}(a^4)$ admits a solution
$v_4 \in H^{\alpha}_{\rm per}(-\pi,\pi)$ if and only if  $\Lambda_2$ is given by
\begin{equation}
\label{Lambda2}
\Lambda_2 = -1 +\frac{3}{2^\alpha -1} - \frac{7}{ 2^{\alpha+1}-3}.
\end{equation}
It is easy to see that $\Lambda_2$ has a vertical asymptote at $\alpha= \alpha_0$.
By plotting $\Lambda_2$ versus $\alpha$ on Figure \ref{fig:Lambda2_alpha},
we verify that $\Lambda_2 < 0$ for every $\alpha > \alpha_0$. Hence the eigenvalue
$\lambda = 2 + \Lambda_2 a^2 + \mathcal{O}(a^4)$ satisfies $\lambda < 2$ for every
$c \gtrsim 1$ and $\alpha > \alpha_0$.
\end{proof}

\begin{figure}[th]
\centering
\includegraphics[width=0.6\linewidth]{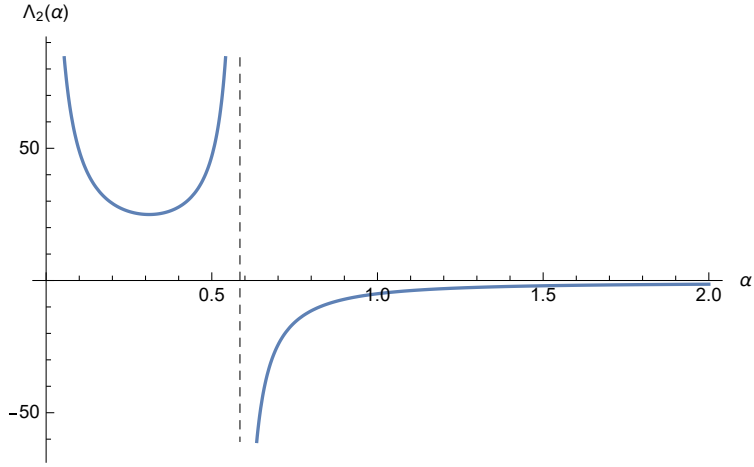}
\caption{Plot of $\Lambda_2$ versus $\alpha$.}
\label{fig:Lambda2_alpha}
\end{figure}

\begin{corollary}
\label{corollary-1}
For every $c \gtrsim 1$, the iterative method (\ref{T})
converges to $\phi$ in $H^{\alpha}_{\rm per}(-\pi,\pi)$ if $\alpha > \alpha_1$
and diverges from $\phi$ if $\alpha \in (\alpha_0,\alpha_1)$.
\end{corollary}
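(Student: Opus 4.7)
The plan is to translate Lemma \ref{lemma-3}, which describes $\sigma(\mathcal{L}_{c,\alpha}^{-1}\mathcal{H}_{c,\alpha})$ in the full space $L^2_{\rm per}(-\pi,\pi)$, into a description of $\sigma(\mathcal{L}_T)$ in the constrained space $L^2_c$, and then to invoke Theorem \ref{theorem-divergence-classic} (for divergence) or Theorem \ref{theorem-convergence-classic} (for convergence). From $\mathcal{L}_T = {\rm Id} - \mathcal{L}_{c,\alpha}^{-1}\mathcal{H}_{c,\alpha}$, Lemma \ref{lemma-3} yields that $\sigma(\mathcal{L}_T)$ in $L^2_{\rm per}(-\pi,\pi)$ consists of a sequence accumulating at $0$ together with the four simple eigenvalues $\{2,\,1,\,1-\lambda_1,\,1-\lambda_2\}$, where the first two come from the exact relations $\mathcal{L}_T\phi=2\phi$ and $\mathcal{L}_T\phi'=\phi'$ inherited from (\ref{eigenvalues-exact}).

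First I would verify that $\mathcal{L}_T$ leaves $L^2_c$ invariant, which follows from the self-adjoint identities $\langle \phi^2,\mathcal{L}_T\omega\rangle = 2\langle\phi^2,\omega\rangle$ and $\langle \phi\phi',\mathcal{L}_T\omega\rangle = -\langle \phi\phi',\omega\rangle$ obtained from $\mathcal{L}_{c,\alpha}^{-1}(\phi^2) = \phi$ and $\mathcal{L}_{c,\alpha}^{-1}(2\phi\phi') = \phi'$. Next, I would show that the two eigenvalues $\{2,1\}$ disappear when $\mathcal{L}_T$ is restricted to $L^2_c$: indeed, Lemma \ref{proposition-negative} guarantees $\int_{-\pi}^\pi \phi^3\,dx\neq 0$ and $\int_{-\pi}^\pi \phi(\phi')^2\,dx\neq 0$ for every $\alpha\in(\alpha_0,2]$ with $\alpha\neq \alpha_1$ (and $c\gtrsim 1$), so that $\phi\not\perp \phi^2$ and $\phi'\not\perp \phi\phi'$, meaning $\phi,\phi'\notin L^2_c$. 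A short splitting argument (if $\mathcal{L}_T v=\mu v$ with $\mu\notin\{1,2\}$ and $v=a\phi+b\phi'+v_c$ with $v_c\in L^2_c$, matching coefficients forces $a=b=0$) shows that every eigenvalue of $\mathcal{L}_T$ other than $1$ and $2$ has its eigenvector already in $L^2_c$. Consequently,
\begin{equation*}
\sigma(\mathcal{L}_T|_{L^2_c}) \;=\; \{1-\lambda_1,\;1-\lambda_2\}\;\cup\;\{\text{sequence near }0\}.
\end{equation*}

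Now I would simply read off the disk condition. The cluster near $0$ is harmless, and for $c\gtrsim 1$ Lemma \ref{lemma-3} gives $\lambda_2<2$ and $\lambda_2\to 2>0$, hence $|1-\lambda_2|<1$ automatically. The decisive eigenvalue is $1-\lambda_1$. If $\alpha>\alpha_1$, Lemma \ref{lemma-3} gives $\lambda_1\in(0,1)$, whence $|1-\lambda_1|<1$, so $\sigma(\mathcal{L}_T|_{L^2_c})$ lies strictly inside the unit disk; combined with the nondegeneracy $\int \phi(\phi')^2\,dx\neq 0$ from Lemma \ref{proposition-negative}, Theorem \ref{theorem-convergence-classic} yields convergence of $\{w_n\}$ to a translate of $\phi$ in $H^{\alpha}_{\rm per}(-\pi,\pi)$. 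If instead $\alpha\in(\alpha_0,\alpha_1)$, Lemma \ref{lemma-3} gives $\lambda_1<0$, so $1-\lambda_1>1$ provides an eigenvalue of $\mathcal{L}_T|_{L^2_c}$ outside the unit disk, and Theorem \ref{theorem-divergence-classic} produces an initial guess $w_0$ from which the iteration diverges.

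The only genuinely nontrivial point is the reduction to $L^2_c$: one has to be careful that no spectral content is created or lost when restricting, and that the eigenvectors for $\lambda_1$ and $\lambda_2$ constructed perturbatively in Lemma \ref{lemma-3} are genuinely compatible with the two constraints defining $L^2_c$. This is handled by the invariance computation and the splitting argument above, both of which rely only on the exact identities $\mathcal{L}_{c,\alpha}^{-1}(\phi^2)=\phi$ and $\mathcal{L}_{c,\alpha}^{-1}(2\phi\phi')=\phi'$ and on the nondegeneracy hypotheses from Lemma \ref{proposition-negative}; everything else in the argument is a direct quotation of previous results.
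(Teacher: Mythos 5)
Your argument is correct and follows essentially the same route as the paper: both translate Lemma \ref{lemma-3} through $\mathcal{L}_T = {\rm Id} - \mathcal{L}_{c,\alpha}^{-1}\mathcal{H}_{c,\alpha}$, discard the eigenvalues $2$ and $1$ of $\mathcal{L}_T$ via the two constraints, and appeal to Theorems \ref{theorem-convergence-classic} and \ref{theorem-divergence-classic}; where the paper places the unstable eigenfunction in $L^2_c$ using the $\mathcal{L}_{c,\alpha}$-orthogonality of eigenfunctions of (\ref{gEp}) for distinct eigenvalues, you use invariance of $L^2_c$ under $\mathcal{L}_T$ plus a direct-sum splitting, which is an equivalent mechanism. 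The only slip is the sign in $\langle \phi\phi', \mathcal{L}_T\omega\rangle = \langle \phi\phi',\omega\rangle$ (not $-\langle\phi\phi',\omega\rangle$), which is harmless for the invariance claim.
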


\begin{proof}
Assumptions (\ref{assumption-on-phi}) used in Theorems
\ref{theorem-divergence-classic} and \ref{theorem-convergence-classic}
have been verified for $c \gtrsim 1$ in Lemma \ref{proposition-negative}.

If $\alpha > \alpha_1$, then $\lambda_1 \in (0,1)$ for $c \gtrsim 1$ by Lemma \ref{lemma-3}.
By using the representation (\ref{operator-L-T}) and the count of eigenvalues
of the generalized eigenvalue problem (\ref{gEp}) in Lemma \ref{lemma-3},
we can see that $\sigma(\mathcal{L}_T)$ in $L^2_{\rm per}(-\pi,\pi)$ consists of a countable sequence
of eigenvalues in a neighborhood of $0$ and converging to $0$
for every $c \gtrsim 1$, two simple eigenvalues inside the interval $(-1,1)$,
and two additional simple eigenvalues: $1$ related to the eigenfunction $\phi'$
and $2$ related to the eigenfunction $\phi$. The two constraints in (\ref{constrained-space})
remove the latter two eigenvalues so that the operator
$\mathcal{L}_T$ is a strict contraction in $L^2_c$ for every $c \gtrsim 1$ if $\alpha > \alpha_1$.
Convergence of the iterative method (\ref{T}) for $\alpha > \alpha_1$ follows by
Theorem \ref{theorem-convergence-classic}.

If $\alpha \in (\alpha_0,\alpha_1)$, then $\lambda_1 < 0$ for $c \gtrsim 1$ by Lemma \ref{lemma-3}.
Then, $\sigma(\mathcal{L}_T)$ in $L^2_{\rm per}(-\pi,\pi)$ consists of a countable sequence
of eigenvalues in a neighborhood of $0$ and converging to $0$
for every $c \gtrsim 1$, one simple eigenvalue inside the interval $(-1,1)$,
simple eigenvalue $1$ related to the eigenfunction $\phi'$,
simple eigenvalue $2$ related to the eigenfunction $\phi$,
and an additional simple eigenvalue bigger than $1$ with an odd eigenfunction denoted by $v_*$.
Because of the orthogonality conditions
$$
\langle  \mathcal{L}_{c,\alpha} v_j, v_k \rangle = 0, \quad j \neq k,
$$
between eigenfunctions $v_j$ and $v_k$ of the generalized eigenvalue problem (\ref{gEp}) for distinct eigenvalues,
we verify that $\langle \phi^2, v_* \rangle = \langle \phi \phi', v_* \rangle = 0$, which implies that $v_* \in L^2_c$. Therefore,
$\sigma(\mathcal{L}_T)$ in $L^2_c$ contains exactly one eigenvalue outside the unit disk
for every $c \gtrsim 1$ if $\alpha \in (\alpha_0,\alpha_1)$.
Divergence of the iterative method (\ref{T}) for $\alpha \in (\alpha_0,\alpha_1)$ follows by Theorem \ref{theorem-divergence-classic}.
\end{proof}

\begin{remark}
Since the unstable eigenfunction $v_*$ is odd, divergence of the iterative method (\ref{T})
for $\alpha \in (\alpha_0,\alpha_1)$ is only observed if the initial guess $w_0 \in H^{\alpha}_{\rm per}(-\pi,\pi)$
is not even but of a general form.
\end{remark}

Although Theorem \ref{theorem-main-1} follows already from Corollary \ref{corollary-1},
we would like to add few more details on the eigenfunctions
of the generalized eigenvalue problem (\ref{gEp}).

The numbers of negative eigenvalues of operators $\mathcal{L}_{c,\alpha}$ and $\mathcal{H}_{c,\alpha}$
are affected by the constraint $\langle \phi^2, \alpha \rangle = 0$ in (\ref{constrained-space}).
As is well-known (see, e.g., Theorem 4.1 in \cite{pel-book}), if $n$ is the number of negative
eigenvalues of a self-adjoint invertible operator $\mathcal{L}$ in a Hilbert space and if
$\langle \mathcal{L}^{-1} \phi^2, \phi^2 \rangle < 0$, then the restriction of the self-adjoint
operator to the constraint $\langle \phi^2, \alpha \rangle = 0$ has one less negative eigenvalues
$n-1$. We compute:
\begin{equation}
\label{cube-constraint}
\langle \mathcal{L}_{c,\alpha}^{-1} \phi^2, \phi^2 \rangle = \langle \phi, \phi^2 \rangle, \quad
\langle \mathcal{H}_{c,\alpha}^{-1} \phi^2, \phi^2 \rangle = -\langle \phi, \phi^2 \rangle.
\end{equation}
By Lemma \ref{proposition-negative}, we have $\langle \phi, \phi^2 \rangle = \int_{-\pi}^{\pi} \phi(x)^3 dx < 0$
for every $c \gtrsim 1$ if $\alpha > \alpha_0$. Therefore,
$\mathcal{L}_{c,\alpha}$ restricted to $L^2_c$ has one less negative eigenvalue compared to $\mathcal{L}_{c,\alpha}$
in $L^2_{\rm per}(-\pi,\pi)$, whereas $\mathcal{H}_{c,\alpha}$ restricted to $L^2_c$ has still one simple negative eigenvalue.
In the space of even functions, $\mathcal{L}_{c,\alpha}$ and $\mathcal{H}_{c,\alpha}$ restricted to $L^2_c$ have only
one simple negative eigenvalue. By Theorem 1 in \cite{ChugPel}, the generalized eigenvalue problem (\ref{gEp})
admits one of the following in the subspace of even functions in $L^2_{\rm per}(-\pi,\pi)$:
\begin{itemize}
\item two simple negative eigenvalues $\lambda$ with the two eigenfunctions $v$
for which the sign of $\langle \mathcal{L}_{c,\alpha} v, v \rangle$ is opposite to the sign
of $\langle \mathcal{H}_{c,\alpha} v, v \rangle$;
\item a simple positive eigenvalue $\lambda$ with the eigenfunction $v$
for which the sign of $\langle \mathcal{L}_{c,\alpha} v, v \rangle$ and
$\langle \mathcal{H}_{c,\alpha} v, v \rangle$ are negative;
\item a double defective real eigenvalue $\lambda$ with only one eigenfunction $v$
for which $\langle \mathcal{L}_{c,\alpha} v, v \rangle = \langle \mathcal{H}_{c,\alpha} v, v \rangle = 0$;
\item a complex-conjugate pair of eigenvalues $\lambda$.
\end{itemize}
The following result shows that the second option from the list above is true if $\alpha > \alpha_1$ and $c \gtrsim 1$.

\begin{lemma}
\label{lemma-count}
For every $c \gtrsim 1$ and $\alpha > \alpha_1$, the generalized eigenvalue problem
(\ref{gEp}) admits
\begin{itemize}
\item a simple positive eigenvalue $\lambda$ with the eigenfunction $v$
for which the sign of $\langle \mathcal{L}_{c,\alpha} v, v \rangle$ and
$\langle \mathcal{H}_{c,\alpha} v, v \rangle$ are negative;
\item a simple negative eigenvalue $\lambda$ with the eigenfunction $v$
for which the sign of $\langle \mathcal{L}_{c,\alpha} v, v \rangle$ is negative;
\item a simple zero eigenvalue with the eigenfunction $v$
for which the sign of $\langle \mathcal{L}_{c,\alpha} v, v \rangle$ is negative.
\end{itemize}
The eigenfunction $v$ for all other eigenvalues corresponds to
positive values of $\langle \mathcal{L}_{c,\alpha} v, v \rangle$ and
$\langle \mathcal{H}_{c,\alpha} v, v \rangle$.
\end{lemma}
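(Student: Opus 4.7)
The plan is to identify, among all eigenpairs of (\ref{gEp}), the three with $\langle \mathcal{L}_{c,\alpha} v, v\rangle < 0$ as $(-1,\phi)$, $(0,\phi')$ and the even branch $\lambda_2$ from Lemma \ref{lemma-3}, and to verify that the odd branch $\lambda_1$ and the Fourier-mode cluster near $\lambda=1$ all have $\langle \mathcal{L}_{c,\alpha}v,v\rangle > 0$. The signs of $\langle \mathcal{H}_{c,\alpha}v, v\rangle$ then follow from $\langle\mathcal{H}_{c,\alpha}v,v\rangle=\lambda\langle\mathcal{L}_{c,\alpha}v,v\rangle$ together with the known sign of $\lambda$.

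The two exact eigenpairs are handled directly. From (\ref{ode}), $\mathcal{L}_{c,\alpha}\phi=\phi^2$, so $\mathcal{H}_{c,\alpha}\phi=-\phi^2=-\mathcal{L}_{c,\alpha}\phi$, giving $\lambda=-1$ with $\langle \mathcal{L}_{c,\alpha}\phi,\phi\rangle=\int \phi^3\,dx<0$ by Lemma \ref{proposition-negative} (valid for $\alpha>\alpha_0$). Differentiating (\ref{ode}) and using $\mathcal{L}_{c,\alpha}=\mathcal{H}_{c,\alpha}+2\phi$ gives $\mathcal{L}_{c,\alpha}\phi'=2\phi\phi'$, so $\lambda=0$ with $\langle \mathcal{L}_{c,\alpha}\phi',\phi'\rangle=2\int \phi(\phi')^2\,dx<0$, which requires the threshold $\alpha>\alpha_1$ via Lemma \ref{proposition-negative}.

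For $\lambda_1$ and $\lambda_2$, I would rearrange $\mathcal{H}_{c,\alpha}-\lambda\mathcal{L}_{c,\alpha}=(1-\lambda)\mathcal{L}_{c,\alpha}-2\phi$ into $(1-\lambda)\mathcal{L}_{c,\alpha}v=2\phi v$, which yields
\[
\langle \mathcal{L}_{c,\alpha}v,v\rangle=\frac{2}{1-\lambda}\int_{-\pi}^{\pi}\phi v^2\,dx.
\]
Substituting the Stokes-type expansions of $v$ from the proof of Lemma \ref{lemma-3} (namely (\ref{expansion-even})--(\ref{corr-v-2}) at $\lambda_2=2+\mathcal{O}(a^2)$ and (\ref{expansion-odd}),(\ref{corr-v-3}) at $\lambda_1=\tfrac{2^{\alpha+1}-5}{2^{\alpha+1}-3}+\mathcal{O}(a^2)$), together with $\phi=a\cos x+a^2\phi_2+\mathcal{O}(a^3)$, the $\mathcal{O}(a)$ contribution to $\int\phi v^2\,dx$ vanishes by parity, and a direct calculation gives the leading $\mathcal{O}(a^2)$ coefficient as $\tfrac{3\pi (2^{\alpha+1}-3)}{4(2^{\alpha}-1)}a^2$ for the even branch and $\tfrac{\pi(2^{\alpha+1}-5)}{4(2^{\alpha}-1)}a^2$ for the odd branch, both strictly positive under $\alpha>\alpha_1>\alpha_0$. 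Combined with $1-\lambda_2\to -1<0$ and $1-\lambda_1=2/(2^{\alpha+1}-3)>0$, this gives $\langle \mathcal{L}_{c,\alpha}v,v\rangle<0$ at $\lambda_2$ and $\langle \mathcal{L}_{c,\alpha}v,v\rangle>0$ at $\lambda_1$ for $c\gtrsim 1$.

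For the countable cluster near $\lambda=1$, each eigenfunction bifurcates from a Fourier mode $\cos(nx)$ or $\sin(nx)$ with $|n|\ge 2$, so $\mathcal{L}_{c,\alpha}v=(n^{\alpha}-c)\cos(nx)+\mathcal{O}(a)$ (or the sine analogue), giving $\langle \mathcal{L}_{c,\alpha}v,v\rangle=\pi(n^{\alpha}-c)+\mathcal{O}(a)>0$ uniformly on $|n|\ge 2$ for $c\gtrsim 1$ and sufficiently small $a$. As a consistency check, the three eigenfunctions with $\langle\mathcal{L}_{c,\alpha}v,v\rangle<0$ match the total negative inertia $n(\mathcal{L}_{c,\alpha})=3$ arising from the Fourier modes $n=0,\pm 1$, which together with the Chugunova--Pelinovsky enumeration recorded just before the lemma rules out any additional negative-sign directions. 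The main technical obstacle will be the clean $\mathcal{O}(a^2)$ evaluation for the odd branch, since the cancellation of the $\mathcal{O}(a)$ piece forces one to carry the correction $v_1=-\sin(2x)/((\lambda-1)(2^{\alpha}-1))$ and to simplify using $\lambda_1-1=-2/(2^{\alpha+1}-3)$ before the defining combination $2^{\alpha+1}-5$ (and hence the threshold $\alpha_1$) appears in the final expression; the algebra is parallel to, but longer than, the Stokes computation in the proof of Lemma \ref{lemma-3}.
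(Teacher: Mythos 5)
Your proposal is correct, and the conclusions and the key $\mathcal{O}(a^2)$ coefficients you report agree with the paper's: your even-branch value $\tfrac{2}{1-\lambda}\cdot\tfrac{3\pi(2^{\alpha+1}-3)}{4(2^{\alpha}-1)}a^2$ at $\lambda=2$ equals the paper's $-\tfrac{3\pi a^2(2^{\alpha+1}-3)}{2(2^{\alpha}-1)}$, and your odd-branch value at $\lambda_1$ reproduces the paper's $\tfrac{\pi a^2(2^{\alpha+1}-3)(2^{\alpha+1}-5)}{4(2^{\alpha}-1)}$. The computational engine is the same as the paper's (the Stokes expansions of the eigenfunctions from Lemma \ref{lemma-3}, evaluated at $\mathcal{O}(a^2)$), but you organize it differently in two respects. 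First, the paper treats $\lambda=-1$ and $\lambda=0$ by the same perturbative expansions as $\lambda_1,\lambda_2$, whereas you use the exact identities $\mathcal{L}_{c,\alpha}\phi=\phi^2$ and $\mathcal{L}_{c,\alpha}\phi'=2\phi\phi'$ to reduce these two cases to the sign conditions of Lemma \ref{proposition-negative}; this is cleaner and makes transparent why the threshold $\alpha_1$ enters only through the $\lambda=0$ eigenfunction, and it would survive beyond $c\gtrsim 1$ wherever the integrals keep their signs. Second, the paper computes $\mathcal{L}_{c,\alpha}v$ explicitly and pairs it with $v$, while you use the rearrangement $(1-\lambda)\mathcal{L}_{c,\alpha}v=2\phi v$ to reduce everything to $\int\phi v^2\,dx$, which shortens the algebra. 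For the cluster near $\lambda=1$ your direct perturbative claim is stated with the same (minimal) level of justification as the paper's one-sentence treatment; your closing inertia count via $n(\mathcal{L}_{c,\alpha})=3$ and Theorem 1 of \cite{ChugPel} is in fact the cleaner way to make that step rigorous, since it sidesteps any uniformity-in-$n$ issue, and I would promote it from a consistency check to the actual argument.
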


\begin{proof}
We utilize the perturbative expansions in the proof of Lemma \ref{lemma-3}.
For the even expansion (\ref{expansion-even}), we obtain
$$
\mathcal{L}_{c,\alpha} v = -\frac{a}{\lambda - 1} \left[ 1 + \cos(2x) \right] +
\frac{(3-\lambda) a^2}{2(2^\alpha-1) (\lambda - 1)^2} \cos(3x) -\left(1-\frac{1}{2(2^\alpha-1)}\right)a^2 \cos(x) + \mathcal{O}(a^3).
$$
By evaluating elementary integrals, we obtain
$$
\langle \mathcal{L}_{c,\alpha} v, v \rangle = - \frac{\pi a^2(2^{\alpha+1}-3)}{2(\lambda - 1)^2(2^\alpha-1)} \left[ 2 + (\lambda - 1)^2 \right] + \mathcal{O}(a^3).
$$
If $\alpha > \alpha_0$, then $\langle \mathcal{L}_{c,\alpha} v, v \rangle$ is negative
for both roots $\{ -1,2\}$ of the quadratic equation
$\lambda ( \lambda - 1) = 2$. Since $\langle \mathcal{H}_{c,\alpha} v, v \rangle = \lambda
\langle \mathcal{L}_{c,\alpha} v, v \rangle$, the eigenvalue at $\lambda = 2 + \mathcal{O}(a^2)$
corresponds to the simple positive eigenvalue, for which both quadratic forms are negative,
whereas the eigenvalue $\lambda = -1$ corresponds to the negative eigenvalue, for which
only $\langle \mathcal{L}_{c,\alpha} v, v \rangle$ is negative and $\langle \mathcal{H}_{c,\alpha} v, v \rangle$ is positive.

For the odd expansion (\ref{expansion-odd}), we obtain
$$
\mathcal{L}_{c,\alpha} v = -\frac{a}{\lambda - 1} \sin(2x) +
\frac{(3-\lambda) a^2}{2(2^\alpha-1) (\lambda - 1)^2} \sin(3x) - \left(1-\frac{1 }{2(2^\alpha-1)}\right)a^2 \sin(x) + \mathcal{O}(a^3).
$$
By evaluating elementary integrals, we obtain
$$
\langle \mathcal{L}_{c,\alpha} v, v \rangle = \frac{\pi a^2}{2(2^\alpha-1) (\lambda - 1)^2} \left[ 2 - (2^{\alpha+1}-3) (\lambda - 1)^2 \right] + \mathcal{O}(a^3),
$$
The sign of the quadratic forms depends on the value of $\alpha$ for the two roots
$\{ 0, \frac{2^{\alpha+1}-5}{2^{\alpha+1}-3}\}$ of the quadratic equation
$\lambda[(2^{\alpha+1}-3)\lambda + (5-2^{\alpha+1})] = 0$.
If $\alpha > \alpha_1$, then $\langle \mathcal{L}_{c,\alpha} v, v \rangle$ is negative for
the eigenvalue $\lambda = 0$, for which $\langle \mathcal{H}_{c,\alpha} v, v \rangle$ is zero,
and positive for the eigenvalue $\lambda = \frac{2^{\alpha+1}-5}{2^{\alpha+1}-3} + \mathcal{O}(a^2)$,
for which $\langle \mathcal{H}_{c,\alpha} v, v \rangle$ is also positive.

Every other eigenvalue bifurcating from $\lambda = 1$
corresponds to the positive eigenvalues, for which both quadratic forms $\langle \mathcal{L}_{c,\alpha} v, v \rangle$
and $\langle \mathcal{H}_{c,\alpha} v, v \rangle$ are positive.
\end{proof}

\begin{remark}
For $\alpha \in (\alpha_0,\alpha_1)$ the eigenvalue
$\lambda = \frac{2^{\alpha+1}-5}{2^{\alpha+1}-3} + \mathcal{O}(a^2)$ is negative
and the third item of Lemma \ref{lemma-count} changes as follows.
The sign of $\langle \mathcal{L}_{c,\alpha} v, v \rangle$ is now positive for the eigenvalue $\lambda = 0$
and negative for the eigenvalue $\lambda = \frac{2^{\alpha+1}-5}{2^{\alpha+1}-3} + \mathcal{O}(a^2)$.
Nevertheless, we still count three eigenfunctions $v$ of the generalized eigenvalue problem (\ref{gEp})
with negative values of $\langle \mathcal{L}_{c,\alpha} v, v \rangle$ and one eigenfunction
$v$ with positive values of $\langle \mathcal{H}_{c,\alpha} v, v \rangle$, in agreement with
Theorem 1 in \cite{ChugPel}.
\end{remark}

\subsection{Case $c > 2^{\alpha}$}
\label{sec-3-2}

The first resonance occurs at $c = 2^{\alpha}$, when a double eigenvalue of the operator $\mathcal{L}_{c,\alpha}$ crosses zero and become
a negative eigenvalue for $c > 2^{\alpha}$. Some eigenvalues of the operator $\mathcal{L}_{c,\alpha}^{-1} \mathcal{H}_{c,\alpha}$ may diverge
as $c \to 2^{\alpha}$ and the conclusion on convergence of the iterative method (\ref{T}) may change after the resonance.
Here we prove that the iterative method (\ref{T}) diverges for every $c > 2^{\alpha}$ and $\alpha \in (\alpha_0,2]$,
for which $\mathcal{L}_{c,\alpha}^{-1}$ exists. Compared to the perturbative results in Section \ref{sec-3-1}, 
the restriction $\alpha \leq 2$ is necessary to apply the results of \cite{JH1} in the proof of Lemma \ref{lemma-11}. 
The following lemma specifies the number of negative eigenvalues of
$\mathcal{L}_{c,\alpha}^{-1} \mathcal{H}_{c,\alpha}$ in $L^2_{\rm per}(-\pi,\pi)$.

\begin{lemma}
\label{lemma-4}
For every $c > 2^{\alpha}$ and $\alpha \in (\alpha_0,2]$, for which $\mathcal{L}_{c,\alpha}^{-1}$ exists,
$\sigma(\mathcal{L}_{c,\alpha}^{-1} \mathcal{H}_{c,\alpha})$
in $L^2_{\rm per}(-\pi,\pi)$ includes $N$ negative
eigenvalues (counting with their algebraic multiplicities) with $N \geq 1$
in addition to the simple negative eigenvalue $-1$.
\end{lemma}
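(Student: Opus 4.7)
The plan is to count the negative eigenvalues of the generalized problem (\ref{gEp}) by a Morse-type deformation argument on the self-adjoint pencil
\begin{equation*}
A(s) := \mathcal{H}_{c,\alpha} + s \mathcal{L}_{c,\alpha} = (1+s) \mathcal{L}_{c,\alpha} - 2 \phi, \qquad s \in [0, \infty).
\end{equation*}
Each value $s_0 > 0$ at which $A(s_0)$ is not invertible corresponds to a negative eigenvalue $\lambda = -s_0$ of $\mathcal{L}_{c,\alpha}^{-1} \mathcal{H}_{c,\alpha}$ with matching algebraic multiplicity. The inertias at the endpoints are already available: Lemma \ref{lemma-11} gives that $\mathcal{H}_{c,\alpha}$ has exactly one negative eigenvalue for every $c > 1$ and $\alpha \in (\alpha_0,2]$, while the Fourier diagonalization (\ref{spectrum-L}) yields exactly $1 + 2K$ negative eigenvalues of $\mathcal{L}_{c,\alpha}$ whenever $K^\alpha < c < (K+1)^\alpha$ with $K \in \mathbb{N}$. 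In particular, $n_-(\mathcal{L}_{c,\alpha}) \geq 5$ for every $c > 2^\alpha$ at which $\mathcal{L}_{c,\alpha}^{-1}$ exists.

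The key step is to track the integer-valued function $s \mapsto n_-(A(s))$ counting negative eigenvalues of $A(s)$ with multiplicity. Each $A(s)$ is self-adjoint with compact resolvent and bounded below, so by Theorem VII.1.7 in \cite{Kato} its eigenvalues can be organized into real-analytic curves $\{\mu_k(s)\}_{k \in \mathbb{N}}$. Consequently $n_-(A(s))$ is piecewise constant with jumps confined to the critical values $s_0$ where some $\mu_k(s_0) = 0$, and each jump is bounded in magnitude by the multiplicity of $0$ in $\sigma(A(s_0))$. As $s \to \infty$, the negative and positive eigenvalues of $(1+s) \mathcal{L}_{c,\alpha}$ separate to $\mp \infty$, so the bounded multiplicative perturbation $-2\phi$ is eventually dominated and $n_-(A(s)) = n_-(\mathcal{L}_{c,\alpha})$ for all sufficiently large $s$. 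Combined with $n_-(A(0)) = 1$, the net change forces the total algebraic multiplicity of negative eigenvalues of $\mathcal{L}_{c,\alpha}^{-1} \mathcal{H}_{c,\alpha}$ to be at least $n_-(\mathcal{L}_{c,\alpha}) - 1 \geq 4$. Subtracting the simple persistent eigenvalue $\lambda = -1$ from (\ref{eigenvalues-exact}) (whose one-dimensional eigenspace is spanned by $\phi$) leaves at least three additional negative eigenvalues, and in particular $N \geq 1$ as claimed.

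The main obstacle is the careful bookkeeping at degenerate critical values $s_0$ where several analytic curves $\mu_k(s)$ pass through $0$ simultaneously, or where $0$ appears as a defective eigenvalue of $A(s_0)$. In such cases the jump of $n_-(A(s))$ must be matched against the algebraic multiplicity of $-s_0$ as a pencil eigenvalue, not merely its geometric multiplicity. The Hellmann--Feynman identity $\mu_k'(s_0) = \langle v_k, \mathcal{L}_{c,\alpha} v_k \rangle / \| v_k \|^2$ shows that at a simple crossing the branch contributes $\pm 1$ according to the sign of $\langle \mathcal{L}_{c,\alpha} v_k, v_k \rangle$. Summing the absolute values of the individual branch contributions at each critical value bounds the jump magnitude by the algebraic multiplicity of $-s_0$, which is precisely the argument needed to convert the net change $n_-(\mathcal{L}_{c,\alpha}) - n_-(\mathcal{H}_{c,\alpha})$ into a valid lower bound on the total algebraic count of negative pencil eigenvalues.
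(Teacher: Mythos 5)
Your argument is correct in outline but takes a genuinely different route from the paper. The paper restricts to the constrained space $L^2_c$ in (\ref{constrained-space}) (and to the even subspace), uses Theorem 4.1 of \cite{pel-book} to show that the two constraints remove at most one negative eigenvalue from either $\mathcal{L}_{c,\alpha}$ or $\mathcal{H}_{c,\alpha}$, and then cites Theorem 1 of \cite{ChugPel} to convert the resulting mismatch between $n_-(\mathcal{L}_{c,\alpha})$ and $n_-(\mathcal{H}_{c,\alpha})$ on $L^2_c$ into $N \geq 1$ negative eigenvalues of the pencil lying in $L^2_c$. You instead reprove the relevant instance of that count from scratch via the homotopy $A(s) = \mathcal{H}_{c,\alpha} + s \mathcal{L}_{c,\alpha}$, tracking $n_-(A(s))$ from $n_-(\mathcal{H}_{c,\alpha}) = 1$ at $s=0$ to $n_-(\mathcal{L}_{c,\alpha}) \geq 5$ as $s \to \infty$. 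This is self-contained, exposes the mechanism behind the cited theorem, and the endpoint identifications ($n_-(A(s)) \to n_-(\mathcal{L}_{c,\alpha})$ since $A(s)/(1+s) \to \mathcal{L}_{c,\alpha}$ in norm and $0 \notin \sigma(\mathcal{L}_{c,\alpha})$) are sound.

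Two bookkeeping points need repair. First, $s=0$ is itself a critical value of your homotopy because ${\rm ker}(\mathcal{H}_{c,\alpha}) = {\rm span}\{\phi'\}$: the zero eigenvalue of $A(0)$ may move downward for $s>0$ (it does whenever $\int_{-\pi}^{\pi} \phi (\phi')^2 dx < 0$, e.g.\ for $\alpha = 2$ by Lemma \ref{lemma-kdv}), so $n_-(A(0^+))$ may equal $2$ and the change attributable to crossings at $s_0 > 0$ is only $n_-(\mathcal{L}_{c,\alpha}) - 2 \geq 3$, not $4$; the conclusion $N \geq 1$ survives, but your stated count does not. Second, your last step subtracts the pencil eigenvalue $-1$ as if its algebraic multiplicity were one; that simplicity is asserted in the statement but is not verified for general $c > 2^{\alpha}$, and your argument does not supply it, so in principle a large Jordan structure at $-1$ could absorb your entire count. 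Both issues disappear if you run the identical homotopy on the constrained space $L^2_c$, where $\phi$ and $\phi'$ (hence the eigenvalues $-1$ and $0$) are absent and each of $n_-(\mathcal{L}_{c,\alpha})$, $n_-(\mathcal{H}_{c,\alpha})$ drops by at most one under the constraints; equivalently, observe that every pencil eigenfunction for an eigenvalue distinct from $-1$ and $0$ is $\mathcal{L}_{c,\alpha}$-orthogonal to $\phi$ and $\phi'$ and therefore already lies in $L^2_c$, which is also the form of the conclusion that Corollary \ref{corollary-2} actually uses.
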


\begin{proof}
It follows from (\ref{spectrum-L}) that for $c > 2^{\alpha}$,
$\sigma(\mathcal{L}_{c,\alpha})$ in $L^2_{\rm per}(-\pi,\pi)$ admits $n$ negative eigenvalues
(counting with their algebraic multiplicities) with $n \geq 5$. By Lemma \ref{lemma-11},
$\sigma(\mathcal{H}_{c,\alpha})$ in $L^2_{\rm per}(-\pi,\pi)$ admits only one simple negative eigenvalue
and the simple zero eigenvalue with an odd eigenfunction $\phi'$. In the space of even
functions, $\mathcal{H}_{c,\alpha}$ has only one simple negative eigenvalue and is invertible,
whereas $\mathcal{L}_{c,\alpha}$ has $n_{ev}$ negative eigenvalues with $n_{ev} \geq 3$.
Both $\mathcal{H}_{c,\alpha}$ and $\mathcal{L}_{c,\alpha}$ are self-adjoint in $L^2_{\rm per}(-\pi,\pi)$
with the domain in $H^{\alpha}_{\rm per}(-\pi,\pi)$, as well as in the corresponding
subspaces of even functions. By Theorem 4.1 in \cite{pel-book},
the constraints in $L^2_c$ may only reduce one negative eigenvalue in either
$\mathcal{L}_{c,\alpha}$ or $\mathcal{H}_{c,\alpha}$ (the choice between the two operators depends
on the sign of $\int_{-\pi}^{\pi} \phi^3(x) dx$). In either case,
by Theorem 1 in \cite{ChugPel}, there exist at least $N \geq 1$ negative eigenvalues $\lambda$
of the generalized eigenvalue problem (\ref{gEp}) in $L^2_c$.
\end{proof}

\begin{corollary}
\label{corollary-2}
Assume $\int_{-\pi}^{\pi} \phi^3 dx \neq 0$.
The iterative method (\ref{T}) diverges from $\phi$ for every $c > 2^{\alpha}$ 
and $\alpha \in (\alpha_0,2]$.
\end{corollary}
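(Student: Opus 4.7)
The plan is to apply Theorem~\ref{theorem-divergence-classic}, which reduces the divergence claim to producing a single eigenvalue of the linearized iterative operator $\mathcal{L}_T$ lying strictly outside the unit disk in $L^2_c$. By the representation (\ref{operator-L-T}), $\mathcal{L}_T = \mathrm{Id} - \mathcal{L}_{c,\alpha}^{-1}\mathcal{H}_{c,\alpha}$, so this is equivalent to exhibiting an eigenvalue $\lambda < 0$ of $\mathcal{L}_{c,\alpha}^{-1}\mathcal{H}_{c,\alpha}$ whose eigenfunction belongs to $L^2_c$; any such $\lambda$ maps to an eigenvalue $\mu = 1 - \lambda > 1$ of $\mathcal{L}_T$, which is manifestly outside the unit disk.

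The existence of such a $\lambda$ is precisely the content of Lemma~\ref{lemma-4}, which I would invoke directly. That lemma produces, for every $c > 2^{\alpha}$ and $\alpha \in (\alpha_0,2]$, at least $N \geq 1$ negative eigenvalues of the generalized problem (\ref{gEp}) in addition to the trivial eigenvalue $-1$ associated with the eigenfunction $\phi$. It is important here that the constrained count in the proof of Lemma~\ref{lemma-4} is run in $L^2_c$ itself, using Theorem~1 of \cite{ChugPel} together with the identities (\ref{cube-constraint}); the assumption $\int_{-\pi}^{\pi} \phi^3\,dx \neq 0$ is exactly what is needed both to justify the constrained index correction and to ensure that $\phi \notin L^2_c$, so that the distinguished eigenvalue $-1$ does not occupy any of the constrained negative slots. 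Consequently the $N$ negative eigenvalues guaranteed by Lemma~\ref{lemma-4} are genuinely new and their eigenfunctions lie in $L^2_c$.

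Putting these pieces together, at least one eigenvalue of $\mathcal{L}_T$ in $L^2_c$ satisfies $\mu > 1$, so Theorem~\ref{theorem-divergence-classic} yields initial data $w_0 \in H^{\alpha}_{\rm per}(-\pi,\pi)$ arbitrarily close to $\phi$ for which the iterates $\{w_n\}$ generated by (\ref{T}) fail to converge to $\phi$. There is no real analytic obstacle left at this stage: the delicate spectral bookkeeping — tracking how the negative eigenvalues of $\mathcal{L}_{c,\alpha}$ and $\mathcal{H}_{c,\alpha}$ interact with the two constraints defining $L^2_c$, and confirming that at least one of them persists through the constrained Pontryagin-space count — has already been performed in Lemma~\ref{lemma-4}. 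The corollary is thus essentially an application of the spectral mapping $\lambda \mapsto 1 - \lambda$ combined with Theorem~\ref{theorem-divergence-classic}.
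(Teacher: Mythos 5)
Your proposal is correct and follows essentially the same route as the paper: invoke Lemma~\ref{lemma-4} for the existence of $N \geq 1$ negative eigenvalues of $\mathcal{L}_{c,\alpha}^{-1}\mathcal{H}_{c,\alpha}$ in $L^2_c$, pass through the spectral mapping $\lambda \mapsto 1-\lambda$ furnished by the representation (\ref{operator-L-T}) to get eigenvalues of $\mathcal{L}_T$ larger than $1$, and conclude by Theorem~\ref{theorem-divergence-classic}. Your additional remarks on why the constrained count lives in $L^2_c$ and why $\int_{-\pi}^{\pi}\phi^3\,dx \neq 0$ matters are consistent with, and slightly more explicit than, the paper's one-line justification via the orthogonality of eigenfunctions.
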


\begin{proof}
It follows from Lemma \ref{lemma-4} and the representation (\ref{operator-L-T})
that $\sigma(\mathcal{L}_T)$ in $L^2_c$ includes $N \geq 1$ positive eigenvalues
larger than $1$. These eigenvalues of $\mathcal{L}_T$ outside the unit disk
correspond to the eigenfunctions in the constrained subspace (\ref{constrained-space}),
which satisfy the orthogonality conditions
$$
\langle \mathcal{L}_{c,\alpha} \phi, \alpha \rangle = \langle \mathcal{L}_{c,\alpha} \phi', \alpha \rangle = 0.
$$
Divergence of the iterative method (\ref{T}) follows by Theorem \ref{theorem-divergence-classic}.
\end{proof}

\begin{remark}
It follows from the proof of Lemma \ref{lemma-4} that the divergence of the iterative method (\ref{T})
for $c > 2^{\alpha}$ and $\alpha \in (\alpha_0,2]$ is observed if the initial guess $w_0 \in H^{\alpha}_{\rm per}(-\pi,\pi)$
is even.
\end{remark}

\subsection{Case $c \in (1,2^{\alpha})$}
\label{sec-3-3}

Here we address numerically convergence of the iterative method (\ref{T}) near the single-lobe periodic
wave for $c \in (1,2^{\alpha})$. For simplicity of computations, we only consider the classical KdV and BO equations.

For the KdV equation with $\alpha = 2$, we show that the method converges for
$c \gtrsim 1$ in agreement with Corollary \ref{corollary-1}. On the other hand,
we illustrate transition to instability at $c \approx 2.3$ and divergence
of the method for $c \gtrsim 2.3$, which persists until $c = 2^2 = 4$.

Figure \ref{fig:Eigenvalues_against_c1_37} shows eigenvalues of
the generalized eigenvalue problem (\ref{gEp})
computed numerically with the Fourier method for $c \in (1,4)$. Five largest and
five smallest eigenvalues of the operator $\mathcal{L}_{c,\alpha}^{-1}\mathcal{H}_{c,\alpha}$ are shown on
the left panel. In agreement with the result of Lemma \ref{lemma-3}, we observe eigenvalues $\lambda$
near points $\{-1,0, \frac{3}{5}, 2\}$ in addition to a countable sequence of eigenvalues
near $1$. The right panel zooms in eigenvalues near $c = 1$ and shows the asymptotic approximation
of the eigenvalue near $2$ given by (\ref{Lambda1}) and (\ref{Lambda2}) with $\alpha = 2$.

For  $c_* \approx 1.2$, two real eigenvalues coalesce to create a pair of complex eigenvalues
that exist for every $c > c_*$. This transformation of eigenvalues compared to the result of
Lemma \ref{lemma-count} for $c \gtrsim 1$ does not contradict to the count of eigenvalues in Theorem 1 of \cite{ChugPel}.
Figure \ref{ModulusComplexEvals} shows that $|1- \lambda|$ for the eigenvalues of
$\mathcal{L}_T$ remains inside the unit disk for $c\in (c_*, 4)$. Therefore,
the complex eigenvalue pair does not introduce additional instability to the iterative method.

\begin{figure}[h]
	\centering
	\includegraphics[width=0.45\linewidth]{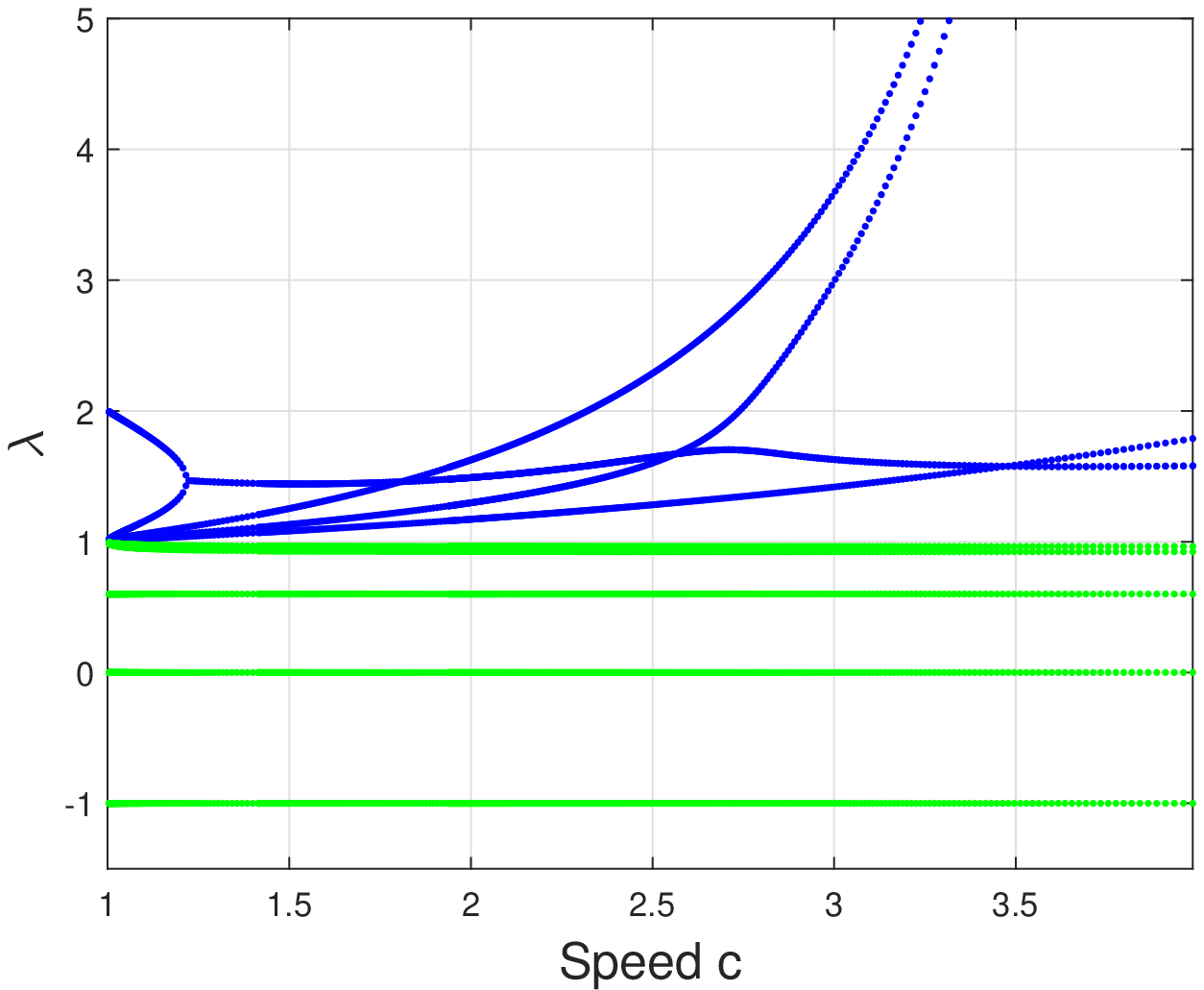}
	\includegraphics[width=0.45\linewidth]{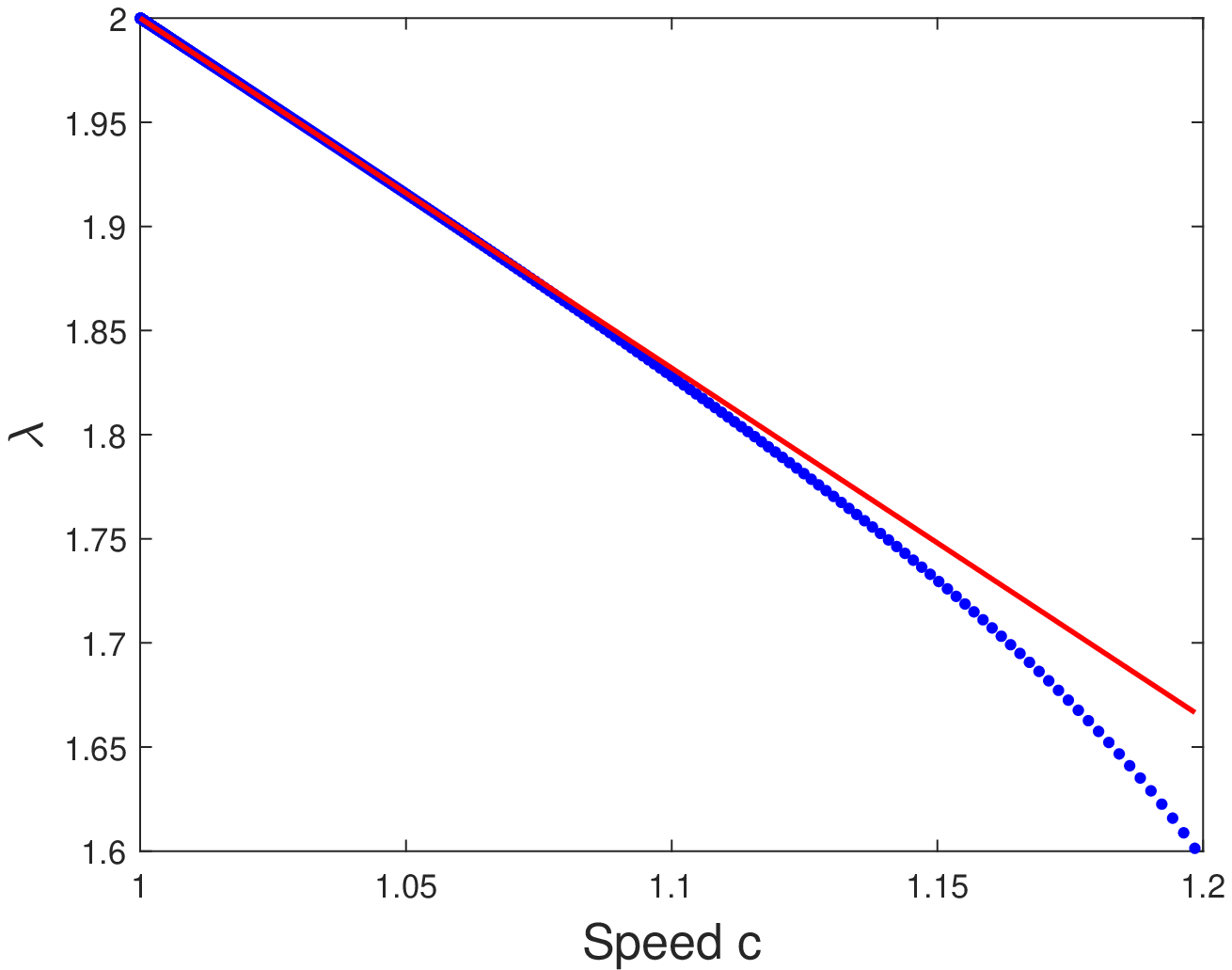}
	\caption{Left: Eigenvalues of the operator $\mathcal{L}_{c,\alpha}^{-1}\mathcal{H}_{c,\alpha}$ for $\alpha = 2$.
Right: Zoom in with the asymptotic dependence given by (\ref{Lambda1}) and (\ref{Lambda2}).}
	\label{fig:Eigenvalues_against_c1_37}
\end{figure}

\begin{figure}[h]
\centering
\includegraphics[width=0.5\linewidth]{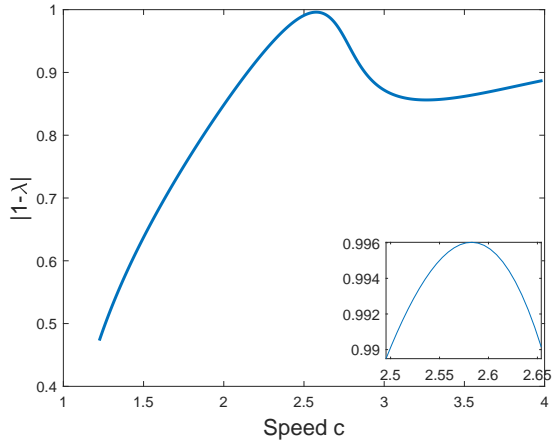}
\caption{The plot of $|1 - \lambda|$ for the complex eigenvalues $\lambda$.
The insert shows that the complex eigenvalues do not reach the boundary of the unit disk.}
\label{ModulusComplexEvals}
\end{figure}

For $c \in (1,c_{**})$ with $c_{**} \approx 2.3$, the spectrum of $\mathcal{L}_T$ in $L^2_c$ 
remain inside the unit disk for $c\in (1,c_{**})$.
However, the largest eigenvalue of $\mathcal{L}^{-1}_{c,\alpha}\mathcal{H}_{c,\alpha}$ crosses the level $2$ for $c = c_{**}$
and the corresponding eigenvalue of $\mathcal{L}_T$ is smaller than $-1$ for $c \in (c_{**},4)$.
This numerical result suggests that the iterative method (\ref{T}) converges for $c\in (1,c_{**})$
and diverge for $c \in (c_{**},4)$. Moreover, for $c_{***} \approx 2.7$, the second largest eigenvalue of
$\mathcal{L}^{-1}_{c,\alpha}\mathcal{H}_{c,\alpha}$ crosses the level $2$, hence the iterative method (\ref{T})
diverges with two unstable eigenvalues for $c \in (c_{***},4)$.

To illustrate convergence of the iterative method (\ref{T}) for $\alpha =2$, we
use the initial function
\begin{equation}
\label{initial-guess}
u_0(x)= a \cos(x)+ \frac{1}{2} a^2 \left(\cos(2x)-3\right) + \varepsilon \sin(x),
\end{equation}
where $a > 0$ and $\varepsilon \in \mathbb{R}$ are small parameters to our disposal.
Notice that we include the $\mathcal{O}(a^2)$ correction term of
the Stokes expansion (\ref{wave-expansion}) in the initial function (\ref{initial-guess}) to avoid
vanishing denominator in the Petviashvili quotient $M$ defined by \eqref{rule}.
Indeed, $\int_{\pi}^{\pi} \cos(x)^3 dx = 0$, whereas
$\int_{-\pi}^{\pi} \phi^3 dx < 0$ for every $c > 1$ and $\alpha = 2$
by Lemma \ref{lemma-kdv}. Computations reported below correspond to $a = 0.4$ and $\varepsilon = 0$;
we have checked that computations for other small values of $a$ and $\varepsilon$ return similar results.

We measure the computational errors in three ways:
the quantity $|1-M_n|$, where $M_n = M(u_n)$,
the distance between two successive approximations $\|u_{n+1}-u_{n}\|_{L^\infty}$,
and the residual error $\| c u_n + u_n'' + u_n^2 \|_{L^{\infty}}$.
If iterations do not converge, we stop the algorithm after 500 iterations.

Figure \ref{fig:Kdv-c2} shows the profile of the last iteration and the three
computational errors versus the number of iterations in the case $c = 2$.
It is seen that the iterative method (\ref{T}) converges to the single-lobe periodic wave,
in agreement with Corollary \ref{corollary-1}. Since
the exact periodic wave is known in \eqref{wave-explicit}--(\ref{speed-explicit}),
we can also compute the distance between the last iteration and the exact solution,
in which case we find $\|u-\phi\|_{L^{\infty}} \approx 2 \cdot 10^{-11}$.
If $\varepsilon \neq 0$ in the initial function (\ref{initial-guess}),
the convergence to the periodic wave is still observed but
the last iteration is shifted from $x = 0$, in agreement with Theorem \ref{theorem-convergence-classic}.

\begin{figure}[h]
	\centering
	\begin{subfigure}[t]{0.5\textwidth}
		\centering
		\includegraphics[width=0.8\linewidth]{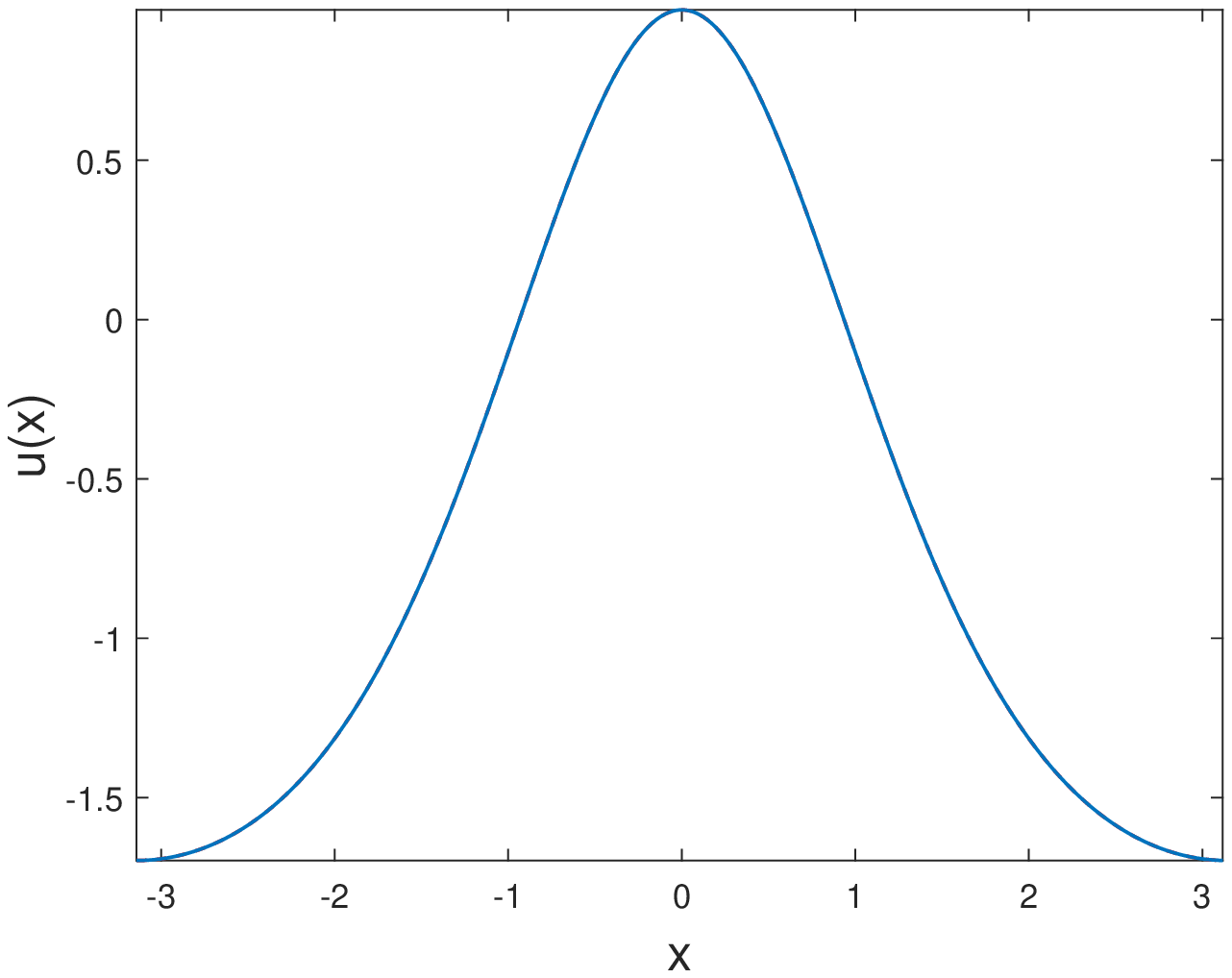}
	\end{subfigure}%
	\begin{subfigure}[t]{0.5\textwidth}
		\centering
		\includegraphics[width=0.8\linewidth]{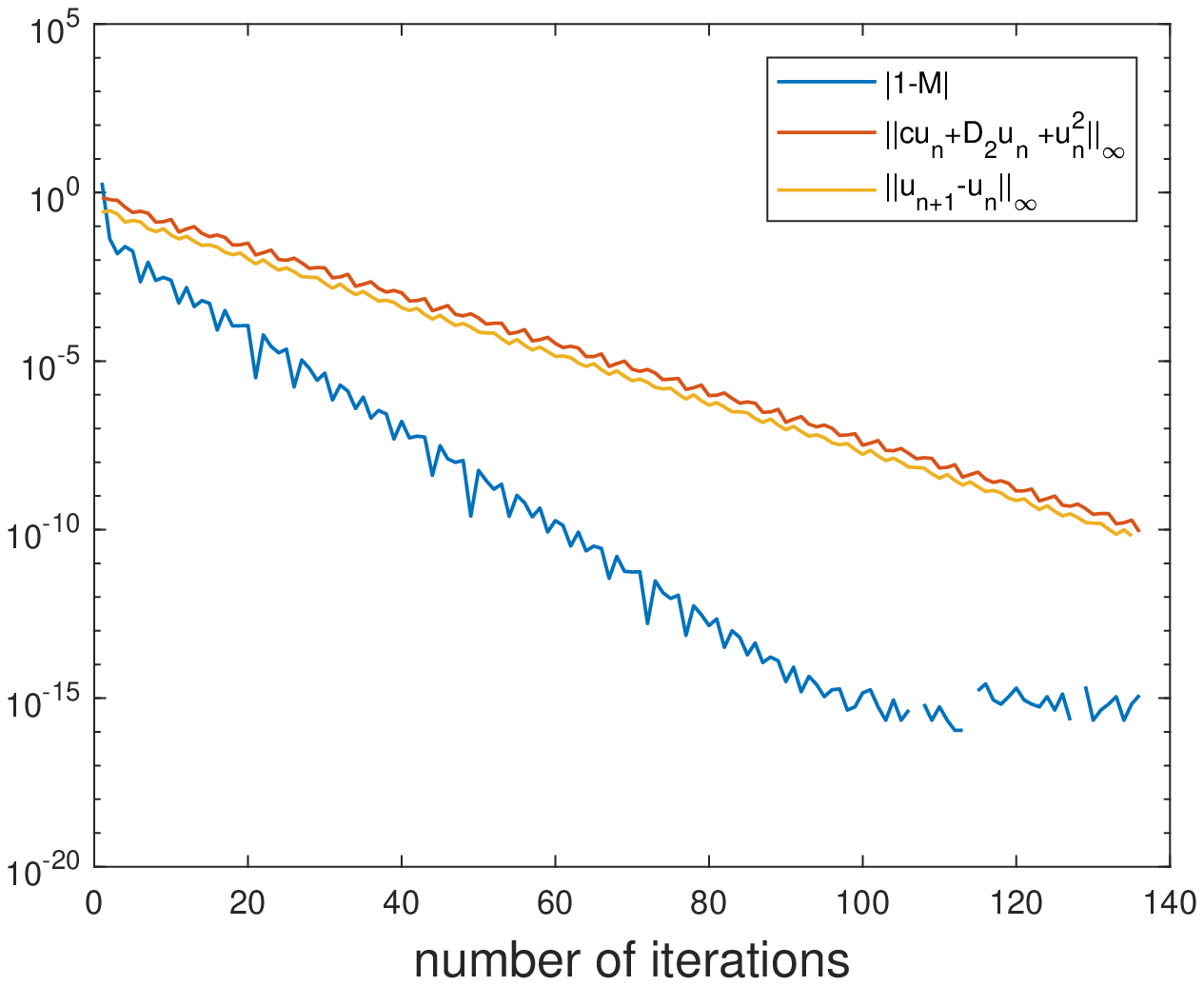}
	\end{subfigure}
		\caption{Iterations for $c = 2$ and $\alpha = 2$. (a) The last iteration versus $x$. (b) Computational errors versus $n$. }
		\label{fig:Kdv-c2}
\end{figure}
\begin{figure}[h]
	\begin{subfigure}[t]{0.5\textwidth}
		\centering
		\includegraphics[width=0.8\linewidth]{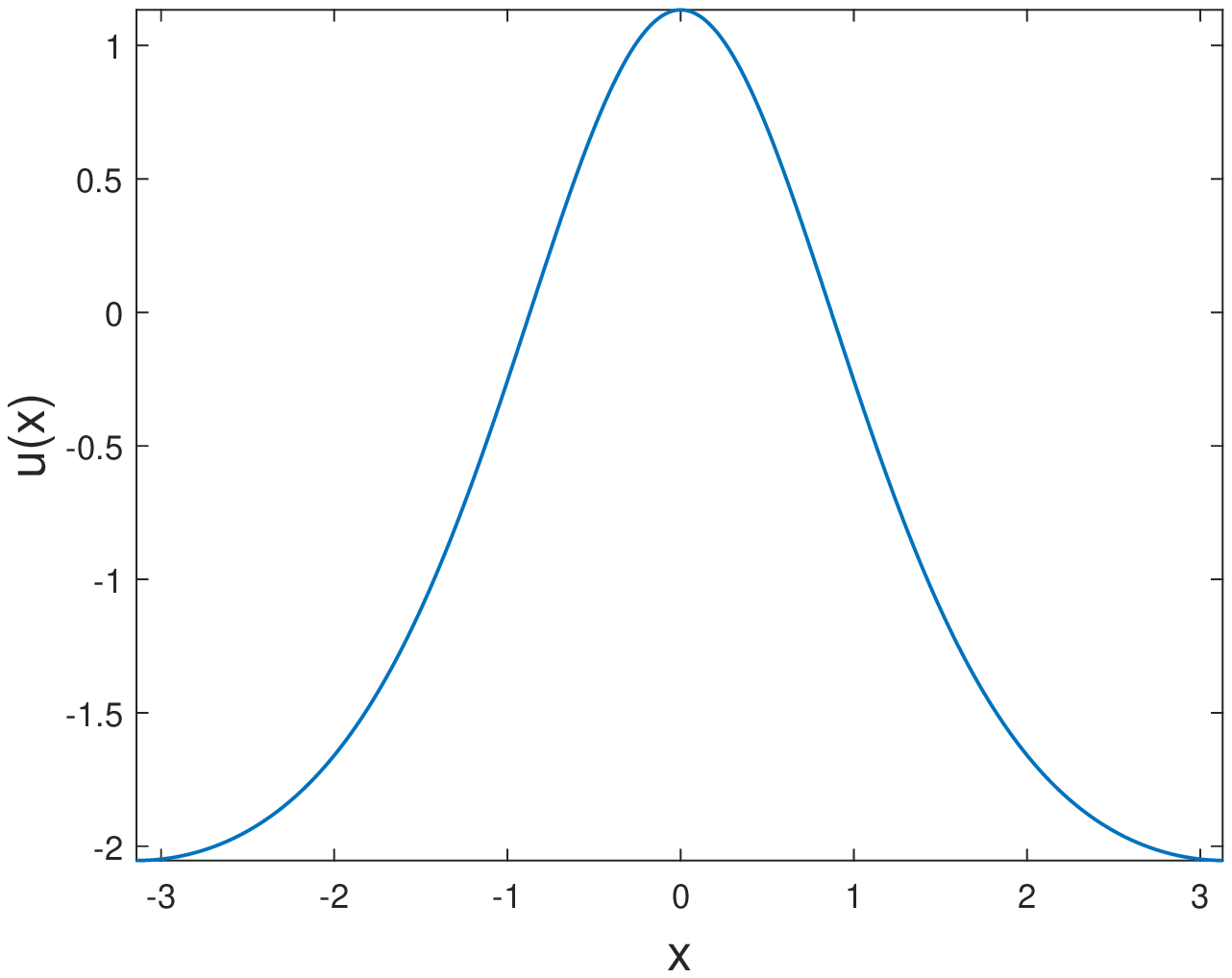}
	\end{subfigure}%
	\begin{subfigure}[t]{0.5\textwidth}
		\centering
		\includegraphics[width=0.8\linewidth]{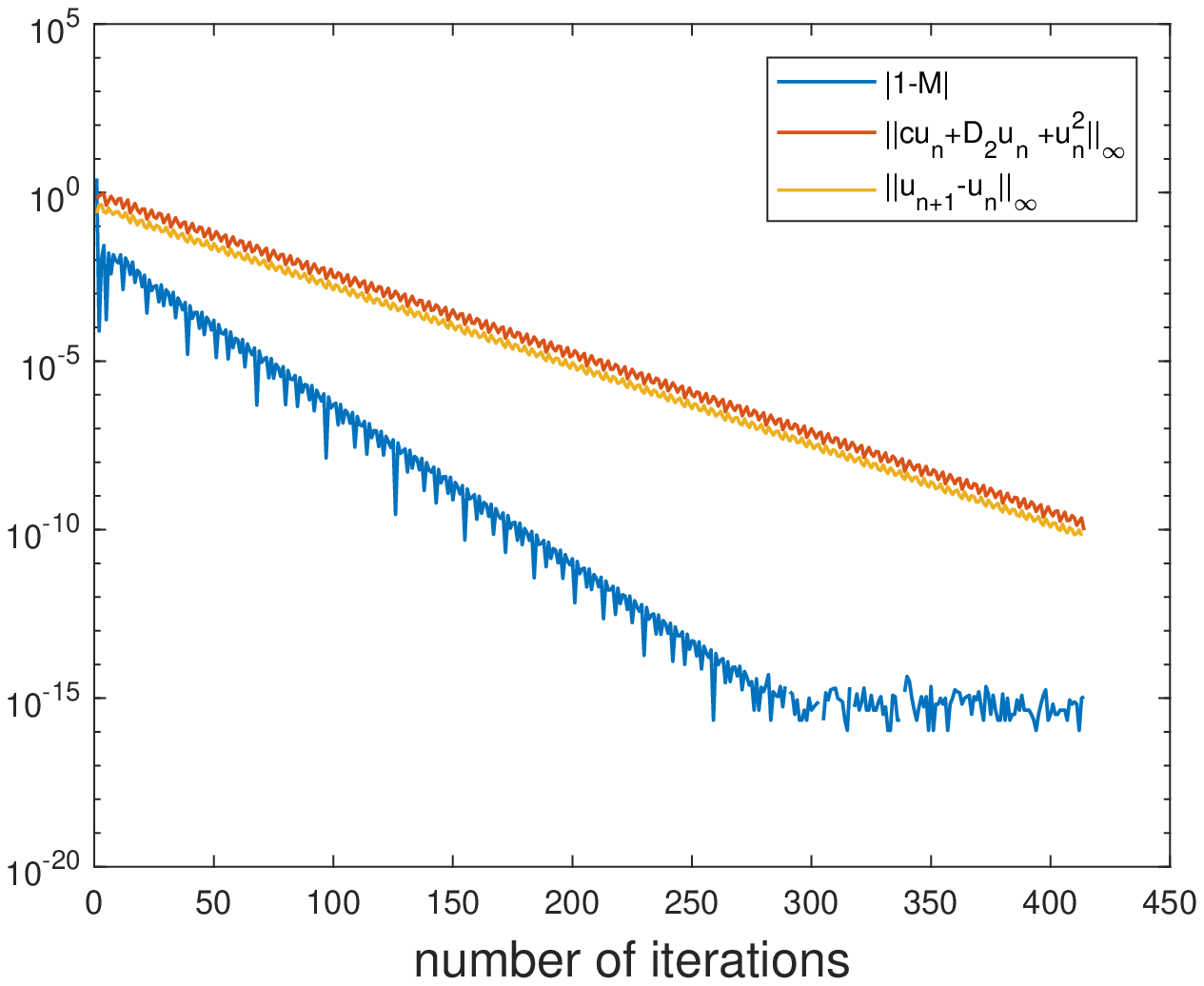}
	\end{subfigure}
\caption{Iterations for $c = 2.3$ and $\alpha = 2$. (a) The last iteration versus $x$. (b) Computational errors versus $n$. }
		\label{fig:Kdv-c23}
\end{figure}
\begin{figure}[h]
	\centering
	\begin{subfigure}[t]{0.5\textwidth}
		\centering
		\includegraphics[width=0.7\linewidth]{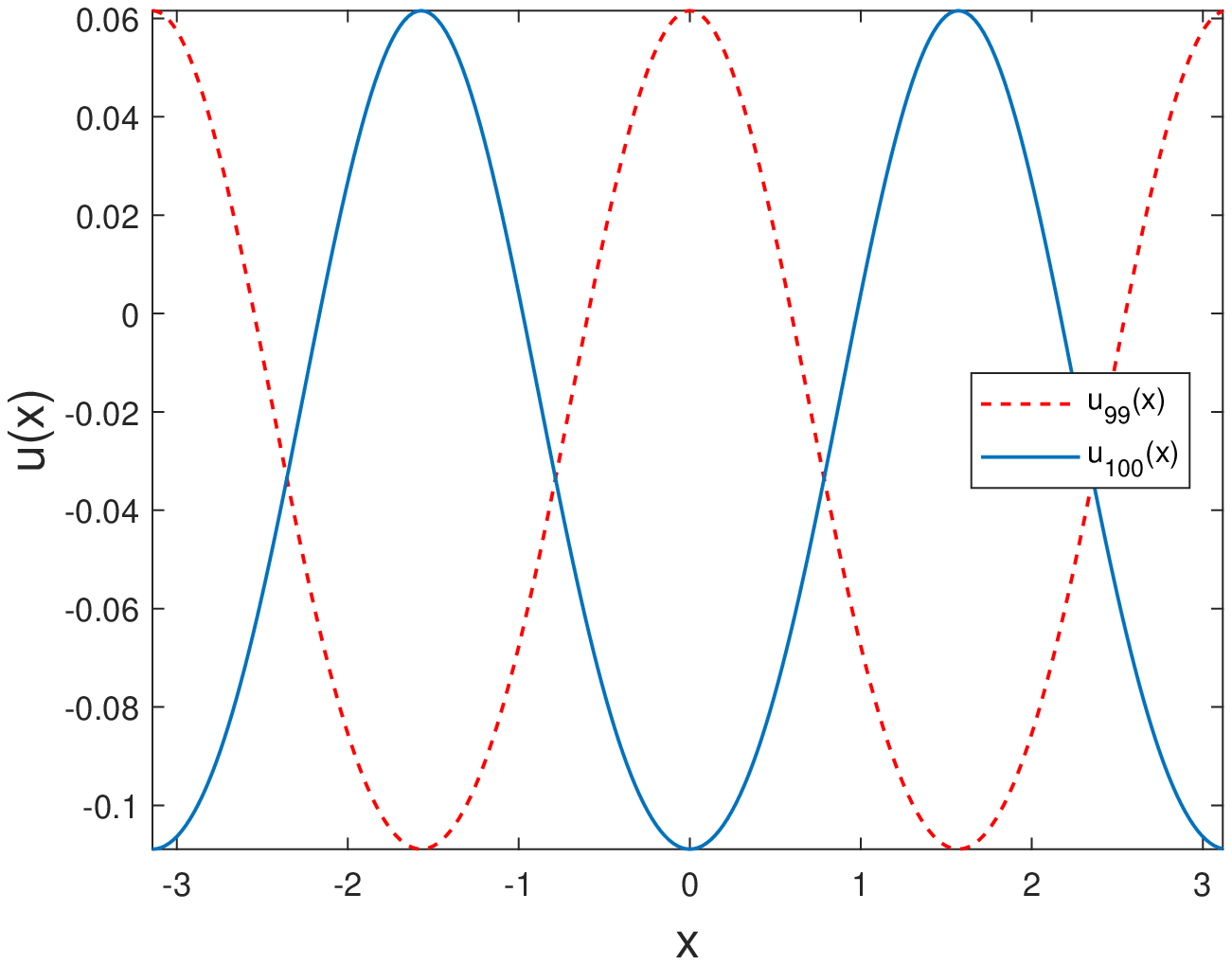}
	\end{subfigure}%
	\begin{subfigure}[t]{0.5\textwidth}
		\centering
		\includegraphics[width=0.7\linewidth]{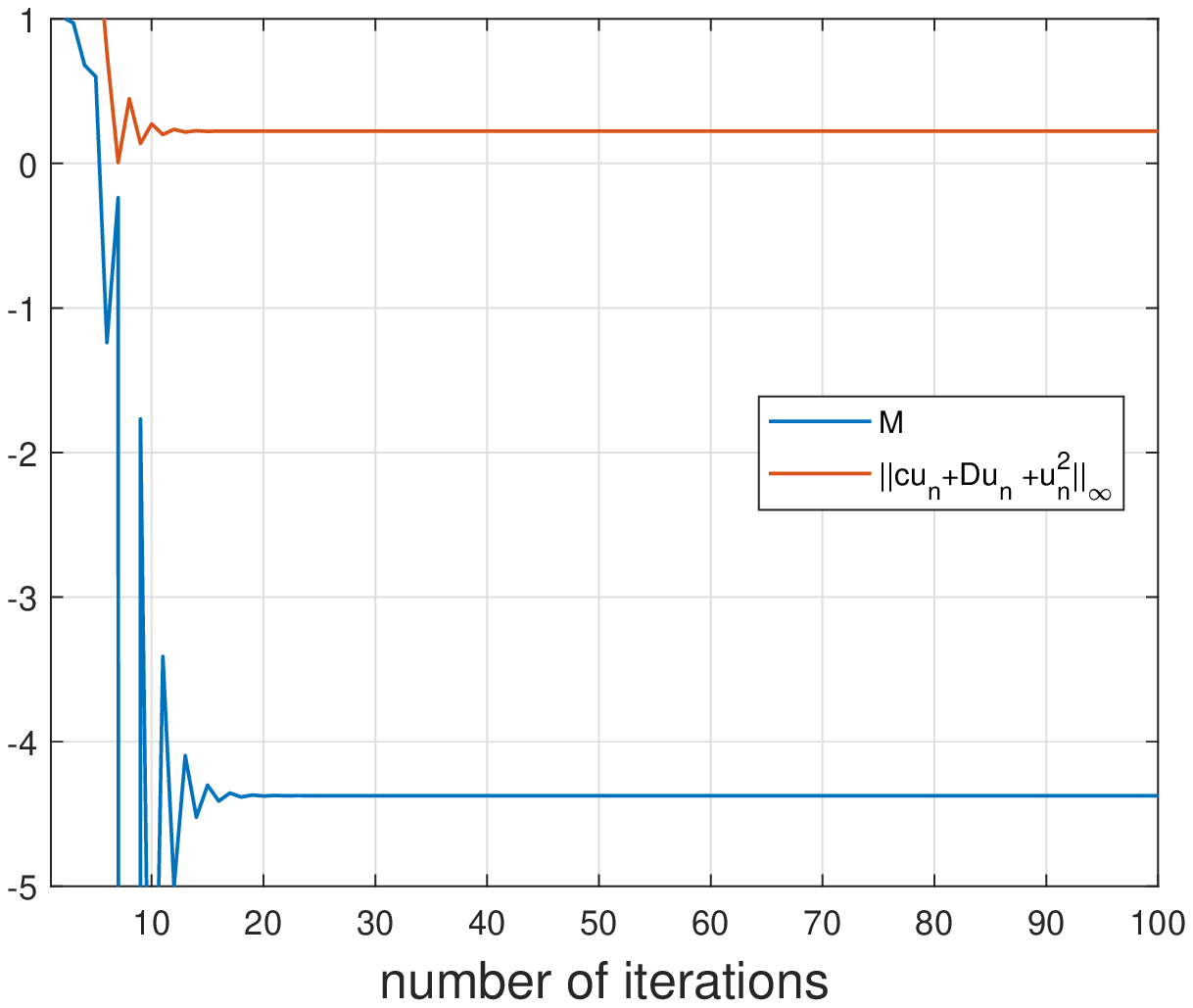}
	\end{subfigure}
\caption{Iterations for $c = 3$ and $\alpha = 2$. (a) The last two iterations versus $x$.
(b) Computational errors versus $n$. }	\label{fig:Kdv-c3}
\end{figure}

Figure \ref{fig:Kdv-c23} illustrates the case $c = 2.3$. Since
the largest eigenvalue of $\mathcal{L}^{-1}_{c,\alpha}\mathcal{H}_{c,\alpha}$ crosses the level $2$ at this value of $c$,
see Figure \ref{fig:Eigenvalues_against_c1_37}, this case is marginal for convergence of iterations. As we can see from Figure \ref{fig:Kdv-c23},
iterations still converge to a single-lobe periodic wave but the convergence is slow.

Figure \ref{fig:Kdv-c3} illustrates the case $c = 3$. The iterative method (\ref{T}) diverges from the single-lobe periodic wave.
The instability is related to the eigenvalue of $\mathcal{L}_T$ which is smaller than $-1$, hence
the period-doubling instability leads to an alternating sequence which oscillates between two double-lobe profile
shown on the left panel. The right panel shows that the factor $M$ no longer converges to 1 but to -4.3737 and the residual
errors does not converge to 0 but remains strictly positive with the number of iterations.
Therefore, the two limiting states of the iterative method (\ref{T}) in the $2$-periodic orbit are not
a periodic wave of the boundary-value problem (\ref{ode}).

For the BO equation with $\alpha = 1$, we show that the method diverges for
$c \gtrsim 1$ in agreement with Corollary \ref{corollary-1}.
Figure \ref{fig:Eigenvalues_against_c1_2_BO} shows the eigenvalues
of the generalized eigenvalue problem (\ref{gEp}) for $\alpha=1$.
The eigenvalue $\lambda_1 = \frac{2^{\alpha+1}-5}{2^{\alpha+1}-3}$ in Lemma \ref{lemma-3}
yields $\lambda_1 = -1$ for $\alpha = 1$ in addition to the other eigenvalue $-1$
in $\{-1,0,\lambda_1,\lambda_2\}$. Hence, $\lambda=-1$ is a double eigenvalue
and the left panel shows that this double eigenvalue
is preserved in $c$.
The right panel zooms in eigenvalues near $c = 1$ and shows the asymptotic approximation
of the eigenvalue near $2$ given by (\ref{Lambda1}) and (\ref{Lambda2}) with $\alpha = 1$.

\begin{figure}[h]
\centering
\includegraphics[width=0.45\linewidth]{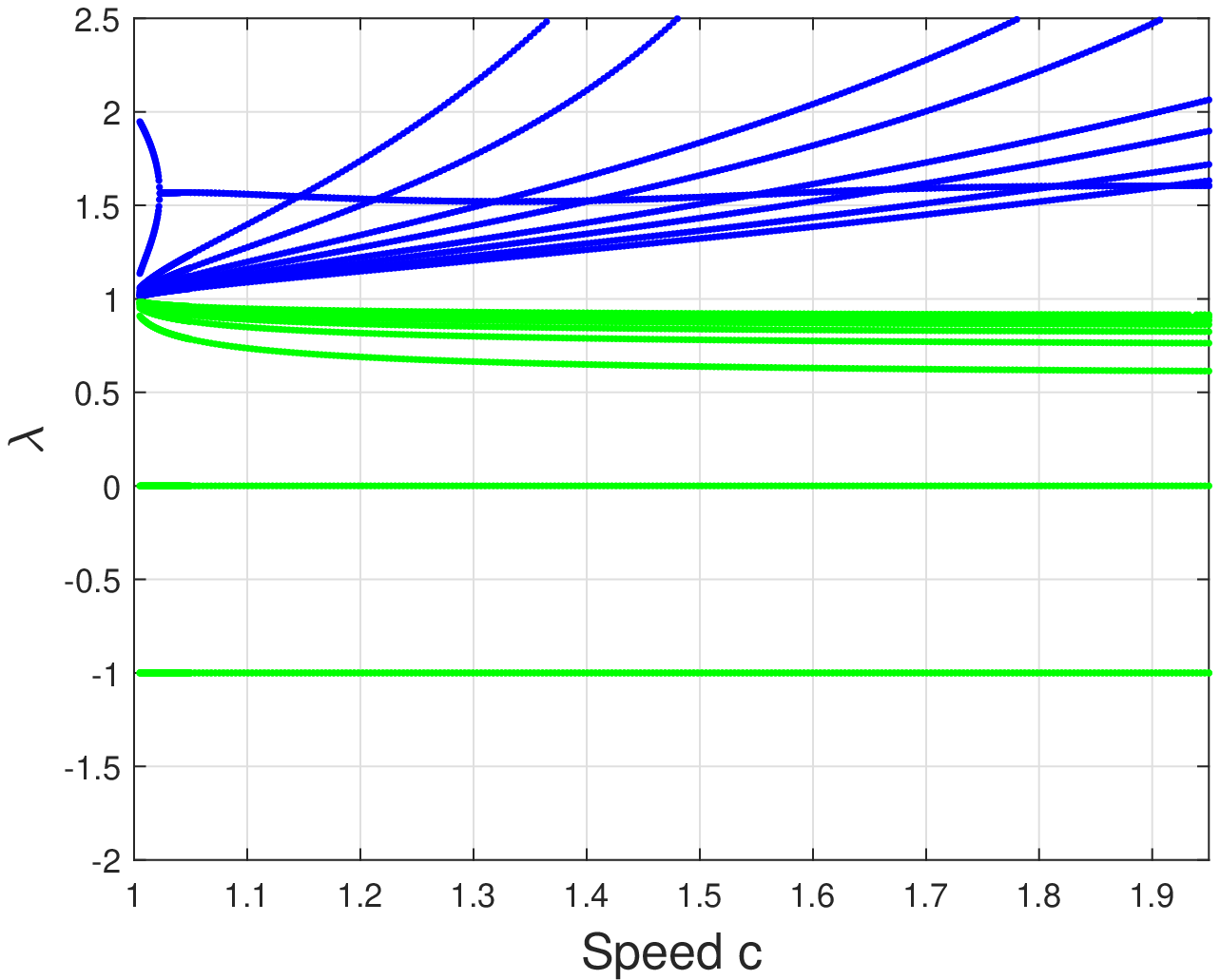}
\includegraphics[width=0.45\linewidth]{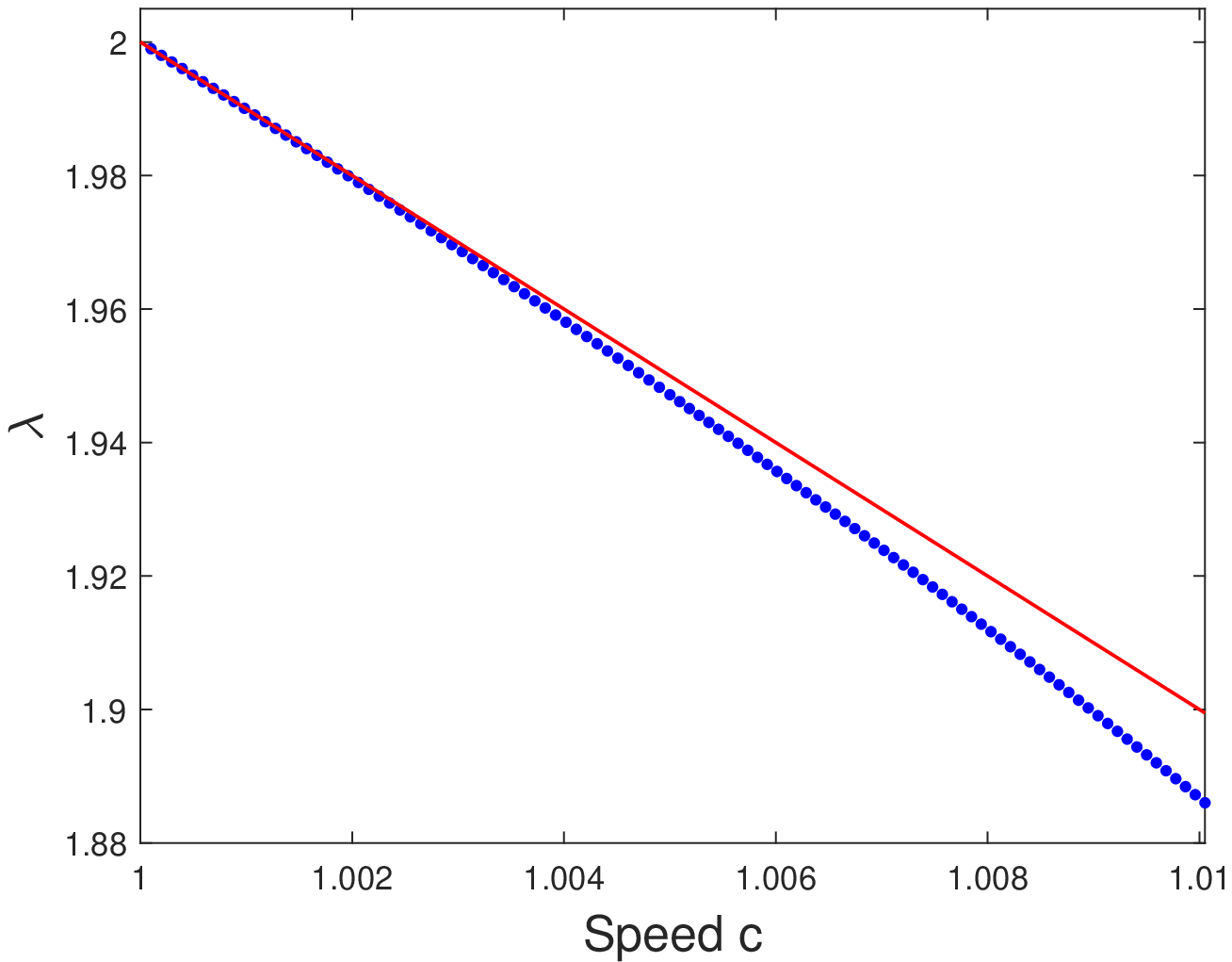}
\caption{Left: Eigenvalues of the operator $\mathcal{L}_{c,\alpha}^{-1}\mathcal{H}_{c,\alpha}$ for $\alpha = 1$. Right: Zoom in with the asymptotic dependence given by (\ref{Lambda1}) and (\ref{Lambda2}).}
\label{fig:Eigenvalues_against_c1_2_BO}
\end{figure}

\begin{figure}[h]
	\centering
	\begin{subfigure}[t]{0.5\textwidth}
		\centering
		\includegraphics[width=0.65\linewidth]{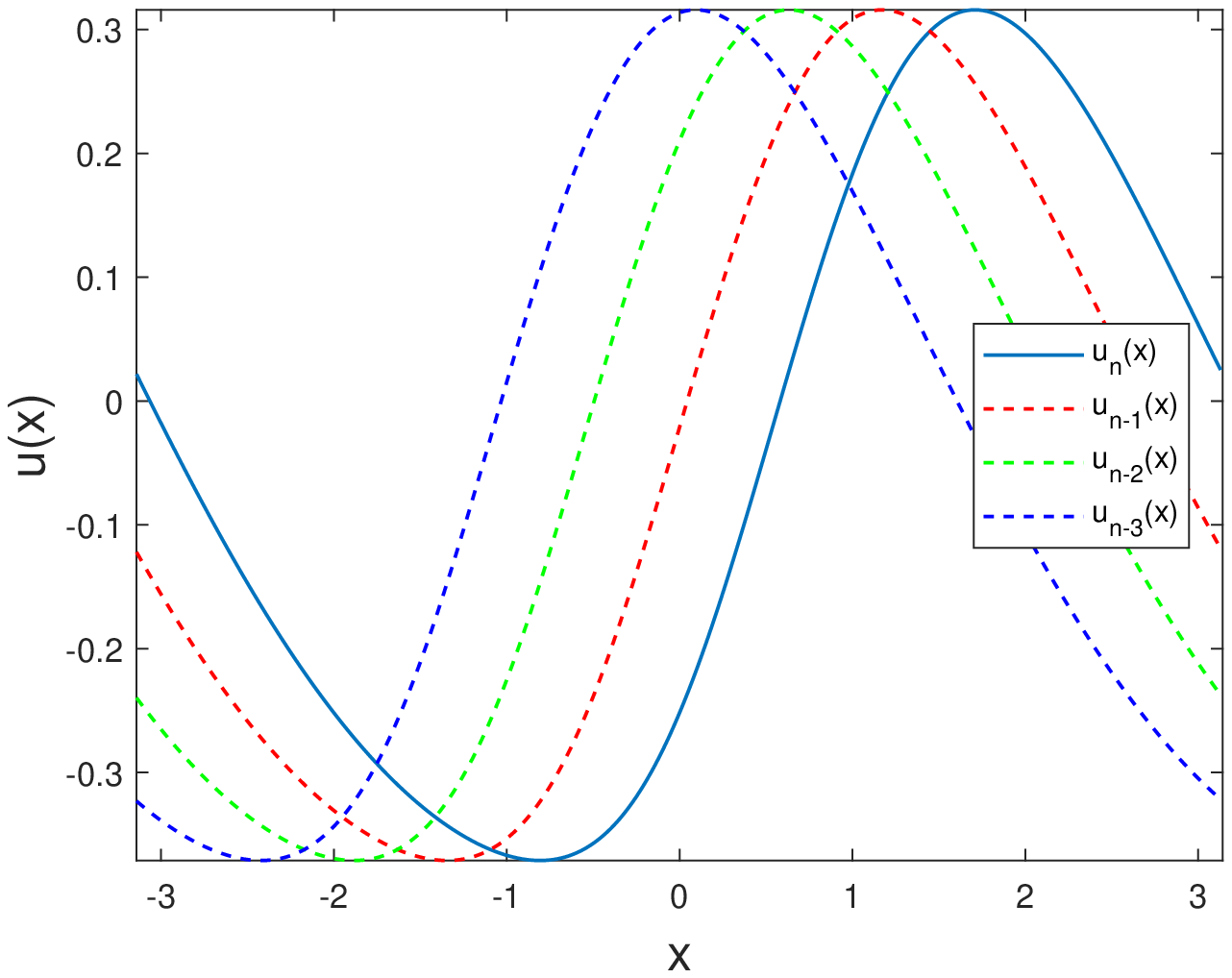}
	\end{subfigure}%
	\begin{subfigure}[t]{0.5\textwidth}
		\centering
		\includegraphics[width=0.65\linewidth]{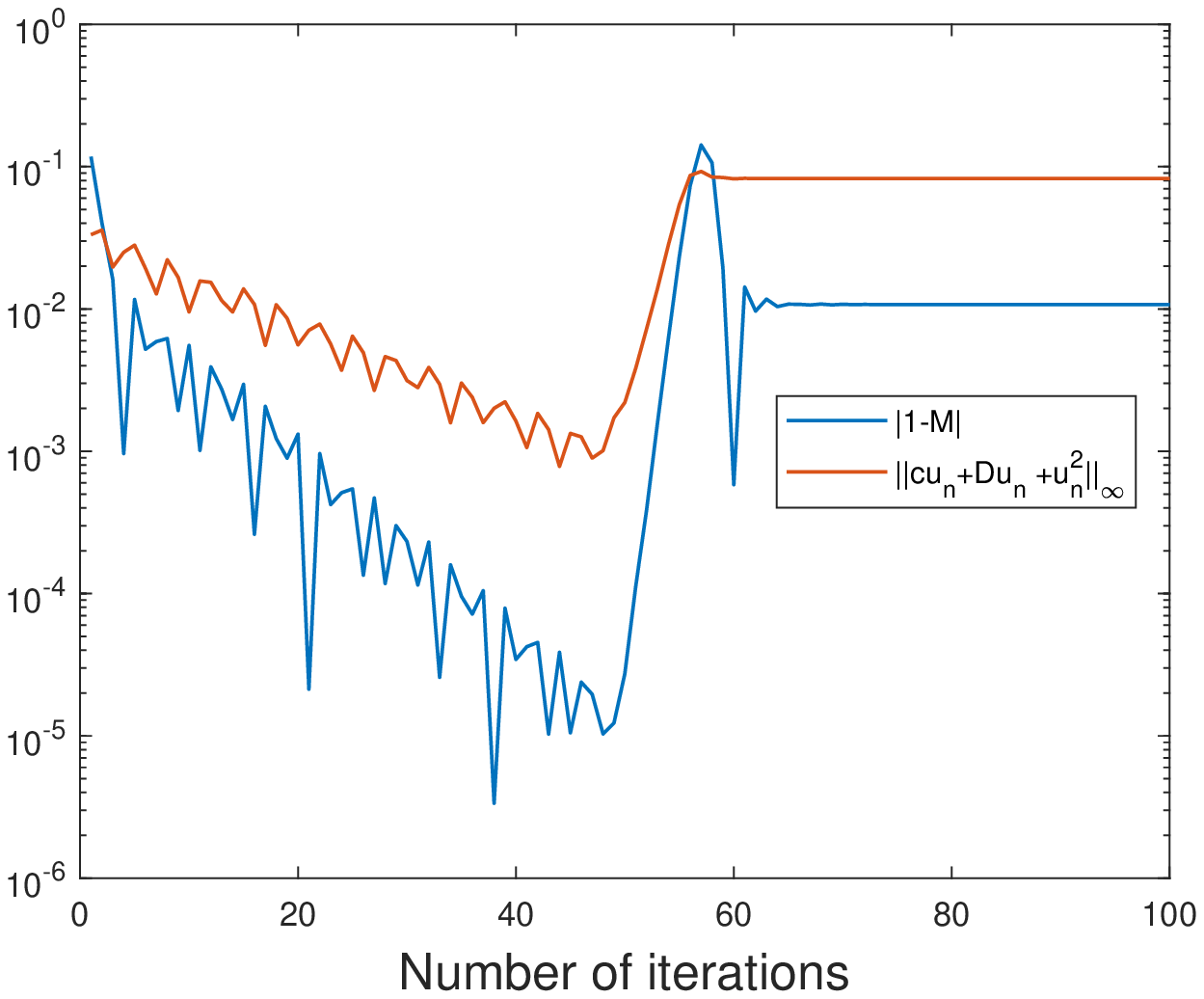}
	\end{subfigure}
\caption{Iterations for $c = 1.1$ and $\alpha = 1$. (a) The last four iterations versus $x$.
(b) Computational errors versus $n$. }	
\label{fig:BO-c11-2ndorder}
\end{figure}
\begin{figure}[h]
	\centering
	\begin{subfigure}[t]{0.5\textwidth}
		\centering
		\includegraphics[width=0.65\linewidth]{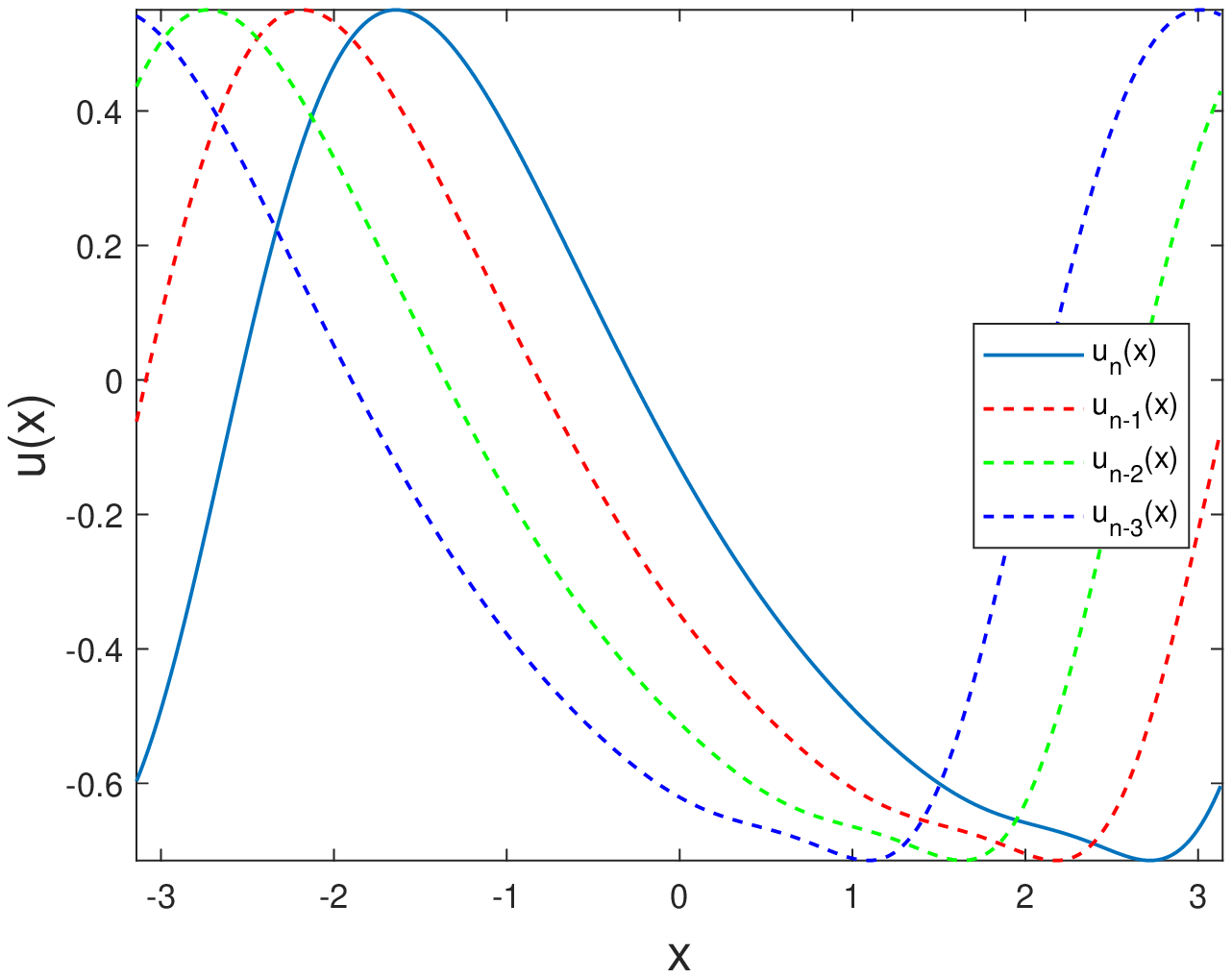}
	\end{subfigure}%
	\begin{subfigure}[t]{0.5\textwidth}
		\centering
		\includegraphics[width=0.65\linewidth]{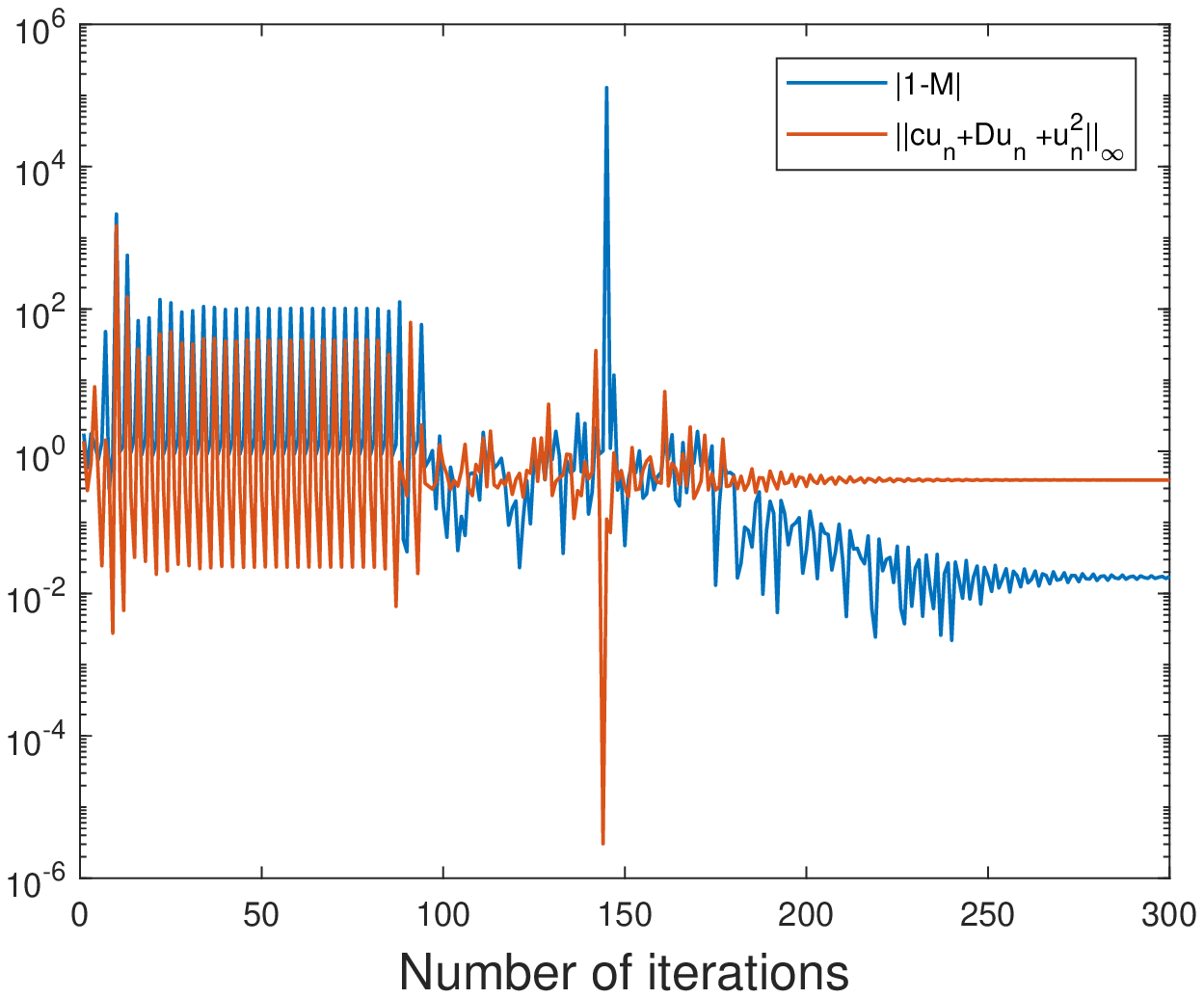}
	\end{subfigure}
\caption{Iterations for $c = 1.3$ and $\alpha = 1$. (a) The last four iterations versus $x$.
(b) Computational errors versus $n$. }	\label{fig:BO-c13}
\end{figure}
\begin{figure}[h]
	\centering
	\begin{subfigure}[t]{0.5\textwidth}
		\centering
		\includegraphics[width=0.65\linewidth]{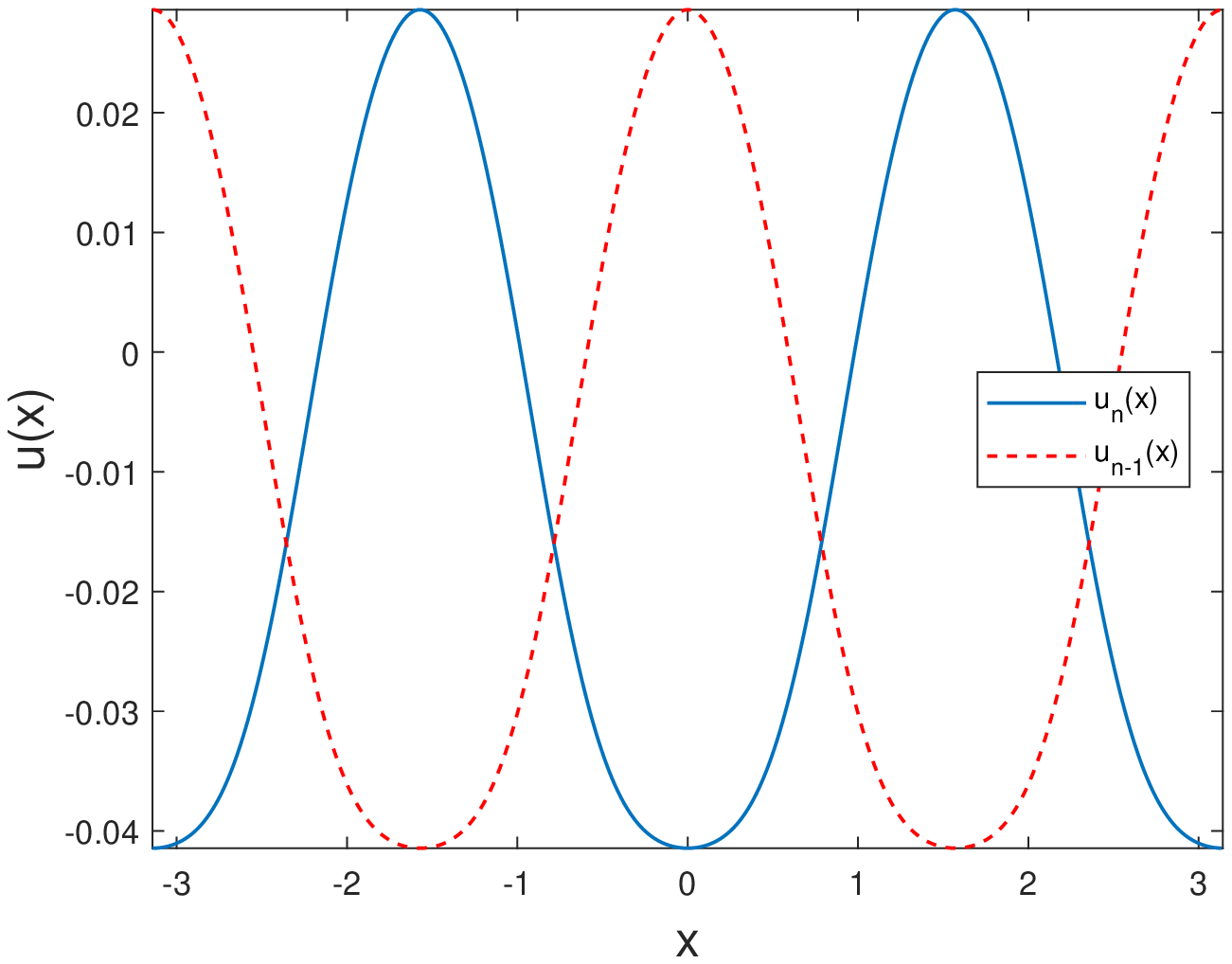}
	\end{subfigure}%
	\begin{subfigure}[t]{0.5\textwidth}
		\centering
		\includegraphics[width=0.65\linewidth]{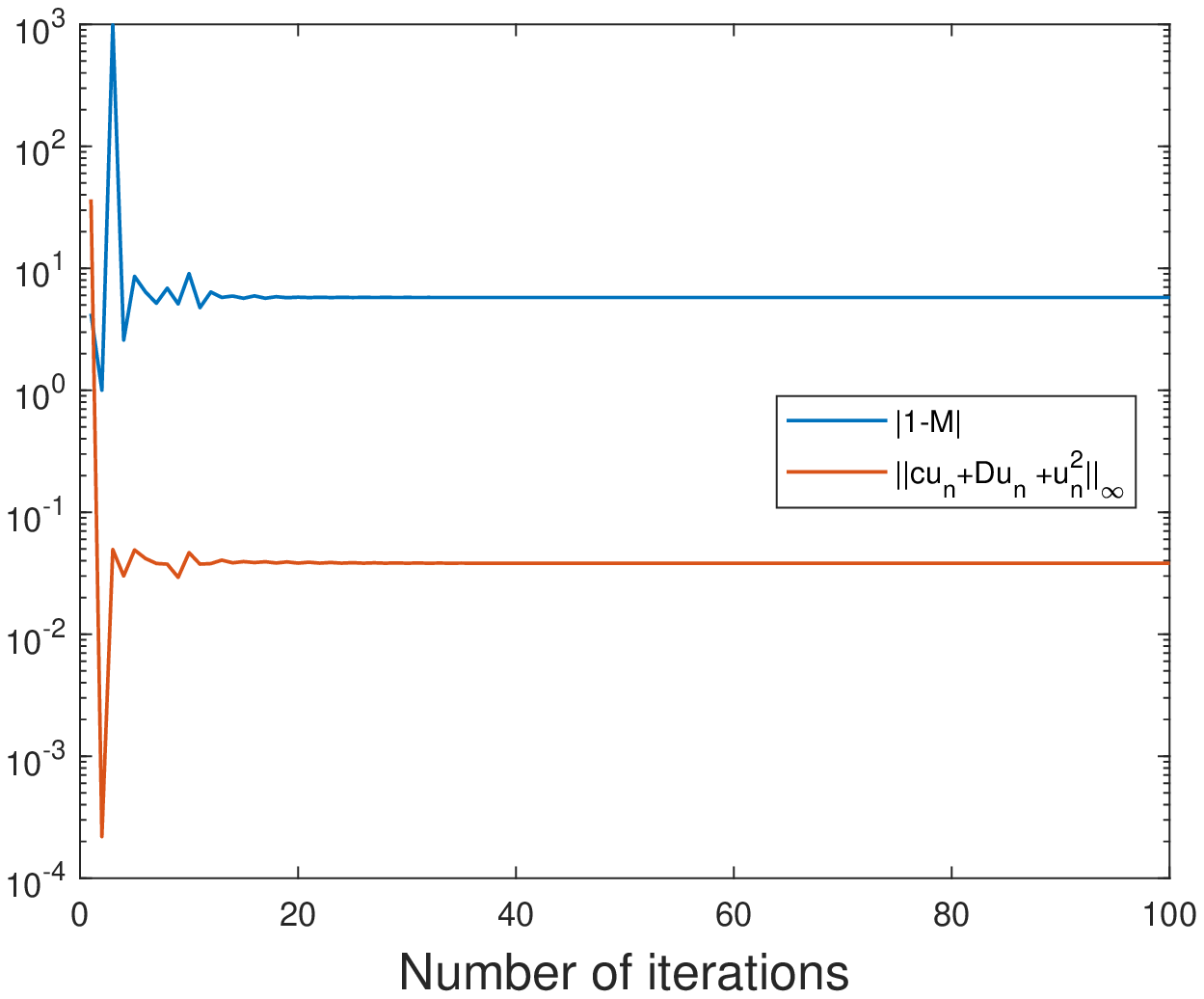}
	\end{subfigure}
\caption{Iterations for $c = 1.6$ and $\alpha = 1$. (a) The last two iterations versus $x$.
(b) Computational errors versus $n$. }			\label{fig:BO-c153}
\end{figure}

To illustrate convergence of the iterative method (\ref{T}) for $\alpha =1$, we
use the initial function
\begin{equation}
\label{initial-guess-BO}
u_0(x)= a \cos(x)+ \frac{1}{2} a^2 \left(\cos(2x)-1\right) + \varepsilon \sin(x),
\end{equation}
where $a > 0$ and $\varepsilon \in \mathbb{R}$.
Again, we have verified that $\int_{-\pi}^{\pi} \phi^3 dx < 0$ for every
$c > 1$ and $\alpha = 1$ by the explicit computations in (\ref{wave-negative-bo}),
therefore, we included the second term of the Stokes expansion (\ref{wave-assumption}) in
the initial function (\ref{initial-guess-BO}). In computations below, we take $a = 0.4$.

As predicted by Corollary \ref{corollary-1} for $\alpha = 1$,
the iterative method (\ref{T}) diverges for the BO equation and
this divergence for $c \gtrsim 1$ is due to an odd eigenfunction
of the generalized eigenvalue problem (\ref{gEp}) for the eigenvalue $\lambda_1 = -1$.

Figure \ref{fig:BO-c11-2ndorder} illustrates the case $c=1.1$ showing
the last four iterations in the left panel and the factor $M$ converging to $1.0107$
and the residual error converges to $0.0826$ in the right panel.
In this computation, we take $\varepsilon = 0$.
Although the residual error starts to decrease initially due to contracting
properties of $\mathcal{L}_T$ on the even subspace of $L^2_{\rm per}(-\pi,\pi)$,
round-off errors induce odd perturbations which result in slow instability.
As a result, the periodic wave of amplitude $0.458$ is not captured by
the iterative method (\ref{T}), instead iterations converge to the periodic profile of amplitude $0.344$
which is drifted by every iteration to the right. This drifted periodic profile
of the iterative method (\ref{T}) is not a solution
to the boundary-value problem (\ref{ode}). If $\varepsilon \neq 0$,
the instability develops much faster and the drifted periodic profile is drifted to the right
if $\varepsilon > 0$ and to the left if $\varepsilon < 0$.

Figure \ref{fig:BO-c13} shows the marginal case $c=1.3$ where another unstable eigenvalue of $\mathcal{L}_T$
related to the even eigenfunction crosses the level $-1$. Although the instability pattern of Figure \ref{fig:BO-c11-2ndorder}
is repeated on Figure \ref{fig:BO-c13}, the periodic profile becomes more complicated and
the instability process is accompanied by many intermediate oscillations.
Here again we set $\varepsilon = 0$, if $\varepsilon \neq 0$, the drifted periodic profile
is formed much faster and intermediate oscillations are reduced.

Figure \ref{fig:BO-c153} illustrates the case $c=1.6$ when several eigenvalues
of $\mathcal{L}_T$ are located below $-1$.
After short intermediate iterations, the iterative method starts to
oscillate between two iterations, similarly to the pattern of Figure \ref{fig:Kdv-c3}.
The right panel of Figure \ref{fig:BO-c153} shows that the factor $M$ converges to $-5.1447$
and the residual error remains strictly positive. The two limiting states of the iterative method (\ref{T})
in the $2$-periodic orbit are not a periodic wave of the boundary-value problem (\ref{ode}).

\section{Proof of Theorem \ref{theorem-main-2}}
\label{sec-4}

By linearizing $\tilde{T}_{c,\alpha}$ at $\psi$ with $w_n = \psi + a_n \psi + b_n \psi' + \beta_n$,
where $\beta_n \in H^{\alpha}_{\rm per}(-\pi,\pi) \cap L^2_c$ satisfies the two constraints in
\begin{equation}
\label{constrained-space-new}
L^2_c := \left\{ \omega \in L^2_{\rm per}(-\pi,\pi) : \quad \langle \psi^2, \omega \rangle = \langle \psi \psi', \omega \rangle = 0 \right\},
\end{equation}
we obtain the linearized iterative rule:
\begin{equation}
\label{lin-T-new}
a_{n+1} = 0, \quad b_{n+1} = b_n, \quad \beta_{n+1} = \tilde{\mathcal{L}}_T \beta_n,
\end{equation}
where
\begin{equation}
\label{operator-L-T-new}
\tilde{\mathcal{L}}_T := \tilde{\mathcal{L}}_{c,\alpha}^{-1} (2 \psi \cdot) =
{\rm Id} - \tilde{\mathcal{L}}_{c,\alpha}^{-1} \tilde{\mathcal{H}}_{c,\alpha}  : \quad
H^{\alpha}_{\rm per}(-\pi,\pi) \cap L^2_c \mapsto H^{\alpha}_{\rm per}(-\pi,\pi) \cap L^2_c
\end{equation}
with $\tilde{\mathcal{L}}_{c,\alpha} = c - D_{\alpha}$ and
$\tilde{\mathcal{H}}_{c,\alpha} = \mathcal{H}_{c,\alpha}$ given by (\ref{Jacobian-new}).
Hence Lemmas \ref{lemma-2} and \ref{lemma-11} apply to $\tilde{\mathcal{H}}_{c,\alpha} = \mathcal{H}_{c,\alpha}$.
In addition, Theorem \ref{lemma-positive} ensures positivity of $\psi(x) > 0$ for every $x \in [-\pi,\pi]$.

The following lemma characterizes the spectrum of $\tilde{\mathcal{L}}_{c,\alpha}^{-1} \tilde{\mathcal{H}}_{c,\alpha}$
in $L^2_{\rm per}(-\pi,\pi)$ for every $c > 1$ and $\alpha \in (\alpha_0,2]$. Convergence of
the iterative method (\ref{T-new}) to the positive periodic wave $\psi \in H^{\alpha}_{\rm per}(-\pi,\pi)$
of the boundary-value problem (\ref{ode-new}) follows from this lemma.
This construction yields the proof of Theorem \ref{theorem-main-2}.

\begin{lemma}
\label{lemma-modified}
For every $c > 1$ and $\alpha \in (\alpha_0,2]$, $\sigma(\tilde{\mathcal{L}}_{c,\alpha}^{-1} \tilde{\mathcal{H}}_{c,\alpha}) \in (0,1)$
in $L^2_c$.
\end{lemma}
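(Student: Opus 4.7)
The plan is to exploit two structural facts: positivity of $\tilde{\mathcal{L}}_{c,\alpha}=c-D_{\alpha}$ and positivity of $\psi$ from Theorem \ref{lemma-positive}. Since $\sigma(\tilde{\mathcal{L}}_{c,\alpha})=\{c+|n|^{\alpha}\}\subset(c,\infty)$ for every $c>0$, this operator is self-adjoint and positive definite with compact (Hilbert--Schmidt, as $\alpha>\alpha_0>1/2$) inverse. Consequently $\tilde{\mathcal{L}}_{c,\alpha}^{-1}\tilde{\mathcal{H}}_{c,\alpha}={\rm Id}-2\tilde{\mathcal{L}}_{c,\alpha}^{-1}\psi$ is similar to the self-adjoint operator $\tilde{\mathcal{L}}_{c,\alpha}^{-1/2}\tilde{\mathcal{H}}_{c,\alpha}\tilde{\mathcal{L}}_{c,\alpha}^{-1/2}$, so its spectrum is real, purely discrete, and can accumulate only at $1$.

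The first substantive step is to count non-positive eigenvalues on the full space $L^2_{\rm per}(-\pi,\pi)$. By Sylvester's law of inertia, $\tilde{\mathcal{L}}_{c,\alpha}^{-1}\tilde{\mathcal{H}}_{c,\alpha}$ shares the inertia of $\tilde{\mathcal{H}}_{c,\alpha}$, which by Lemma \ref{lemma-11} has exactly one simple negative eigenvalue and one simple zero eigenvalue. Both exceptional modes can be identified explicitly from (\ref{ode-new}): combining $\tilde{\mathcal{L}}_{c,\alpha}\psi=\psi^{2}$ with $\tilde{\mathcal{H}}_{c,\alpha}=\tilde{\mathcal{L}}_{c,\alpha}-2\psi$ yields $\tilde{\mathcal{H}}_{c,\alpha}\psi=-\tilde{\mathcal{L}}_{c,\alpha}\psi$, so $\lambda=-1$ is the unique negative eigenvalue with eigenfunction $\psi$; differentiating (\ref{ode-new}) gives $\tilde{\mathcal{H}}_{c,\alpha}\psi'=0$, so $\lambda=0$ with eigenfunction $\psi'$.

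The second step is to show that the two constraints defining $L^2_c$ eject precisely these two modes. Invariance of $L^2_c$ under $\tilde{\mathcal{L}}_{c,\alpha}^{-1}\tilde{\mathcal{H}}_{c,\alpha}$ follows from the self-adjointness of both operators together with the elementary identities $\tilde{\mathcal{L}}_{c,\alpha}^{-1}(\psi^{2})=\psi$ and $\tilde{\mathcal{L}}_{c,\alpha}^{-1}(\psi\psi')=\tfrac{1}{2}\psi'$ (the second from differentiating the first) and the eigenrelations above. Positivity of $\psi$ from Theorem \ref{lemma-positive} then gives $\langle\psi^{2},\psi\rangle=\int_{-\pi}^{\pi}\psi^{3}\,dx>0$, so $\psi\notin L^2_c$, and $\langle\psi\psi',\psi'\rangle=\int_{-\pi}^{\pi}\psi(\psi')^{2}\,dx>0$ (since the single-lobe profile $\psi$ is non-constant), so $\psi'\notin L^2_c$. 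Hence the restricted spectrum $\sigma(\tilde{\mathcal{L}}_{c,\alpha}^{-1}\tilde{\mathcal{H}}_{c,\alpha}|_{L^2_c})$ contains only strictly positive eigenvalues.

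For the strict upper bound, pairing the generalized eigenvalue equation $\tilde{\mathcal{H}}_{c,\alpha}v=\lambda\tilde{\mathcal{L}}_{c,\alpha}v$ with $v$ and using $\tilde{\mathcal{H}}_{c,\alpha}=\tilde{\mathcal{L}}_{c,\alpha}-2\psi$ produces
\[
(1-\lambda)\,\langle\tilde{\mathcal{L}}_{c,\alpha}v,v\rangle \;=\; 2\int_{-\pi}^{\pi}\psi\,v^{2}\,dx,
\]
and positivity of $\tilde{\mathcal{L}}_{c,\alpha}$ and of $\psi$ forces both sides to be strictly positive for any nonzero eigenfunction $v$, so $\lambda<1$. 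Combined with the previous paragraph, this gives $\sigma(\tilde{\mathcal{L}}_{c,\alpha}^{-1}\tilde{\mathcal{H}}_{c,\alpha}|_{L^2_c})\subset(0,1)$. The conceptual core, and the feature absent from the $\phi$-problem of Section \ref{sec-3}, is the strict positivity of $\psi$: it simultaneously drives the energy identity in the display and places both non-positive eigenfunctions outside the constrained subspace $L^2_c$. The only mildly delicate point is the invariance verification, without which ``restriction of the spectrum'' would be ill-posed; but the two algebraic identities $\tilde{\mathcal{L}}_{c,\alpha}^{-1}(\psi^2)=\psi$ and $\tilde{\mathcal{L}}_{c,\alpha}^{-1}(\psi\psi')=\tfrac{1}{2}\psi'$ make this immediate.
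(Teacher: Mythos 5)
Your proof is correct and follows essentially the same route as the paper: positivity of $\tilde{\mathcal{L}}_{c,\alpha}$, the eigenvalue count for $\tilde{\mathcal{H}}_{c,\alpha}$ from Lemma \ref{lemma-11}, the exact modes $\psi$ and $\psi'$ at $\lambda=-1$ and $\lambda=0$ removed by the two constraints, and positivity of $\psi$ for the bound $\lambda<1$. The only cosmetic difference is that you argue the inertia transfer directly via congruence with $\tilde{\mathcal{L}}_{c,\alpha}^{-1/2}\tilde{\mathcal{H}}_{c,\alpha}\tilde{\mathcal{L}}_{c,\alpha}^{-1/2}$ (valid here since $\tilde{\mathcal{L}}_{c,\alpha}$ is positive definite), where the paper cites Theorem 1 of \cite{ChugPel}, and you make the upper bound explicit as a quadratic-form identity rather than as strict positivity of $\tilde{\mathcal{L}}_{\tilde{T}}$.
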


\begin{proof}
We note that $\tilde{\mathcal{L}}_{c,\alpha}$ is positive for every $c > 1$ and $\alpha > 0$,
whereas $\tilde{\mathcal{H}}_{c,\alpha}$ has one simple negative eigenvalue and a simple zero eigenvalue 
for every $c > 1$ and $\alpha \in (\alpha_0,2]$ by Lemma \ref{lemma-11}. 

By Theorem 1 in \cite{ChugPel}, $\sigma(\tilde{\mathcal{L}}_{c,\alpha}^{-1} \tilde{\mathcal{H}}_{c,\alpha})$
in $L^2_{\rm per}(-\pi,\pi)$ is real and contains one simple negative eigenvalue and a simple zero eigenvalue, the rest
of the spectrum is positive and bounded away from zero.
The negative and zero eigenvalues correspond to the exact solutions:
\begin{equation}
\label{eigenvalues-exact-new}
\tilde{\mathcal{L}}_{c,\alpha}^{-1} \tilde{\mathcal{H}}_{c,\alpha} \psi = - \psi \quad \mbox{\rm and} \quad
\tilde{\mathcal{L}}_{c,\alpha}^{-1} \tilde{\mathcal{H}}_{c,\alpha} \psi' = 0.
\end{equation}
These eigenvalues are removed by adding two constraints in the definition of $L^2_c$ in (\ref{constrained-space-new}).
The positive eigenvalues are bounded from above by $1$ because the operator
$$
\tilde{\mathcal{L}}_{\tilde{T}} = \tilde{\mathcal{L}}_{c,\alpha}^{-1} (2 \psi \cdot) = Id - \tilde{\mathcal{L}}_{c,\alpha}^{-1} \tilde{\mathcal{H}}_{c,\alpha}
$$
is strictly positive due to positivity of $\tilde{\mathcal{L}}_{c,\alpha}$ and $\psi$.
Hence, $\sigma(\tilde{\mathcal{L}}_{c,\alpha}^{-1} \tilde{\mathcal{H}}_{c,\alpha}) \in (0,1)$ in $L^2_c$.
\end{proof}

\begin{corollary}
\label{corollary-modified}
For every $c > 1$ and $\alpha \in (\alpha_0,2]$,
the iterative method (\ref{T-new}) converges to $\psi$ in $H^{\alpha}_{\rm per}(-\pi,\pi)$.
\end{corollary}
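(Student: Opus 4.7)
The plan is to adapt the proof of Theorem \ref{theorem-convergence-classic} to the modified iteration $\tilde{T}_{c,\alpha}$, using Lemma \ref{lemma-modified} as the replacement for the spectral condition on $\mathcal{L}_T$. The key observation is that Lemma \ref{lemma-modified} already gives us the crucial contraction property: the spectrum of $\tilde{\mathcal{L}}_T = \mathrm{Id} - \tilde{\mathcal{L}}_{c,\alpha}^{-1} \tilde{\mathcal{H}}_{c,\alpha}$ lies strictly inside $(0,1)$ on the constrained space $L^2_c$ defined by (\ref{constrained-space-new}), so $\tilde{\mathcal{L}}_T$ is a strict contraction.

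First, I would verify that the analogue of the non-degeneracy assumptions (\ref{assumption-on-phi}) holds for $\psi$, uniformly for every $c>1$ and $\alpha \in (\alpha_0, 2]$. By Theorem \ref{lemma-positive}, $\psi(x) > 0$ on $[-\pi,\pi]$, which immediately yields $\int_{-\pi}^{\pi} \psi^3\,dx > 0$ and $\int_{-\pi}^{\pi} \psi (\psi')^2\,dx > 0$ (the latter because $\psi$ is single-lobe, so $\psi'$ is not identically zero and $\psi (\psi')^2 \geq 0$). The strict positivity of the first integral is what justifies that the Petviashvili quotient $\tilde{M}$ has a non-vanishing denominator in a neighborhood of $\psi$, so $\tilde{T}_{c,\alpha}$ is well defined there; the non-vanishing of the second integral is what will enable the implicit-function step in the modulation argument.

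Next, for an even initial guess $w_0 \in H^{\alpha}_{\rm per}(-\pi,\pi)$ close to $\psi$, I would linearize $w_n = \psi + \omega_n$ with $\omega_n = a_n \psi + \beta_n$, $\beta_n \in L^2_c$, and appeal directly to the iterative formula (\ref{lin-T-new}): the linearization forces $a_{n+1} = 0$ and $\beta_{n+1} = \tilde{\mathcal{L}}_T \beta_n$. Since $\tilde{\mathcal{L}}_T$ is a strict contraction on $L^2_c$ by Lemma \ref{lemma-modified}, Banach's fixed-point theorem yields convergence to $\psi$ in $H^{\alpha}_{\rm per}(-\pi,\pi)$. For a general (not necessarily even) initial guess, I would introduce the modulated decomposition
\begin{equation*}
w_n(x) = \psi(x - b_n) + \omega_n(x - b_n), \qquad \langle \psi \psi', \omega_n \rangle = 0,
\end{equation*}
exactly as in the proof of Theorem \ref{theorem-convergence-classic}, using the implicit function theorem to uniquely determine $b_n$ near the argument of the infimum $\inf_b \| w_n - \psi(\cdot-b)\|_{H^\alpha_{\rm per}}$ (here the non-vanishing of $\int \psi(\psi')^2\,dx$ plays the role of the non-degeneracy of $\partial_{\Delta b} F$ at zero).

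The main technical obstacle, as in Theorem \ref{theorem-convergence-classic}, is controlling the modulation parameter $b_n$ so that the constraint $\langle \psi \psi', \omega_n \rangle = 0$ is propagated by the nonlinear iteration. One sets up a root-finding problem $F(\Delta b_n, \omega_n) = 0$ for $\Delta b_n = b_{n+1} - b_n$, uses the identity $\langle \psi \psi', \tilde{T}'(\psi) \omega_n \rangle = \langle \psi \psi', \omega_n \rangle = 0$ together with the quadratic nonlinear remainder estimate $\|N(\omega_n)\|_{H^\alpha_{\rm per}} \leq C \|\omega_n\|^2_{H^\alpha_{\rm per}}$ to obtain $|\Delta b_n| \leq C\|\omega_n\|^2_{H^\alpha_{\rm per}}$, and then closes the argument by a Banach contraction on the product space for $(a_n, \beta_n)$, whose linearization coincides with (\ref{lin-T-new}). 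The exponential convergence of $\omega_n$ then forces $\{b_n\}$ to be Cauchy, with limit $b_*$ satisfying $|b_*| \lesssim \|w_0 - \psi\|_{H^\alpha_{\rm per}}$. The conclusion is that $w_n \to \psi(\cdot - b_*)$ in $H^\alpha_{\rm per}(-\pi,\pi)$, which gives the asymptotic stability (up to a translation) asserted in Theorem \ref{theorem-main-2}. The entire argument is essentially a cleaner version of Theorem \ref{theorem-convergence-classic}, because here Lemma \ref{lemma-modified} gives unconditional contraction of $\tilde{\mathcal{L}}_T$ on $L^2_c$ for all $c > 1$ and $\alpha \in (\alpha_0, 2]$, removing the parameter-regime restrictions needed in the classical method.
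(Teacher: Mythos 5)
Your proposal is correct and follows essentially the same route as the paper: the paper's proof likewise verifies $\int_{-\pi}^{\pi}\psi^3\,dx>0$ and $\int_{-\pi}^{\pi}\psi(\psi')^2\,dx>0$ from the positivity of $\psi$ in Theorem \ref{lemma-positive}, invokes Lemma \ref{lemma-modified} to conclude that $\tilde{\mathcal{L}}_{\tilde{T}}$ is a strict contraction on $L^2_c$, and then simply cites Theorem \ref{theorem-convergence-classic} for the convergence. The only difference is that you re-execute the modulation argument explicitly rather than quoting it, which is the same content in expanded form.
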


\begin{proof}
Conditions $\int_{-\pi}^{\pi} \psi^3 dx > 0$ and $\int_{-\pi}^{\pi} \psi (\psi')^2 dx > 0$
follow by positivity of $\psi$ in Theorem \ref{lemma-positive}.
By Lemma \ref{lemma-modified}, the operator $\tilde{\mathcal{L}}_{\tilde{T}}$ is a strict contraction in $L^2_c$ 
for every $c > 1$ and $\alpha \in (\alpha_0,2]$. 
Convergence of the iterative method (\ref{T-new}) follows by Theorem \ref{theorem-convergence-classic}.
\end{proof}

\begin{figure}[h]
	\centering
	\begin{subfigure}[t]{0.5\textwidth}
		\centering
		\includegraphics[width=0.8\linewidth]{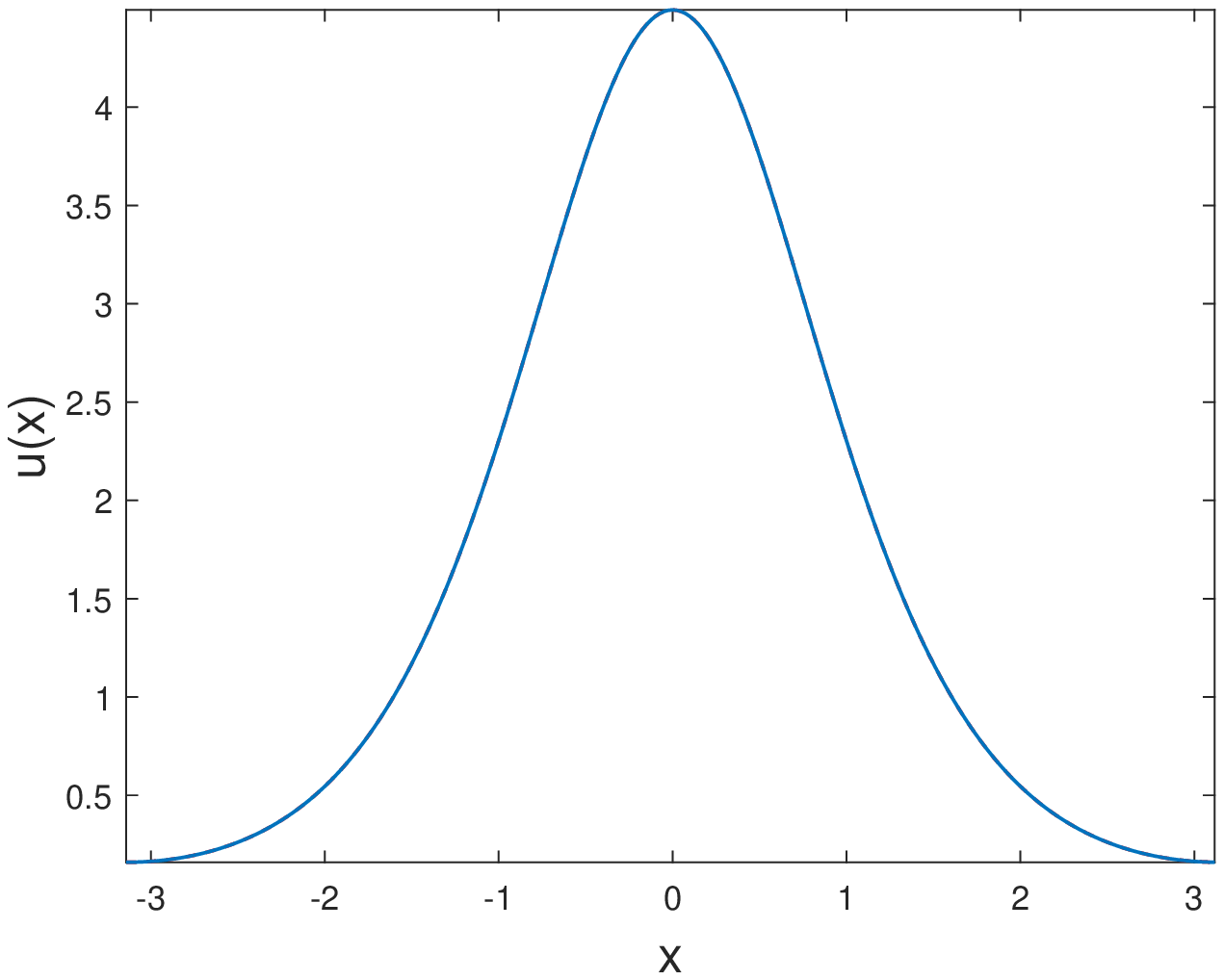}
	\end{subfigure}%
	\begin{subfigure}[t]{0.5\textwidth}
		\centering
		\includegraphics[width=0.8\linewidth]{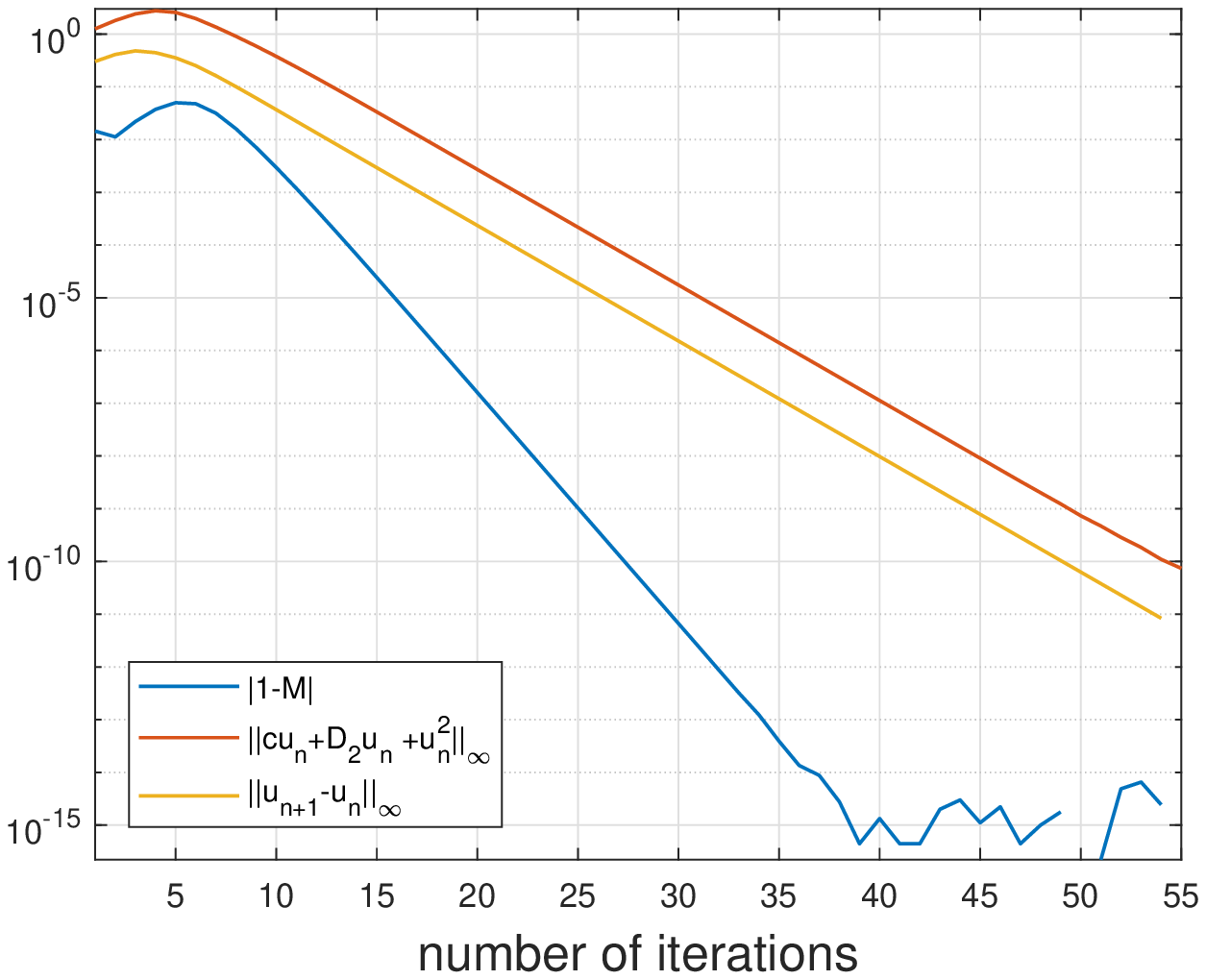}
	\end{subfigure}
		\caption{Iterations for $c = 3$ and $\alpha = 2$. (a) The last iteration versus $x$. (b) Computational errors versus $n$. }
\label{fig:Kdv-shift}
\end{figure}

\begin{figure}[h]
	\centering
	\begin{subfigure}[t]{0.5\textwidth}
		\centering
		\includegraphics[width=0.8\linewidth]{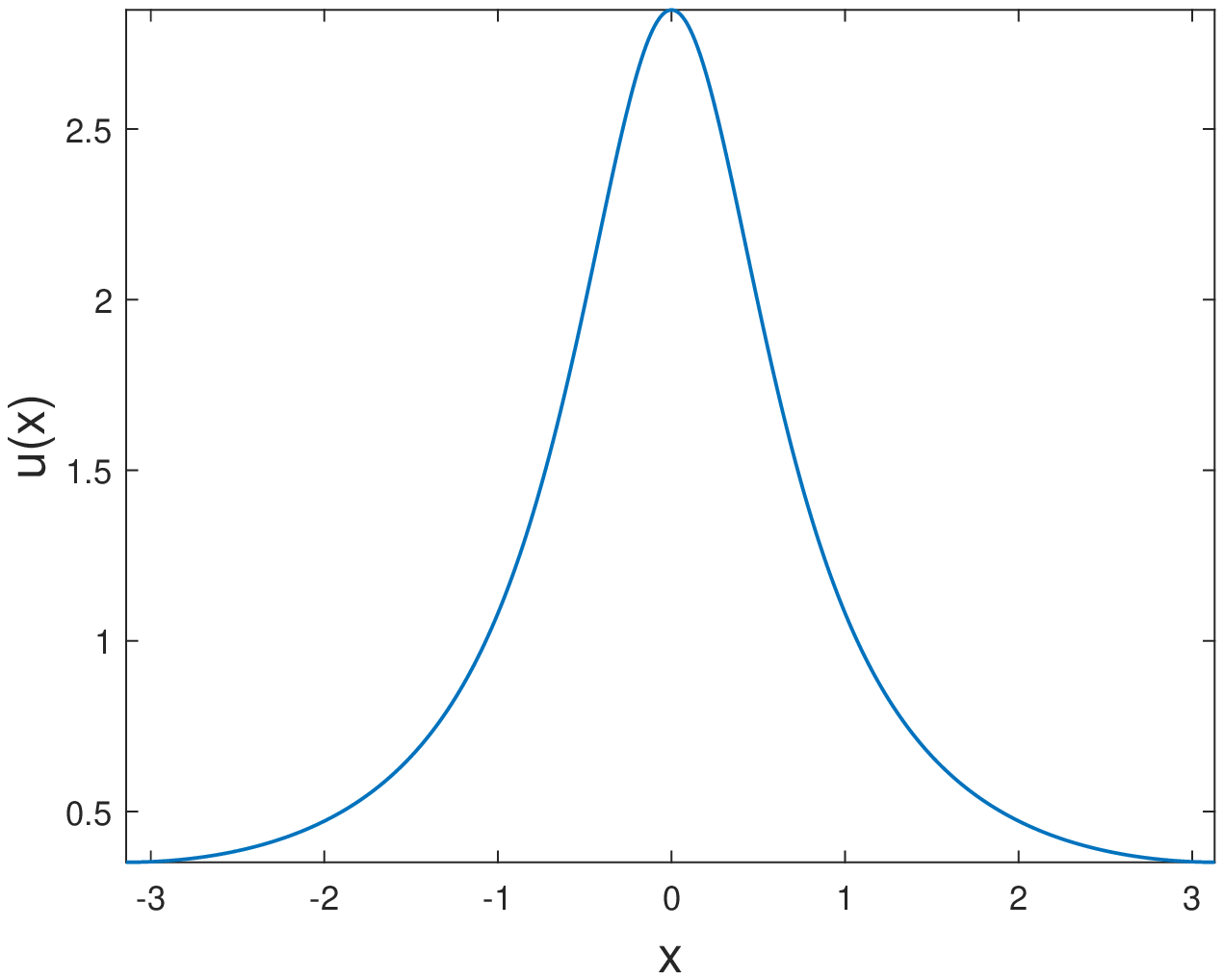}
	\end{subfigure}%
	\begin{subfigure}[t]{0.5\textwidth}
		\centering
		\includegraphics[width=0.8\linewidth]{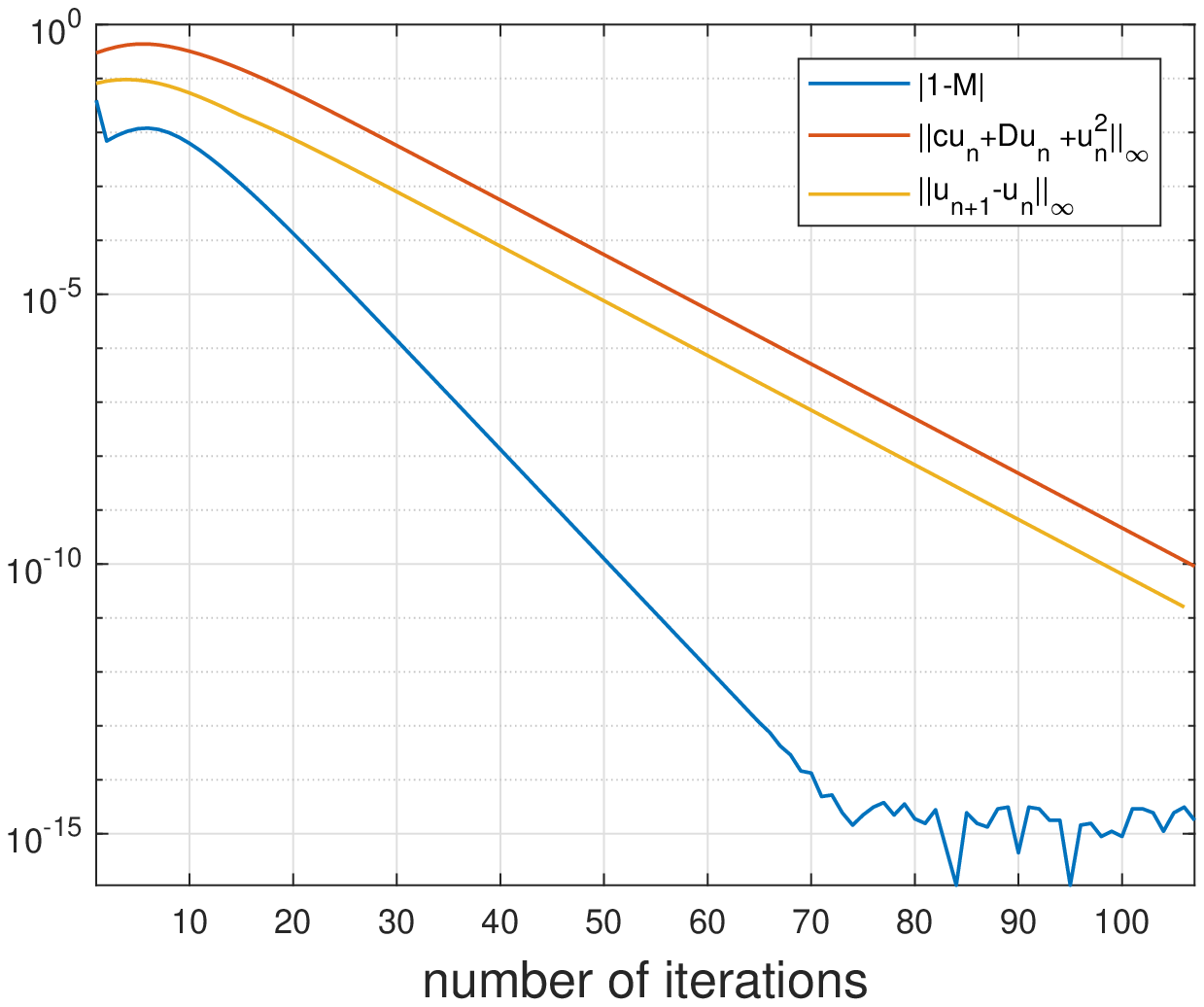}
	\end{subfigure}
		\caption{Iterations for $c = 1.6$ and $\alpha = 1$. (a) The last iteration versus $x$. (b) Computational errors versus $n$. 	}
	\label{fig:BO-shift}
\end{figure}

To demonstrate the convergence of the iterative method (\ref{T-new}), we use the initial condition
	\begin{equation*}
		u_{0}(x) = c + a\cos(x)
	\end{equation*}
with $a = 0.4$. This initial guess corresponds
to the first two terms of the Stokes expansion (\ref{wave-expansion}) for $\psi(x) = c + \phi(x)$.
We do not need to include the $\mathcal{O}(a^2)$ to the initial guess because
$\int_{-\pi}^{\pi} u_0^3 dx > 0$ and the denominator of the Petviashvili quotient (\ref{rule-new})
does not vanish at $u_0$.

Figure \ref{fig:Kdv-shift} shows the result of iterations for $c = 3$ and $\alpha = 2$.
It is seen that iterations converge quickly to a positive, single-lobe periodic wave
$\psi$ in agreement with Corollary \ref{corollary-modified}. Note that the iterative method (\ref{T})
diverges for $c = 3$ and $\alpha = 2$, as is seen from Figure \ref{fig:Kdv-c3}.
We can also compute the distance between the last iteration and the exact solution,
in which case we find $\|u-\phi\|_{L^{\infty}} \approx 1.3 \cdot 10^{-11}$.

Figure \ref{fig:BO-shift} reports similar results for $c = 1.6$ and $\alpha = 1$.
Again, the iterative method (\ref{T}) diverges for these values of $c$ and $\alpha$, as is
seen from Figure \ref{fig:BO-c153}.  We can also compute the distance between the last iteration and the exact solution,
in which case we find $\|u-\phi\|_{L^{\infty}} \approx 5.9 \cdot 10^{-11}$.

\vspace{0.25cm}

{\bf Acknowledgements:} DEP thanks D. Clamond for posing a problem on the lack of convergence of Petviashvili's method
for periodic waves in the KdV equation. The authors also thank H. Chen, A. Dur\'{a}n, M. Johnson, and P. Torres
for discussion of various technical aspects of this manuscript.

\end{document}